\newcommand{\Longlra}{\ensuremath{\Longleftrightarrow}}
\newcommand{\longra}{\ensuremath{\longrightarrow}}
\newcommand{\B}{\varmathbb{B}}
\newcommand{\C}{\varmathbb{C}}
\newcommand{\R}{\varmathbb{R}}
\newcommand{\M}{\varmathbb{M}}
\newcommand{\N}{\varmathbb{N}}
\newcommand{\F}{\varmathbb{F}}
\newcommand{\E}{\varmathbb{E}}
\newcommand{\norm}[1]{||#1||}
\DeclareMathAlphabet{\mathpzc}{OT1}{pzc}{m}{it}
\DeclareMathOperator{\supp}{supp}
\theoremstyle{plain}
\newtheorem{thm}{Theorem}[section]
\newtheorem{lemma}[thm]{Lemma}
\newtheorem{prop}[thm]{Proposition}
\newtheorem{cor}[thm]{Corollary}
\theoremstyle{definition}
\newtheorem{ex}[thm]{Example}
\theoremstyle{remark}
\newtheorem{remark}[thm]{Remark}
\newcommand{\tr}{\text{Tr}}
\newcommand{\ext}{{\rm ext}}
\newcommand{\Dir}{{\rm Dir}}
\newcommand{\dom}{\mathscr{O}}
\begin{document}

\title[Second Order Parabolic Problems subject Dirichlet Boundary Conditions]{Second Order Operators Subject to Dirichlet Boundary Conditions in Weighted Triebel-Lizorkin Spaces: Parabolic Problems}

\author{Nick Lindemulder}
\address{Institute of Analysis \\
Karlsruhe Institute of Technology \\
Englerstra\ss e 2 \\
76131 Karlsruhe\\
Germany}
\email{nick.lindemulder@kit.edu}

\subjclass[2010]{Primary: 35K52, 46E35; Secondary: 35B65, 46E40, 47D06}
\keywords{anisotropic, inhomogeneous Dirichlet boundary condition, maximal regularity, mixed-norms, parabolic problems of 2nd order, smoothing, trace space, Triebel-Lizorkin space, weights}

\thanks{The author was supported by the Vidi subsidy 639.032.427 of the Netherlands Organisation for Scientific Research (NWO)}

\date{\today}

\begin{abstract}
In this paper we consider second order parabolic partial differential equations subject to the Dirichlet boundary condition on smooth domains. We establish weighted $L_{q}$-maximal regularity in weighted Triebel-Lizorkin spaces for such parabolic problems with inhomogeneous boundary data.
The weights that we consider are power weights in time and space, and yield flexibility in the optimal regularity of the initial-boundary data, allow to avoid compatibility conditions at the boundary and provide a smoothing effect. In particular, we can treat rough inhomogeneous boundary data.
\end{abstract}

\maketitle

\section{Introduction}

This paper is a continuation of \cite{Lindemulder2018_elliptic_weighted_B&F}, where the boundedness of the $H^{\infty}$-calculus for the Dirichlet realizations of second order elliptic operators in weighted Triebel-Lizorkin spaces was established.
Since the work \cite{Kalton&Weis2001_sums_of_closed_operators} the $H^\infty$-calculus has become an extremely powerful tool in the theory of PDEs (see the monographs \cite{Denk&Kaip2013,Pruess&Simonett2016_book} and references therein).
For this paper the interest lies in the fact that the $H^{\infty}$-calculus can be used to obtain $L_{q}$-maximal regularity.
But the $H^{\infty}$-calculus can also be used for many other things (see \cite{Hytonen&Neerven&Veraar&Weis2016_Analyis_in_Banach_Spaces_II,Weis2006_survey}).

Maximal regularity is an important tool in the theory of nonlinear PDEs.
Maximal regularity means that there is an isomorphism between the data and the solution of the problem in suitable function spaces.
Having established maximal regularity for the linearized problem, the nonlinear problem can be treated with tools as the contraction principle and the implicit function theorem.
Let us mention \cite{Angenent1990,Clement&Simonett2001} for approaches in spaces of continuous functions,
\cite{Acquistapace&Terreni,Lunardi_book1995} for approaches in H\"older spaces and
\cite{Amann1995_Lin_and_quasilinear_parabolic,Amann_Maximal_regularity_and_quasilinear_pbvps_problems,Clement_Li,
Clement&Pruss1992,EPS,Pruss2002,Pruess&Simonett2016_book} for approaches in $L_{p}$-spaces (with $p \in (1,\infty)$).

In order to describe the specific maximal regularity result that is obtained in this paper, let us consider the heat equation
\begin{equation}\label{DSOP:eq:heat_eq}
\left\{\begin{array}{rll}
\partial_{t}u - \Delta u &= f &\quad\text{on}\quad J \times \mathscr{O},\\
u_{|\partial\mathscr{O}} &= g &\quad\text{on}\quad J \times \partial\mathscr{O},\\
u(0) &= u_{0} &\quad\text{on}\quad \mathscr{O},
\end{array}\right.
\end{equation}
where $J=(0,T)$ with $T \in (0,\infty]$ and where $\mathscr{O}$ is a smooth domain in $\R^{d}$ with a compact boundary $\partial\mathscr{O}$.

Let $q \in (1,\infty)$, $\mu \in (-1,q-1)$ and set $v_{\mu}(t):=t^{\mu}$. Let $\E \subset \mathcal{D}'(\mathscr{O})$ be a Banach space of distributions on $\mathscr{O}$ such that there exists a notion of trace on the associated second order space $\E^{2} = \{ u \in  \mathcal{D}(\mathscr{O}) : D^{\alpha}u \in \E, |\alpha| \leq 2 \}$ that is described by a bounded linear operator $\mathrm{Tr}_{\partial\mathscr{O}}:\E^{2} \longra \F$ for some suitable Banach space.
The following choices for $\E$ are of interest for this paper:
\begin{enumerate}[(i)]
\item\label{DSOP:it:intro:case1} $\E = L_{p}(\mathscr{O},w_{\gamma}^{\partial\mathscr{O}})$ with $p \in (1,\infty)$ and $\gamma \in (-1,2p-1)$,
\item\label{DSOP:it:intro:case2} $\E = F^{s}_{p,r}(\mathscr{O},w_{\gamma}^{\partial\mathscr{O}})$ with $p,r \in (1,\infty)$, $\gamma \in (-1,\infty)$ and $s \in (\frac{1+\gamma}{p}-2,\frac{1+\gamma}{p})$,
\end{enumerate}
where $w_{\gamma}^{\partial\mathscr{O}}=\mathrm{dist}(x,\partial\mathscr{O})^{\gamma}$; in both cases one can take $\F=L_{p}(\partial\mathscr{O})$. For these choices we speak of $L_{q,\mu}$-$L_{p,\gamma}$-maximal regularity and $L_{q,\mu}$-$F^{s}_{p,r,\gamma}$-maximal regularity, respectively.
There actually is an overlap between case \eqref{DSOP:it:intro:case1} and \eqref{DSOP:it:intro:case2}, see \eqref{DSOP:eq:intro:case1&2} below.

In the $L_{q,\mu}$-$\E$-maximal regularity approach to \eqref{DSOP:eq:heat_eq} one is looking for solutions $u$ in the \emph{maximal regularity space}
\begin{equation}\label{DSOP:eq:max-reg_space}
W^{1}_{q}(J,v_{\mu};\E) \cap L_{q}(J,v_{\mu};\E^{2}),
\end{equation}
where the boundary condition $u_{|\partial\mathscr{O}} = g$ has to be interpreted as $\mathrm{Tr}_{\partial\mathscr{O}}u=g$.
The problem \eqref{DSOP:eq:heat_eq} is said to enjoy the property of \emph{maximal $L_{q,\mu}$-$\E$-regularity} if there exists a (necessarily unique) space of initial-boundary data $\mathscr{D}_{i.b.} \subset L_{q}(J,v_{\mu};\F) \times \E$ such that for every $f \in L_{q}(J,v_{\mu};\E)$ it holds that \eqref{DSOP:eq:heat_eq} has a unique solution $u$ in \eqref{DSOP:eq:max-reg_space}
if and only if $(g,u_{0}) \in \mathscr{D}_{i.b.}$.
In this situation there exists a Banach norm on $\mathscr{D}_{i.b.}$, unique up to equivalence, with
\[
\mathscr{D}_{i.b.} \hookrightarrow L_{q}(J,v_{\mu};\F) \oplus \E,
\]
which makes the associated solution operator a topological linear isomorphism between the data space $L_{q}(J,v_{\mu};\E)) \oplus \mathscr{D}_{i.b.}$ and the solution space $W^{1}_{q}(J,v_{\mu};\E) \cap L_{q}(J,v_{\mu};\E^{2})$.
The \emph{maximal $L_{q,\mu}$-$\E$-regularity problem} for \eqref{DSOP:eq:heat_eq} consists of establishing maximal $L_{q,\mu}$-$\E$-regularity
for \eqref{DSOP:eq:heat_eq} and explicitly determining the space $\mathscr{D}_{i.b.}$.

The $L_{q,\mu}$-$L_{p,\gamma}$-maximal regularity problem for \eqref{DSOP:eq:heat_eq} was recently solved in \cite{LV2018_Dir_Laplace}. In the special case $f=0$, $u_{0}=0$ and $q=p$, $\mu=0$, one gets the following result.

\begin{thm}(\cite[Theorem~1.2]{LV2018_Dir_Laplace})\label{DSOP:thm:LV2018_Dir_Laplace}
Let $\mathscr{O}$ be a bounded $C^{2}$-domain in $\R^{d}$ and let $J=(0,T)$ with $T \in (0,\infty]$.
Let $p \in (1,\infty)$, $\gamma \in (-1,2p-1) \setminus \{p-1,2p-3\})$ and set $\delta = 1-\frac{1+\gamma}{2p} \neq \frac{1}{p}$. Then \eqref{DSOP:eq:heat_eq} enjoys the property of $L_{p}$-$L_{p,\gamma}$-maximal regularity and $u \mapsto u_{|\partial\mathscr{O}}$ (in the sense of traces) defines an isomorphism from
\[
\left\{ u \in W^{1}_{p}(J;L_p(\dom,w_{\gamma}^{\partial\dom}))\cap L_{p}(J;W^{2,p}(\dom,w_{\gamma}^{\partial\dom})) : \partial_{t}u-\Delta u = 0, u(0)=0 \right\}
\]
to
\[
{_{0}}B^{\delta}_{p,p}(J;L_{p}(\partial\dom)) \cap
L_{p}(J;B^{2\delta}_{p,p}(\partial\dom)),
\]
where
\[
{_{0}}B^{\delta}_{p,p}(J;X) = \left\{\begin{array}{ll}
B^{\delta}_{p,p}(J;X),& 0< \delta < \frac{1}{p},\\
\{ v \in B^{\delta}_{p,p}(J;X) : v(0)=0 \},& \frac{1}{p} < \delta < 1.\\
\end{array}\right.
\]
\end{thm}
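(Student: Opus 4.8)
The plan is to treat \eqref{DSOP:eq:heat_eq} with $f=0$ and $u_{0}=0$ as a pure boundary-data problem and to show that the boundary-to-solution map is an isomorphism onto the asserted anisotropic Besov space. Write
$\mathsf{E} := W^{1}_{p}(J;L_p(\dom,w_{\gamma}^{\partial\dom}))\cap L_{p}(J;W^{2,p}(\dom,w_{\gamma}^{\partial\dom}))$
for the maximal regularity space and $\mathsf{G} := {_{0}}B^{\delta}_{p,p}(J;L_{p}(\partial\dom)) \cap L_{p}(J;B^{2\delta}_{p,p}(\partial\dom))$ for the target space. Since $\dom$ is a bounded $C^{2}$-domain, I would first reduce to a model problem on the half-space $\R^{d}_{+}$ carrying the weight $x_{d}^{\gamma}$: a finite partition of unity subordinate to boundary charts that flatten $\partial\dom$ turns $\partial_t-\Delta$ into a perturbed half-space operator, the lower-order and commutator terms being absorbed by the already-available maximal regularity for the homogeneous ($g=0$) problem. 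The latter is exactly where the $H^{\infty}$-calculus of the Dirichlet realization $\DD$ from \cite{Lindemulder2018_elliptic_weighted_B&F} enters, via the operator-sum method of \cite{Kalton&Weis2001_sums_of_closed_operators}.

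The heart of the matter is the identification of the trace space. First I would establish the weighted spatial trace theorem $\mathrm{Tr}_{\partial\dom}\colon W^{2,p}(\dom,w_{\gamma}^{\partial\dom}) \to B^{2\delta}_{p,p}(\partial\dom)$ with $2\delta = 2-\tfrac{1+\gamma}{p}$, the loss $\tfrac{1+\gamma}{p}$ being the familiar cost of a spatial trace in a $\gamma$-weighted space. Applying this pointwise in $t$ and combining it with the mixed-derivative embedding of $\mathsf{E}$---which interpolates one time-derivative against two space-derivatives---yields that $u\mapsto u_{|\partial\dom}$ maps $\mathsf{E}$ boundedly into $B^{\delta}_{p,p}(J;L_{p}(\partial\dom)) \cap L_{p}(J;B^{2\delta}_{p,p}(\partial\dom))$. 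The factor $2$ relating the temporal order $\delta$ to the spatial order $2\delta$ is dictated by the parabolic scaling $t\sim|x|^{2}$, and is what makes the value $\delta=1-\tfrac{1+\gamma}{2p}$ appear.

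Next I would account for the initial condition and the subscript $0$. When $\delta>\tfrac{1}{p}$ the temporal trace at $t=0$ of an element of $B^{\delta}_{p,p}(J;L_{p}(\partial\dom))$ exists, so $u(0)=0$ forces $u_{|\partial\dom}(0)=0$ and the target is the closed subspace ${_{0}}B^{\delta}_{p,p}$; when $\delta<\tfrac{1}{p}$ no temporal trace exists and no compatibility is imposed. The borderline $\delta=\tfrac{1}{p}$, i.e.\ $\gamma=2p-3$, is genuinely exceptional and must be excluded. For surjectivity I would construct a bounded right inverse to the trace: on the half-space an explicit Poisson-type solution operator $g\mapsto u$ solving $\partial_t u-\Delta u=0$ with datum $g$ (built by Fourier transform in $x'$ together with the heat semigroup in $t$) sends $\mathsf{G}$ into $\mathsf{E}$ with vanishing initial value; transferring back through the partition of unity and correcting the resulting interior forcing by the homogeneous maximal regularity produces a genuine solution operator. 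Together with injectivity---two solutions with the same boundary data differ by a solution of the fully homogeneous problem, hence coincide by uniqueness---this shows that $u\mapsto u_{|\partial\dom}$ is an isomorphism onto $\mathsf{G}$.

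The step I expect to be the main obstacle is the sharp two-sided mapping property of the Poisson operator in the weighted anisotropic setting: proving both that it lands in $\mathsf{E}$ and that its range is exactly $\mathsf{G}$, with constants controlled up to the excluded exponents. The analysis is most delicate near $\gamma=p-1$, where $w_{\gamma}^{\partial\dom}$ leaves the Muckenhoupt class $A_{p}$ and the spatial trace order $2\delta$ equals $1$, and near $\gamma=2p-3$, where $\delta=\tfrac{1}{p}$; matching the weighted kernel bounds to the real-interpolation description of $\mathsf{G}$ at these two thresholds is the technical core of the argument.
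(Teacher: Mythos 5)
This statement is not proved in the paper at all: it is quoted verbatim from \cite[Theorem~1.2]{LV2018_Dir_Laplace} and used as a black box, so there is no internal proof to compare your attempt against. Judging your proposal on its own merits, the high-level architecture (localize and rectify to the half-space, settle the homogeneous problem by the $H^{\infty}$-calculus of the Dirichlet realization, identify the anisotropic trace space, build a Poisson-type coretraction, handle the temporal trace at $t=0$ according to whether $\delta$ exceeds $\frac{1}{p}$) is indeed the standard scheme, and it is essentially the scheme this paper follows for its own Triebel--Lizorkin analogue in Sections~\ref{DSOP:sec:trace} and~\ref{DSOP:sec:max-reg}.

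The genuine gap is that your sketch treats the problem as a routine localization-plus-multiplier argument that merely becomes ``delicate near $\gamma=p-1$.'' In fact the entire range $\gamma\in(p-1,2p-1)$ lies outside the Muckenhoupt class $A_{p}$ (see \eqref{Boutet:eq:intro:Ap-cond_power-weight}), and this range is precisely the content of the theorem --- the introduction states explicitly that the contribution of \cite{LV2018_Dir_Laplace} is the treatment of $\gamma\in(p-1,2p-1)$ and that the main difficulty there is that Mikhlin multiplier theorems and Littlewood--Paley decompositions are simply unavailable in $L_{p}(\dom,w_{\gamma}^{\partial\dom})$ for such $\gamma$. Consequently the three pillars of your argument all fail as stated in that range: the identification $L_{p}(\dom,w_{\gamma}^{\partial\dom})=F^{0}_{p,2}(\dom,w_{\gamma}^{\partial\dom})$ of \eqref{DSOP:eq:intro:case1&2} breaks down, so the spatial trace theory for $W^{2,p}(\dom,w_{\gamma}^{\partial\dom})$ cannot be borrowed from the Besov--Triebel--Lizorkin trace machinery; the $H^{\infty}$-calculus of the Dirichlet Laplacian on $L_{p}(\dom,w_{\gamma}^{\partial\dom})$ for non-$A_{p}$ $\gamma$ is itself one of the main results of \cite{LV2018_Dir_Laplace} and cannot be imported from \cite{Lindemulder2018_elliptic_weighted_B&F}, which concerns the weighted $B$- and $F$-scales; and the two-sided mapping properties of the half-space Poisson operator must be obtained by direct kernel and square-function estimates rather than by Fourier multiplier theorems. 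Filling these in is not a technical refinement of your outline but the substance of the cited paper. (The present paper's way around exactly this obstruction is to work in $F^{s}_{p,r}(\dom,w_{\gamma}^{\partial\dom})$, where the harmonic-analytic toolbox survives for all $\gamma>-1$, and then to recover $L_{p,\gamma}$-smoothing a posteriori as in Corollary~\ref{DSOP:cor:thm:main_intro;smoothing_W-scale}.)
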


Note that $\delta \in (0,1)$ can be taken arbitrarily close to $0$ by taking $\gamma$ arbitrarily close to $2p-1$, whereas $\delta \in (\frac{1}{2},1)$ for $\gamma$ in the $A_{p}$-range $(-1,p-1)$.

For $q \neq p$ the Besov space $B^{\delta}_{p,p}$ (in time) needs to be replaced by the Triebel-Lizorkin space $F^{\delta}_{q,p}$; moreover, the condition $\gamma \neq 2p-3$ turns into $\gamma \neq 2p-\frac{2p}{q}-1$ and the critical value $\frac{1}{p}$ turns into $\frac{1}{q}$ in the corresponding definition for ${_{0}}F^{\delta}_{q,p}$.

Earlier works on the (weighted) $L_{q}$-$L_{p}$-maximal regularity problem for parabolic initial-boundary value problems with inhomogeneous date include \cite{DHP2,Lindemulder2017_max-reg,LSU,MeySchnau2,Weidemaier_intro}, where \cite{LSU} ($q=p$, $\mu=\gamma=0$) and \cite{Weidemaier_intro} ($p \leq q$, $\mu=\gamma=0$) are on scalar-valued 2nd order problems with Dirichlet and Neumann boundary conditions and where \cite{DHP2} ($\mu=\gamma=0$), \cite{MeySchnau2} ($q=p$, $\mu \in [0,q-1)$, $\gamma=0$) and \cite{Lindemulder2017_max-reg} ($\mu \in (-1,q-1)$, $\gamma \in (-1,p-1)$) are on higher order operators with Lopatinskii-Shapiro boundary conditions in a Banach space-valued stetting (generalizing finite dimensional systems).

The contribution of Theorem~\ref{DSOP:thm:LV2018_Dir_Laplace} is the treatment of the case $\gamma \in (p-1,2p-1)$.
The restriction $\gamma \in (-1,p-1)$  for the spatial weight $w^{\partial\mathscr{O}}_{\gamma}$ in \cite{Lindemulder2017_max-reg} is a restriction of harmonic analytic nature. Indeed, $(-1,p-1)$ is the Muckenhoupt $A_{p}$-range for $w^{\partial\mathscr{O}}_{\gamma}$: given $p \in (1,\infty)$ and $\gamma \in \R$, it holds that
\begin{equation}\label{Boutet:eq:intro:Ap-cond_power-weight}
w_{\gamma}^{\partial\mathscr{O}} = \mathrm{dist}(\,\cdot\,,\partial\mathscr{O})^{\gamma} \in A_{p}(\R^{n}) \quad \Longlra \quad \gamma \in (-1,p-1).
\end{equation}
The Muckenhoupt class $A_{p}(\R^{n})$ ($p \in (1,\infty)$) is a class of weights for which many harmonic analytic tools from the unweighted setting, such as Mikhlin Fourier multiplier theorems and Littlewood-Paley decompositions, remain valid for the corresponding weighted $L_{p}$-spaces.
For example, the Littlewood-Paley decomposition for $L_{p}(\R^{n},w)$ with $w \in A_{p}(\R^{n})$ and its variant for $W^{k}_{p}(\R^{n},w)$, $k \in \N$, can be formulated by means of Triebel-Lizorkin spaces as
\[
L_{p}(\R^{n},w)= F^{0}_{p,2}(\R^{n},w), \qquad W^{k}_{p}(\R^{n},w)= F^{k}_{p,2}(\R^{n},w),
\]
As a consequence of this Littlewood-Paley decomposition and \eqref{Boutet:eq:intro:Ap-cond_power-weight},
\begin{equation}\label{DSOP:eq:intro:case1&2}
L_{p}(\mathscr{O},w_{\gamma}^{\partial\mathscr{O}}) = F^{0}_{p,2}(\mathscr{O},w_{\gamma}^{\partial\mathscr{O}}), \qquad \gamma \in (-1,p-1).
\end{equation}
The main difficulty in \cite{LV2018_Dir_Laplace} in the non-$A_p$ range $(p-1,2p-1)$ is that these standard tools are no longer available.

One way to avoid these difficulties is to work in the weighted Triebel-Lizorkin spaces $\E = \E=F^{s}_{p,r}(\mathscr{O},w_{\gamma}^{\partial\mathscr{O}})$ instead of $\E = L_{p}(\mathscr{O},w_{\gamma}^{\partial\mathscr{O}})$. The advantage of the scale of weighted Triebel-Lizorkin spaces is the strong harmonic analytic nature of these function spaces, leading to the availability of many powerful tools (see e.g.\ \cite{Bui&Paluszynski&Taibleson1996,Bui1982,Bui1994,Haroske&Piotrowska2008,Haroske&Skrzypczak2008_EntropyI,
Haroske&Skrzypczak2011_EntropyII,Haroske&Skrzypczak2011_EntropyIII,Lindemulder2018_elliptic_weighted_B&F,
Meyries&Veraar2014_char_class_embeddings,
Meyries&Veraar2012_sharp_embedding,Meyries&Veraar2014_traces,Sickel&Skrzypczak&Vybiral2014}). In particular, there is a Mikhlin-H\"ormander Fourier multiplier theorem.
That Mikhlin-H\"ormander Fourier multiplier theorem which was used in \cite{Lindemulder2018_elliptic_weighted_B&F} on the boundedness of the $H^{\infty}$-calculus for the Dirichlet realizations of second order elliptic operators in weighted Triebel-Lizorkin spaces, serving as a basis for the present paper.

The main result of this paper, Theorem~\ref{DSOP:thm:main;half-line}, provides a solution to the $L_{q,\mu}$-$F^{s}_{p,r,\gamma}$-maximal regularity problem for second order parabolic problems subject to inhomogeneous Dirichlet boundary conditions.
For purposes of exposition, without loosing the main innovative part of the present paper, let us state Theorem~\ref{DSOP:thm:main;half-line} (also see Remark~\ref{DSOP:rmk:thm:main;half-line;lambda0}) for the special case of the heat equation \eqref{DSOP:eq:heat_eq} with $f=0$, $u_{0}=0$ and $q=p$, $\mu=0$.

\begin{thm}\label{DSOP:thm:main_intro}
Let $\mathscr{O}$ be a $C^{\infty}$-domain in $\R^{d}$ with a compact boundary $\partial\dom$ and let $J=(0,T)$ with $T \in (0,\infty]$.
Let $p,r \in (1,\infty)$, $\gamma \in (-1,\infty)$, $s \in (\frac{1+\gamma}{p}-2,\frac{1+\gamma}{p})$ and set $\delta = \frac{s}{2}+1-\frac{1+\gamma}{2p}$. If $\delta \neq \frac{1}{p}$, then \eqref{DSOP:eq:heat_eq} enjoys the property of $L_{p}$-$F^{s}_{p,r,\gamma}$-maximal regularity and $u \mapsto u_{|\partial\mathscr{O}}$ (in the sense of traces) defines an isomorphism from
\[
\left\{ u \in W^{1}_{p}(J;F^{s}_{p,r}(\dom,w_{\gamma}^{\partial\dom}))\cap L_{p}(J;F^{s+2}_{p,r}(\dom,w_{\gamma}^{\partial\dom})) : \partial_{t}u-\Delta u = 0, u(0)=0 \right\}
\]
to
\begin{equation}\label{DSOP:eq:thm:main_intro;bd_space}
{_{0}}B^{\delta}_{p,p}(J;L_{p}(\partial\dom)) \cap
L_{p}(J;B^{2\delta}_{p,p}(\partial\dom)).
\end{equation}
\end{thm}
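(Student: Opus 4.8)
The plan is to reduce, by \textbf{localization}, to an explicit model problem on a half-space and then to read off the boundary data space from a trace theorem for an anisotropic weighted Triebel--Lizorkin space. Choosing a finite atlas of $C^{\infty}$ boundary charts for the compact boundary $\partial\dom$ together with a subordinate partition of unity, the straightening diffeomorphisms turn the distance weight $w_{\gamma}^{\partial\dom}$ into a weight equivalent to $x_d^{\gamma}$ on the half-space $\R^{d}_{+}=\R^{d-1}\times\R_{+}$ and the Laplacian into a second order operator with smooth coefficients that is a lower-order perturbation of $\Delta$. What makes this step legitimate outside the $A_p$-range is that the weighted Triebel--Lizorkin scale is stable under multiplication by smooth cut-offs and under $C^{\infty}$-diffeomorphisms for all $\gamma\in(-1,\infty)$, by the pointwise multiplier and diffeomorphism results underlying \cite{Lindemulder2018_elliptic_weighted_B&F}. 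After freezing coefficients and absorbing the lower-order terms by a Neumann series and perturbation argument over the finite cover, it suffices to treat the constant-coefficient model problem $\partial_{t}u-\Delta u=0$ on $J\times\R^{d}_{+}$ with $u_{|x_d=0}=g$ and $u(0)=0$.

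Next I would \textbf{solve the model problem explicitly}. Applying the Fourier transform in the tangential variable $x'\in\R^{d-1}$ and in the extended time variable reduces $\partial_{t}u-\Delta u=0$, $u_{|x_d=0}=g$, $u(0)=0$ to a family of boundary value problems for ODEs in $x_d$, whose decaying solutions are given by a Poisson-type kernel and assemble into a solution operator $g\mapsto u$. The crucial point is that this operator maps the claimed boundary space boundedly into the maximal regularity space $W^{1}_{p}(J;F^{s}_{p,r}(\R^{d}_{+},x_d^{\gamma}))\cap L_{p}(J;F^{s+2}_{p,r}(\R^{d}_{+},x_d^{\gamma}))$; I would prove this with the Mikhlin--H\"ormander Fourier multiplier theorem for weighted Triebel--Lizorkin spaces, verifying that the symbol of the solution operator obeys the required anisotropic (parabolic) multiplier estimates. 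Together with the converse estimate that $\mathrm{Tr}_{x_d=0}$ is bounded on the maximal regularity space, and with uniqueness of solutions carrying vanishing data (which gives injectivity), this makes $\mathrm{Tr}_{x_d=0}$ an isomorphism for the model problem.

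The \textbf{identification of the boundary space} is the heart of the matter and where I expect the main difficulty. It rests on a trace theorem for the anisotropic, weighted Triebel--Lizorkin structure underlying the maximal regularity space on $J\times\R^{d-1}\times\R_{+}$, where the parabolic scaling couples the time and space directions by assigning the time variable anisotropy weight $2$. The normal smoothness governing the trace at $x_d=0$ is $s+2$, and the power weight $x_d^{\gamma}$ makes the weighted trace cost exactly $\tfrac{1+\gamma}{p}$, producing tangential smoothness $s+2-\tfrac{1+\gamma}{p}=2\delta$ on $\partial\dom$; the parabolic scaling then forces the matching temporal smoothness to be half of this, namely $\delta$. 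Since $q=p$ the microscopic indices coincide, so the temporal component is a Besov rather than a genuine Triebel--Lizorkin space, $F^{\delta}_{p,p}=B^{\delta}_{p,p}$, and the boundary space comes out as the intersection $B^{\delta}_{p,p}(J;L_{p}(\partial\dom))\cap L_{p}(J;B^{2\delta}_{p,p}(\partial\dom))$. The hypotheses $s\in(\tfrac{1+\gamma}{p}-2,\tfrac{1+\gamma}{p})$ guarantee $2\delta\in(0,2)$, hence $\delta\in(0,1)$, so that both the tangential and the temporal traces are nondegenerate.

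Finally I would \textbf{incorporate the initial condition and transfer back}. The constraint $u(0)=0$ translates, through the temporal trace theory, into a condition on $g$ precisely when the temporal smoothness exceeds the critical value $\tfrac1p$: for $\delta>\tfrac1p$ the temporal trace $g(0)$ exists in $L_{p}(\partial\dom)$ and must vanish, which is exactly the defining condition of ${_{0}}B^{\delta}_{p,p}$, whereas for $\delta<\tfrac1p$ no temporal trace is available and no compatibility is imposed. The borderline value $\delta=\tfrac1p$ is excluded because there the temporal trace would land in a critical space and the clean isomorphism breaks down. Patching the half-space isomorphisms together with the partition of unity, and using once more the localization stability of the function spaces, then yields the asserted isomorphism on $\dom$.
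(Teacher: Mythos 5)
Your proposal follows the classical Denk--Hieber--Pr\"uss route: localize to the half-space, solve the constant-coefficient model problem by an explicit Poisson-type formula, and establish the mapping properties of that solution operator by Fourier multiplier estimates. The paper takes a genuinely different path and never writes down the model solution operator. Theorem~\ref{DSOP:thm:main_intro} is a special case of Theorem~\ref{DSOP:thm:main;half-line}, which is deduced from Theorem~\ref{DSOP:thm:main;real_line} as follows: the Dirichlet realization $A$ of the elliptic operator in $F^{s}_{p,r,\gamma}(\dom)$ has a bounded $H^{\infty}$-calculus of angle $<\pi/2$ by the companion paper \cite{Lindemulder2018_elliptic_weighted_B&F} (used as a black box); together with the UMD property of $F^{s}_{p,r,\gamma}(\dom)$ and the Dore--Venni theorem this gives maximal regularity for \emph{homogeneous} boundary data, and the inhomogeneous datum $g$ is then absorbed by subtracting $\ext_{\partial\dom}g$, where $\ext_{\partial\dom}$ is the coretraction supplied by the trace theorem (Theorem~\ref{DSOP:thm:trace_MR}). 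The identification of the boundary space happens entirely inside that trace theorem, whose proof sandwiches the maximal regularity space between anisotropic mixed-norm Triebel--Lizorkin spaces with \emph{shifted} weight exponents (Lemmas~\ref{DSOP:lemma:embd_anisotropic_MR} and~\ref{DSOP:lemma:embd_MR_anisotropic}), exploiting the Sobolev embedding \eqref{DSOP:eq:prelim:Sob_embd} to move into the $A_{p}$-range where the trace and extension theory of \cite{Lindemulder2017_max-reg} applies. What the paper's route buys is precisely that one never estimates an explicit kernel against the weight outside the $A_{p}$-range.

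That is also where your sketch has a genuine gap. The model solution operator acts on the normal variable $x_{d}$ by an explicit integral against a Poisson-type kernel, not by Fourier multiplication in $x_{d}$; the weight $x_{d}^{\gamma}$ lives in exactly that variable and for $\gamma\notin(-1,p-1)$ is not an $A_{p}$-weight, so the Mikhlin--H\"ormander theorem for weighted Triebel--Lizorkin spaces cannot be invoked in the form you describe to conclude that $g\mapsto u$ lands in $L_{p}(J;F^{s+2}_{p,r}(\R^{d}_{+},w_{\gamma}))$. Making that step rigorous is essentially the content of the lengthy kernel estimates of \cite{LV2018_Dir_Laplace} in the $L_{p,\gamma}$-setting, and it would have to be redone for the $F^{s}_{p,r,\gamma}$-scale; your plan treats it as routine. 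On the other hand, your heuristic count of the boundary regularity ($2\delta=s+2-\tfrac{1+\gamma}{p}$ tangentially, $\delta$ temporally, $F^{\delta}_{p,p}=B^{\delta}_{p,p}$ since $q=p$, and the condition $g(0)=0$ exactly when $\delta>\tfrac1p$) agrees with the paper's answer and with Lemma~\ref{DSOP:lem:thm:main;half-line;temp_trace} and the definition of $\mathbb{IB}^{q,p,r}_{\mu,\gamma,s}(J)$.
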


Note that $\delta = \frac{s}{2}+1-\frac{1+\gamma}{2p}$ can take any value in $(0,1)$ by choosing $\gamma$ and $s$ appropriately. In fact, given $\delta \in (0,1)$ and $\gamma \in (-1,\infty)$, choosing $s=2\delta+\frac{1+\gamma}{p}-2 \in (\frac{1+\gamma}{p}-2,\frac{1+\gamma}{p})$ gives $\delta = \frac{s}{2}+1-\frac{1+\gamma}{2p}$.

Playing around with the weight parameter $\gamma$ and the smoothness parameter $s$, sharp Sobolev embedding for Triebel-Lizorkin spaces from \cite{Meyries&Veraar2012_sharp_embedding} and an elementary embedding of Triebel-Lizorkin spaces into Sobolev spaces yield the following self-improvement in the setting of the above theorem (see Corollary~\ref{DSOP:cor:thm:main;half-line;smoothing} and Remark~\ref{DSOP:rmk:cor:thm:main;half-line;smoothing}).

\begin{cor}\label{DSOP:cor:thm:main_intro;smoothing_F-scale}
Let the notation and assumptions be as in Theorem~\ref{DSOP:thm:main_intro}. Then
\begin{align}
&\left\{ u \in W^{1}_{p}(J;F^{s}_{p,r}(\dom,w_{\gamma}^{\partial\dom}))\cap L_{p}(J;F^{s+2}_{p,r}(\dom,w_{\gamma}^{\partial\dom})) : \partial_{t}u-\Delta u = 0, u(0)=0 \right\} \nonumber \\
&\qquad \hookrightarrow
\quad \bigcap_{\nu > -1}\left[ W^{1}_{p}(J;F^{s+\frac{\nu-\gamma}{p}}_{p,1}(\dom,w_{\nu}^{\partial\dom}))\cap L_{p}(J;F^{s+\frac{\nu-\gamma}{p}+2}_{p,1}(\dom,w_{\nu}^{\partial\dom})) \right] \label{DSOP:eq:cor:thm:main_intro;smoothing_F-scale;1} \\
&\qquad\qquad \hookrightarrow \quad \bigcap_{k \in \N}\left[ W^{1}_{p}(J;W^{k}_{p}(\dom,w_{\gamma+(k-s)p}^{\partial\dom}))\cap L_{p}(J;W^{k+2}_{p}(\dom,w_{\gamma+(k-s)p}^{\partial\dom})) \right]. \label{DSOP:eq:cor:thm:main_intro;smoothing_F-scale;2}
\end{align}
\end{cor}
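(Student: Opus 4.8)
The strategy is to exploit that the boundary data space \eqref{DSOP:eq:thm:main_intro;bd_space} depends on $(s,\gamma,r)$ \emph{only} through $\delta=\frac{s}{2}+1-\frac{1+\gamma}{2p}$, and to combine this with a uniqueness/scale-invariance argument and the sharp Sobolev embeddings of \cite{Meyries&Veraar2012_sharp_embedding}. Throughout, I would write $\mathcal{M}_{s,\gamma,r}$ for the maximal-regularity solution space on the left-hand side of \eqref{DSOP:eq:cor:thm:main_intro;smoothing_F-scale;1}, i.e.\ the set of $u\in W^{1}_{p}(J;F^{s}_{p,r}(\dom,w_{\gamma}^{\partial\dom}))\cap L_{p}(J;F^{s+2}_{p,r}(\dom,w_{\gamma}^{\partial\dom}))$ with $\partial_{t}u-\Delta u=0$ and $u(0)=0$. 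The basic bookkeeping is: replacing $(s,\gamma)$ by $(s_{\nu},\nu)$ with $s_{\nu}:=s+\frac{\nu-\gamma}{p}$ preserves the differential dimension, $s_{\nu}-\frac{1+\nu}{p}=s-\frac{1+\gamma}{p}$, whence $\frac{s_{\nu}}{2}+1-\frac{1+\nu}{2p}=\delta$ and $s_{\nu}\in(\frac{1+\nu}{p}-2,\frac{1+\nu}{p})$. Thus Theorem~\ref{DSOP:thm:main_intro} applies verbatim to $(s_{\nu},\nu,r)$ for every $\nu>-1$ and $\delta\neq\frac1p$ is inherited.

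The first step is to show $\mathcal{M}_{s,\gamma,r}=\mathcal{M}_{s_{\nu},\nu,r}$ with uniformly equivalent norms for all $\nu>-1$. By the previous observation the trace map is an isomorphism from each $\mathcal{M}_{s_{\nu},\nu,r}$ onto the \emph{same} space \eqref{DSOP:eq:thm:main_intro;bd_space} (which only sees $\delta$). The inverse of each of these isomorphisms is realized by one and the same solution operator $\mathcal{P}\colon g\mapsto u$ constructed in the proof of Theorem~\ref{DSOP:thm:main_intro}, which is built from fixed objects (the heat semigroup together with a Poisson-type map) and hence does not depend on the scale $(s_{\nu},\nu,r)$. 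Therefore, for $u\in\mathcal{M}_{s,\gamma,r}$ with $g:=u_{|\partial\dom}$, one has $u=\mathcal{P}g$, and boundedness of $\mathcal{P}$ from \eqref{DSOP:eq:thm:main_intro;bd_space} into $\mathcal{M}_{s_{\nu},\nu,r}$ yields the claimed equality and norm bound.

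Next I would improve the microscopic parameter from $r$ to $1$ by routing through an auxiliary weight. Fix a target $\nu>-1$ and pick any $\nu'\in(-1,\nu)$; by the previous step $\mathcal{M}_{s,\gamma,r}=\mathcal{M}_{s_{\nu'},\nu',r}$. Since $\nu'<\nu$ strictly while the differential dimension is preserved, the sharp Sobolev embedding for weighted Triebel–Lizorkin spaces from \cite{Meyries&Veraar2012_sharp_embedding}, which permits lowering the microscopic parameter to $1$ precisely when the weight exponent is strictly increased, gives $F^{s_{\nu'}}_{p,r}(\dom,w_{\nu'}^{\partial\dom})\hookrightarrow F^{s_{\nu}}_{p,1}(\dom,w_{\nu}^{\partial\dom})$ and likewise with $s_{\nu'},s_{\nu}$ replaced by $s_{\nu'}+2,s_{\nu}+2$. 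Lifting these bounded spatial embeddings to the $J$-Bochner spaces (and noting that $\partial_{t}u-\Delta u=0$ and $u(0)=0$ persist) gives $\mathcal{M}_{s_{\nu'},\nu',r}\hookrightarrow\mathcal{M}_{s_{\nu},\nu,1}$; intersecting over $\nu>-1$ proves \eqref{DSOP:eq:cor:thm:main_intro;smoothing_F-scale;1}. For \eqref{DSOP:eq:cor:thm:main_intro;smoothing_F-scale;2}, given $k\in\N$ I take $\nu=\gamma+(k-s)p$ in the intersection, which satisfies $\nu>-1$ and $s_{\nu}=k$; the elementary embedding $F^{m}_{p,1}(\dom,w)\hookrightarrow W^{m}_{p}(\dom,w)$ for $m\in\{k,k+2\}$ (valid for arbitrary power weights, since $F^{0}_{p,1}(w)\hookrightarrow L_{p}(w)$ holds by the triangle inequality without any $A_{p}$ assumption) applied in the spatial variable yields the $k$-th factor on the right of \eqref{DSOP:eq:cor:thm:main_intro;smoothing_F-scale;2}.

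The main obstacle is the identification in the first step: one must ensure the \emph{same} function $u$ is the solution in every scale $\mathcal{M}_{s_{\nu},\nu,r}$, i.e.\ that the inverses of the various trace isomorphisms coincide. This hinges on the solution operator $\mathcal{P}$ being scale-independent, which I would read off from its explicit construction in the proof of Theorem~\ref{DSOP:thm:main_intro}; alternatively one argues by uniqueness of the solution of the heat equation with prescribed boundary data and zero initial data within a common ambient distribution space into which all the $\mathcal{M}_{s_{\nu},\nu,r}$ embed. The remaining ingredients—the sharp embedding of \cite{Meyries&Veraar2012_sharp_embedding} and the Triebel–Lizorkin-to-Sobolev embedding—are quotable, and the passage to time-Bochner spaces is routine.
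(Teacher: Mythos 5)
Your argument is correct and follows essentially the same route as the paper: invariance of the boundary-data space under the shift $(s,\gamma)\mapsto(s+\frac{\nu-\gamma}{p},\nu)$ combined with the trace isomorphism of Theorem~\ref{DSOP:thm:main_intro} to pass between scales, then the sharp Sobolev embedding \eqref{DSOP:eq:prelim:Sob_embd} (via an auxiliary $\nu'<\nu$) to improve the microscopic parameter to $1$, and finally the elementary embedding \eqref{DSOP:eq:prelim:emb_F_into_Lp} with $\nu=\gamma+(k-s)p$. You are in fact more explicit than the paper on the one delicate point---that the solution with boundary datum $g$ is the \emph{same} function in every scale---and your uniqueness-in-a-common-ambient-space argument is the right way to justify it (the paper's ``as a consequence'' glosses over this, and literal scale-independence of the constructed solution operator is less transparent than uniqueness).
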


The crucial feature of the Sobolev embedding from \cite{Meyries&Veraar2012_sharp_embedding},
\begin{equation}\label{DSOP:eq:Sob_embd_intro}
F^{s_{0}}_{p,r_{0}}(\dom,w^{\partial\dom}_{\gamma_{0}}) \hookrightarrow F^{s_{1}}_{p,r_{1}}(\dom,w^{\partial\dom}_{\gamma_{1}}), \qquad \gamma_{1} > \gamma_{0}, s_{0} = s_{1}+\frac{\gamma_{0}-\gamma_{1}}{p},
\end{equation}
on which the self-improvement \eqref{DSOP:eq:cor:thm:main_intro;smoothing_F-scale;1} is based is the invariance of the sharp space of boundary data \eqref{DSOP:eq:thm:main_intro;bd_space} under this embedding:
\[
\delta_{s_{0},p,\gamma_{0}} = \frac{s_{0}}{2}+1-\frac{1+\gamma_{0}}{2p} = \frac{s_{1}}{2}+1-\frac{1+\gamma_{1}}{2p} = \delta_{s_{1},p,\gamma} \qquad \text{when}\quad
s_{0} = s_{1}+\frac{\gamma_{0}-\gamma_{1}}{p}.
\]

The embedding \eqref{DSOP:eq:cor:thm:main_intro;smoothing_F-scale;2} follows directly from the elementary embedding
\[
F^{k}_{p,1}(\dom,w^{\partial\dom}_{\nu})
\hookrightarrow W^{k}_{p}(\dom,w^{\partial\dom}_{\nu}), \qquad k \in \N, \nu > -1,
\]
by restriction to $\nu \in (-1,\infty)$ for which $s+\frac{\nu-\gamma}{p} = k \in \N$ (so that $\nu=\gamma+(k-s)p$).

The spaces \eqref{DSOP:eq:cor:thm:main_intro;smoothing_F-scale;1} and \eqref{DSOP:eq:cor:thm:main_intro;smoothing_F-scale;2} are spaces of function that are smooth in the spatial variable, which can be seen by Sobolev embedding.
The varying weight parameter $\gamma+(k-s)p$ in $k$ can be seen as a way to measure the allowed blow-up of the spatial derivatives. Introducing the function spaces
\[
\mathcal{W}^{\infty,\ell}_{p,\gamma}(\dom) := \left\{ v \in W^{\ell}_{p}(\dom,w^{\partial\dom}_{\gamma}) : D^{\alpha}v  \in L_{p}(\dom,w^{\partial\dom}_{\gamma +(|\alpha|-\ell)p}),\, |\alpha| > \ell  \right\} \hookrightarrow C^{\infty}(\dom),
\]
where $\ell \in \N$, $p \in (1,\infty)$ and $\gamma \in (-1,\infty)$, this feature is emphasized by the trivial observation that the space \eqref{DSOP:eq:cor:thm:main_intro;smoothing_F-scale;2} is continuously embedded into
\begin{equation}\label{DSOP:eq:cor:thm:main_intro;smoothing_F-scale;3}
W^{1}_{p}(J;\mathcal{W}^{\infty,0}_{p,\gamma-sp}(\dom)) \cap L_{p}(J;\mathcal{W}^{\infty,2}_{p,\gamma-sp}(\dom)),
\end{equation}
which can be viewed as the space of all $u \in C(\overline{J};C^{\infty}(\dom))$ with $\partial_{t}D^{\alpha}u \in L_{p}(J;L_{p}(\dom,w^{\partial\dom}_{\gamma-sp +|\alpha|p}))$ and $D^{\alpha}u \in L_{p}(J;L_{p}(\dom,w^{\partial\dom}_{\gamma-sp +\max\{|\alpha|-2,1\}p}))$ for each $\alpha \in \N^{d}$.
Now note that the contribution of $w_{\nu}^{\partial\dom}$ in the defining integral of the $L_{p}(\dom,w_{\nu}^{\partial\dom})$-norm gets less as $\nu$ increases, allowing functions in $L_{p}(\dom,w_{\nu}^{\partial\dom})$ to have more blow-up near the boundary $\partial\dom$ for bigger $\nu$. So we have $C^{\infty}$-smoothness in the space-variable with some quantified blow-up behaviour near the boundary $\partial\dom$ in terms of the derivatives.

Putting Theorem~\ref{DSOP:thm:main_intro} and Corollary~\ref{DSOP:cor:thm:main_intro;smoothing_F-scale} together with Theorem~\ref{DSOP:thm:LV2018_Dir_Laplace} we get a smoothing effect for that result as well:
\begin{cor}\label{DSOP:cor:thm:main_intro;smoothing_W-scale}
Let the notation and assumptions be as in Theorem~\ref{DSOP:thm:LV2018_Dir_Laplace}. Then
\begin{align*}
&\left\{ u \in W^{1}_{p}(J;L^p(\dom,w_{\gamma}^{\partial\dom}))\cap L_{p}(J;W^{2,p}(\dom,w_{\gamma}^{\partial\dom})) : \partial_{t}u-\Delta u = 0, u(0)=0 \right\} \\
&\qquad\qquad \hookrightarrow \quad \bigcap_{k \in \N}\left[ W^{1}_{p}(J;W^{k}_{p}(\dom,w_{\gamma+kp}^{\partial\dom}))\cap L_{p}(J;W^{k+2}_{p}(\dom,w_{\gamma+kp}^{\partial\dom})) \right].
\end{align*}
\end{cor}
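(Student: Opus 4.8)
The plan is to realize the solution space on the left-hand side as the $s=0$ instance of the solution space occurring in Theorem~\ref{DSOP:thm:main_intro}, and then to feed it into the $F$-scale smoothing of Corollary~\ref{DSOP:cor:thm:main_intro;smoothing_F-scale}. To apply Theorem~\ref{DSOP:thm:main_intro} I would use that $\mathscr{O}$ is $C^{\infty}$ (the standing assumption of the present paper); since a $C^{\infty}$-domain is in particular $C^{2}$, Theorem~\ref{DSOP:thm:LV2018_Dir_Laplace} applies simultaneously. First I would check that the parameters match under the choice $s=0$: for $\gamma \in (-1,2p-1)$ the value $s=0$ lies in the admissible interval $(\frac{1+\gamma}{p}-2,\frac{1+\gamma}{p})$ of Theorem~\ref{DSOP:thm:main_intro}, and the associated boundary smoothness $\frac{s}{2}+1-\frac{1+\gamma}{2p}$ reduces to $\delta = 1-\frac{1+\gamma}{2p}$, exactly the quantity of Theorem~\ref{DSOP:thm:LV2018_Dir_Laplace}. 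Moreover the target trace space \eqref{DSOP:eq:thm:main_intro;bd_space} is independent of the microscopic parameter $r$, so I am free to fix any $r \in (1,\infty)$.

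The crucial step is the identification of the two solution spaces. Write $\mathcal{S}_{\mathrm{L}}$ for the space on the left-hand side of the asserted embedding (the solution space of Theorem~\ref{DSOP:thm:LV2018_Dir_Laplace}) and $\mathcal{S}_{\mathrm{F}}$ for the solution space of Theorem~\ref{DSOP:thm:main_intro} with $s=0$ and the fixed $r$. Both theorems assert that $u \mapsto \mathrm{Tr}_{\partial\mathscr{O}}u$ is a topological isomorphism onto one and the same space \eqref{DSOP:eq:thm:main_intro;bd_space}. Given $u \in \mathcal{S}_{\mathrm{L}}$, I would set $g := \mathrm{Tr}_{\partial\mathscr{O}}u$ in \eqref{DSOP:eq:thm:main_intro;bd_space} and let $\tilde{u} \in \mathcal{S}_{\mathrm{F}}$ be the unique element with $\mathrm{Tr}_{\partial\mathscr{O}}\tilde{u}=g$ supplied by Theorem~\ref{DSOP:thm:main_intro}. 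Both $u$ and $\tilde{u}$ are distributional solutions of $\partial_{t}v-\Delta v=0$ on $J \times \mathscr{O}$ with $v(0)=0$ and $\mathrm{Tr}_{\partial\mathscr{O}}v=g$, so their difference $w:=u-\tilde{u}$ solves the homogeneous Dirichlet heat problem with vanishing boundary and initial data. By uniqueness for this problem $w=0$, whence $u=\tilde{u} \in \mathcal{S}_{\mathrm{F}}$. Tracking the norm estimates through the two isomorphisms gives $\| u \|_{\mathcal{S}_{\mathrm{F}}} \lesssim \| g \| \lesssim \| u \|_{\mathcal{S}_{\mathrm{L}}}$, i.e.\ a continuous embedding $\mathcal{S}_{\mathrm{L}} \hookrightarrow \mathcal{S}_{\mathrm{F}}$.

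Finally I would compose this embedding with Corollary~\ref{DSOP:cor:thm:main_intro;smoothing_F-scale} applied at $s=0$: its conclusion \eqref{DSOP:eq:cor:thm:main_intro;smoothing_F-scale;2} becomes $\mathcal{S}_{\mathrm{F}} \hookrightarrow \bigcap_{k \in \N}\bigl[W^{1}_{p}(J;W^{k}_{p}(\mathscr{O},w_{\gamma+kp}^{\partial\mathscr{O}})) \cap L_{p}(J;W^{k+2}_{p}(\mathscr{O},w_{\gamma+kp}^{\partial\mathscr{O}}))\bigr]$, since $\gamma+(k-s)p=\gamma+kp$ when $s=0$. This is precisely the asserted target, completing the proof.

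The main obstacle is the identification step $w=0$: a priori $u$ and $\tilde{u}$ live in the Sobolev scale and in the Triebel--Lizorkin scale respectively, which---outside the $A_{p}$-range $\gamma\in(-1,p-1)$, cf.\ \eqref{DSOP:eq:intro:case1&2}---are genuinely distinct. To run the uniqueness argument cleanly one must place both solutions in a common distributional framework: since $L_{p}(\mathscr{O},w_{\gamma}^{\partial\mathscr{O}})$ and $F^{0}_{p,r}(\mathscr{O},w_{\gamma}^{\partial\mathscr{O}})$ both embed into a fixed negative-order weighted space on which the Dirichlet realization acts consistently and generates an analytic semigroup, $w$ solves the corresponding abstract Cauchy problem with $w(0)=0$ there, forcing $w\equiv 0$. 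Alternatively, one may invoke the explicit boundary-to-solution operator for \eqref{DSOP:eq:heat_eq} (with $f=0$, $u_{0}=0$) underlying both theorems, which shows directly that the two solution operators coincide and hence $\mathcal{S}_{\mathrm{L}}=\mathcal{S}_{\mathrm{F}}$ with equivalent norms.
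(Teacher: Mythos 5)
Your proposal is correct and follows exactly the route the paper intends: the corollary is obtained there by ``putting together'' Theorem~\ref{DSOP:thm:LV2018_Dir_Laplace}, Theorem~\ref{DSOP:thm:main_intro} at $s=0$ and Corollary~\ref{DSOP:cor:thm:main_intro;smoothing_F-scale}, i.e.\ precisely the passage through the common boundary-data space \eqref{DSOP:eq:thm:main_intro;bd_space} that you describe. The only point the paper leaves implicit is the identification of the two solution spaces (nontrivial exactly in the non-$A_p$ range $\gamma\in(p-1,2p-1)$, where $L_{p}(\dom,w_{\gamma}^{\partial\dom})\neq F^{0}_{p,r}(\dom,w_{\gamma}^{\partial\dom})$), and your uniqueness argument in a common negative-order ambient space supplies exactly the missing justification.
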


The idea to use scales of weighted Triebel-Lizorkin spaces to obtain smoothing in a (weighted) $L_{p}$-setting also turn out to be useful in \cite{LV2018_Boundary_noise} on the heat equation with multiplicative Dirichlet boundary noise.

\textbf{Organization of the paper.}
The paper is organized as follows. Section~\ref{DSOP:sec:prelim} is devoted to the necessary preliminaries, Section~\ref{DSOP:sec:trace} characterizes the spatial trace space of the maximal regularity space and
Section~\ref{DSOP:sec:max-reg} provides a solution to the $L_{q,\mu}$-$F^{s}_{p,r,\gamma}$-maximal regularity problem for second order parabolic problems with inhomogeneous Dirichlet boundary conditions.

\textbf{Notations and conventions.} All vector spaces are over the field of complex scalars $\C$.

\section{Preliminaries}\label{DSOP:sec:prelim}

\subsection{Muckenhoupt Weights}\label{DSOP:subsec:prelim:weights}

A reference for the general theory of Muckenhoupt weights is \cite[Chapter~9]{Grafakos_modern}.

A \emph{weight} on a measure space $(S,\mathscr{A},\mu)$ is a measurable function $w:S \longra [0,\infty]$ that takes it values almost everywhere in $(0,\infty)$.
Let $w$ be a weight on $S$. For $p \in [1,\infty)$ we denote by $L_{p}(S,w)$ the space of all equivalence classes of measurable functions $f:S \longra \C$ with
\[
\norm{f}_{L^{p}(S,w)} := \left( \int_{S}|f(x)|^{p}w(x)\,d\mu(x) \right)^{1/p} < \infty.
\]
If $p \in (1,\infty)$, then $w'=w'_{p} := w^{-\frac{1}{p-1}}$ is also a weight on $S$, called the $p$-dual weight of $w$.
Furthermore, for $p \in (1,\infty)$ we have $[L_{p}(S,w)]^{*} = L_{p'}(S,w')$
isometrically with respect to the pairing
\begin{equation}\label{DSOP:eq:subsec:prelim:weights;pairing}
L_{p}(S,w) \times L_{p'}(S,w') \longra \C,\, (f,g) \mapsto \int_{S}fg\,d\mu.
\end{equation}

For $p \in (1,\infty)$ we denote by $A_{p}=A_{p}(\R^{d})$ the class of all Muckenhoupt $A_{p}$-weights, which are all the locally integrable weights for which the $A_{p}$-characteristic $[w]_{A_{p}} \in [1,\infty]$ is finite.
We furthermore set $A_{\infty} := \bigcup_{p \in (1,\infty)}A_{p}$.

The relevant weights for this paper are the power weights of the form $w=\mathrm{dist}(\,\cdot\,,\partial\mathscr{O})^{\gamma}$, where $\mathscr{O}$ is a $C^{\infty}$-domain in $\R^{d}$ and where $\gamma \in (-1,\infty)$.
If $\mathscr{O} \subset \R^{d}$ is a Lipschitz domain and $\gamma \in \R$, $p \in (1,\infty)$, then (see \cite[Lemma~2.3]{Farwig&Sohr_Weighted_Lq-theory_Stokes_resolvent_exterior_domains} or \cite[Lemma~2.3]{Mitrea&Taylor_Poisson_problem_weighted_Sobolev_spaces_Lipschitz_domains})
\begin{equation}\label{DBVP:eq:sec:prelim:power_weight_Ap}
w_{\gamma}^{\mathscr{O}} := \mathrm{dist}(\,\cdot\,,\partial\mathscr{O})^{\gamma} \in A_{p} \quad \Longlra \quad \gamma \in (-1,p-1);
\end{equation}
in particular,
\begin{equation}\label{DBVP:eq:sec:prelim:power_weight_A_infty}
w_{\gamma}^{\mathscr{O}} = \mathrm{dist}(\,\cdot\,,\partial\mathscr{O})^{\gamma} \in A_{\infty} \quad \Longlra \quad \gamma \in (-1,\infty).
\end{equation}
For the important model problem case $\mathscr{O} = \R^{d}_{+}$ we simply write $w_{\gamma}:= w_{\gamma}^{\R^{d}_{+}} = \mathrm{dist}(\,\cdot\,,\partial\R^{d}_{+})^{\gamma}$.

Furthermore, in connection with the pairing \eqref{DSOP:eq:subsec:prelim:weights;pairing}, for $p \in (1,\infty)$ we have
\[
w \in A_{p} \quad \Longlra \quad w' \in A_{p'} \quad \Longlra \quad w,w' \in A_{\infty}.
\]

\subsection{UMD spaces and $L_{q}$-maximal regularity}\label{DSOP:subsec:prelim:UMD_max-reg}

The general references for this subsection are \cite{Hytonen&Neerven&Veraar&Weis2016_Analyis_in_Banach_Spaces_I,
Hytonen&Neerven&Veraar&Weis2016_Analyis_in_Banach_Spaces_II,Kunstmann&Weis2004_lecture_notes}.

The UMD property of Banach spaces is defined through the unconditionality of martingale differences, which is a primarily probabilistic notion.
A deep result due to Bourgain and Burkholder gives a pure analytic characterization in terms of the Hilbert transform: a Banach space $X$ has the UMD property if and only if it is of class $\mathcal{HT}$, i.e.\ the Hilbert transform $H$ has a bounded extension $H_{X}$ to $L^{p}(\R;X)$ for any/some $p \in (1,\infty)$.
A Banach space with the UMD property is called a UMD Banach space. Some facts:
\begin{itemize}
\item Every Hilbert space is a UMD space;
\item If $X$ is a UMD space, $(S, \Sigma, \mu)$ is $\sigma$-finite and $p\in (1, \infty)$, then $L^p(S;X)$ is a UMD space.
\item UMD spaces are reflexives.
\item Closed subspaces and quotients of UMD spaces are again UMD spaces.
\end{itemize}
In particular, weighted Besov and Triebel-Lizorkin spaces (see Section~\ref{DSOP:subsec:prelim:fs}) are UMD spaces in the reflexive range.

Let $A$ be a closed linear operator on a Banach space $X$. For $q \in (1,\infty)$ and $v \in A_{q}(\R)$ we say that $A$ enjoys the property of
\begin{itemize}
\item $L_{q}(v,\R)$-maximal regularity if $\frac{d}{dt}+A$ is invertible as an operator on $L_{q}(v,\R)$ with domain $W^{1}_{q}(\R,v;X) \cap L_{q}(\R,v;D(A))$.
\item $L_{q}(v,\R_{+})$-maximal regularity if $\frac{d}{dt}+A$ is invertible as an operator on $L_{q}(v,\R_{+})$ with domain ${_{0}}W^{1}_{q}(\R_{+},v;X) \cap L_{q}(\R_{+},v;D(A))$, where
    \[
    {_{0}}W^{1}_{q}(\R_{+},v;X) = \{ u \in W^{1}_{q}(\R_{+},v;X):u(0)=0\}.
    \]
\end{itemize}
In the specific case of the power weight $v=v_{\mu}$ with $q \in (-1,q-1)$, we speak of $L_{q,\mu}(\R)$-maximal regularity and $L_{q,\mu}(\R_{+})$-maximal regularity.

Note that $L_{q}(v,\R)$-maximal regularity and $L_{q}(v,\R_{+})$-maximal regularity can also be formulated in terms of evolution equations. For instance, $A$ enjoys the property of $L_{q}(v,\R_{+})$-maximal regularity if and only if, for each $f \in L_{q}(v,\R_{+})$, there exists a unique solution $u \in W^{1}_{q}(\R_{+},v;X) \cap L_{q}(\R_{+},v;D(A))$ of
\[
u'+Au = f, \quad u(0)=0.
\]

A way to approach $L_{q}$-maximal regularity is through the Dore-Venni operator sum theorem (see \cite{Dore&Venni1987,PrSo}). Using \cite[Proposition~2.7]{LV2018_Dir_Laplace} in combination with the easy connection between the $H^{\infty}$-calculus and bounded imaginary powers to do so, we obtain the following proposition. As the $H^{\infty}$-calculus will be used as a black box, we do not give the definition here but the reader refer to \cite[Section~2]{Lindemulder2018_elliptic_weighted_B&F}.
\begin{prop}
Let $X$ be a UMD space, $q \in (1,\infty)$ and $v \in A_{q}(\R)$. If $A$ be a closed linear operator on a Banach space $X$ with $0 \in \rho(A)$ that has a bounded $H^{\infty}$-calculus of angle $\omega_{H^{\infty}}(A) < \frac{\pi}{2}$, then $A$ enjoys the properties of $L_{q}(v,\R)$-maximal regularity and $L_{q}(v,\R_{+})$-maximal regularity.
\end{prop}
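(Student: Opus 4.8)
The plan is to reduce $L_{q}(v,\R)$- and $L_{q}(v,\R_{+})$-maximal regularity to a statement about the invertibility of a sum of two closed operators, and then to apply the Dore--Venni operator sum theorem via the mechanism of bounded imaginary powers. The two operators in question are the time derivative $B:=\frac{d}{dt}$ acting on the vector-valued space $L_{q}(\R,v;X)$ (respectively $L_{q}(\R_{+},v;X)$), and the (pointwise extension of the) spatial operator $A$ to the same vector-valued space. Maximal regularity is precisely the assertion that $B+A$ is invertible with the natural domain $W^{1}_{q}(\R,v;X) \cap L_{q}(\R,v;D(A))$; so the whole task is to verify the hypotheses of Dore--Venni for this pair.

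First I would record that $B+A$ invertible on the intersection domain is, by definition, exactly $L_{q}(v,\R)$-maximal regularity, so there is nothing to prove beyond establishing invertibility; the same remark applies on $\R_{+}$ with the vanishing-trace domain ${_{0}}W^{1}_{q}(\R_{+},v;X) \cap L_{q}(\R_{+},v;D(A))$. Next I would invoke the standing hypothesis: $A$ has a bounded $H^{\infty}$-calculus with angle $\omega_{H^{\infty}}(A) < \frac{\pi}{2}$ and $0 \in \rho(A)$. The easy connection between the $H^{\infty}$-calculus and bounded imaginary powers, alluded to in the statement, gives that $A$ has bounded imaginary powers with power angle $\theta_{A} \leq \omega_{H^{\infty}}(A) < \frac{\pi}{2}$. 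Since $A$ acts pointwise in the time variable and commutes with the time derivative, its canonical extension to the vector-valued space retains $0 \in \rho(A)$, sectoriality, and bounded imaginary powers with the same angle.

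The heart of the argument is then to produce the companion operator $B$ with the matching good properties on the \emph{weighted} vector-valued space, and this is where I expect the main work and the main obstacle to lie. On the unweighted line $\frac{d}{dt}$ is well known to admit a bounded $H^{\infty}$-calculus of angle $\frac{\pi}{2}$ on $L_{q}(\R;X)$ for UMD $X$, via the Fourier transform and the Mikhlin multiplier theorem; the subtlety here is that the weight $v \in A_{q}(\R)$ must be incorporated. This is exactly the role of the cited \cite[Proposition~2.7]{LV2018_Dir_Laplace}: it supplies, for UMD $X$, $q \in (1,\infty)$ and $v \in A_{q}(\R)$, that $B=\frac{d}{dt}$ has a bounded $H^{\infty}$-calculus (hence bounded imaginary powers) of angle $\frac{\pi}{2}$ on $L_{q}(\R,v;X)$, and the analogous statement on $\R_{+}$ with the vanishing initial trace built into the domain. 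I would apply this proposition directly to obtain bounded imaginary powers for $B$ with power angle $\frac{\pi}{2}$, noting that the UMD property passes to $L_{q}(\R,v;X)$ since $X$ is UMD and weighted Bochner spaces over UMD spaces remain UMD.

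Finally I would combine the two ingredients through Dore--Venni. The operators $B$ and $A$ resolvent-commute, the ambient space $L_{q}(\R,v;X)$ is UMD, and the sum of the power angles satisfies $\theta_{B} + \theta_{A} < \frac{\pi}{2} + \frac{\pi}{2} = \pi$; the Dore--Venni theorem then yields that $B+A$ is closed and invertible on the intersection domain, which is precisely $L_{q}(v,\R)$-maximal regularity. For the half-line, the identical application with $B$ the derivative carrying the $u(0)=0$ condition and the domain ${_{0}}W^{1}_{q}(\R_{+},v;X) \cap L_{q}(\R_{+},v;D(A))$ gives $L_{q}(v,\R_{+})$-maximal regularity. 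The only genuine obstacle is the weighted $H^{\infty}$-calculus for the time derivative, and that is precisely what the black-boxed \cite[Proposition~2.7]{LV2018_Dir_Laplace} delivers, so the proof is essentially an assembly of cited facts; the strict angle bound $\omega_{H^{\infty}}(A)<\frac{\pi}{2}$ on the spatial side is exactly what is needed to keep the angle sum below $\pi$ against the time derivative's angle $\frac{\pi}{2}$.
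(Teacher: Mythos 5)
Your argument is correct and is essentially the same as the paper's, which likewise obtains the result by feeding the bounded imaginary powers of $A$ (inherited from the $H^{\infty}$-calculus of angle $<\frac{\pi}{2}$) and the weighted $H^{\infty}$-calculus of the time derivative from \cite[Proposition~2.7]{LV2018_Dir_Laplace} into the Dore--Venni operator sum theorem on the UMD space $L_{q}(\R,v;X)$, respectively its half-line analogue. The paper gives no more detail than this sketch, so your assembly of the cited facts matches its intended proof.
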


\subsection{Function spaces}\label{DSOP:subsec:prelim:fs}
The general references for this subsection are \cite{Amann2003_Vector-valued_distributions,Bui1982,LV2018_Dir_Laplace,
Meyries&Veraar2012_sharp_embedding,Meyries&Veraar2015_pointwise_multiplication}.

Let $X$ be a Banach space.
The space of $X$-valued tempered distributions $\mathcal{S}'(\R^{d};X)$ is defined as $\mathcal{S}'(\R^{d};X) := \mathcal{L}(\mathcal{S}(\R^{d}),X)$, the space of continuous linear operators from $\mathcal{S}(\R^{d})$ to $X$, equipped with the locally convex topology of bounded convergence.
Standard operators (derivative operators, Fourier transform, convolution, etc.) on $\mathcal{S}'(\R^{d};X)$ can be defined as in the scalar-case, see \cite[Section~III.4]{Amann1995_Lin_and_quasilinear_parabolic,Amann2003_Vector-valued_distributions}.

Let $\dom$ be an open subset of $\R^{d}$. For $p \in (1,\infty)$ and a weight $w$ on $\dom$ with $p$-dual weight $w'_{p} = w^{-\frac{1}{p-1}} \in L_{1,\mathrm{loc}}(\dom)$, there is the inclusion $L_{p}(\dom,w;X) \hookrightarrow \mathcal{D}'(\dom;X)$ (which can be seen through the pairing \eqref{DSOP:eq:subsec:prelim:weights;pairing}).
So, in this case, we can the Sobolev space $W^{k}_{p}(\dom,w;X)$ of order $k \in \N$ as the space of all $f \in \mathcal{D}'(\dom;X)$ with $D^{\alpha}f \in L_{p}(\dom,w;X)$ for each $|\alpha| \leq k$.
Equipped with the norm
\[
\norm{f}_{W^{k}_{p}(\dom,w;X)} := \sum_{|\alpha| \leq k}\norm{D^{\alpha}f}_{L_{p}(\dom,w;X)},
\]
$W^{k}_{p}(\dom,w;X)$ becomes a Banach space. An example of such a weight $w$ on a $C^{\infty}-$domain $\dom \subset \R^{d}$ for which the $p$-dual weight $w'_{p} = w^{-\frac{1}{p-1}} \in L_{1,\mathrm{loc}}(\dom)$ is the power weight $w_{\gamma}^{\partial\mathscr{O}} = \mathrm{dist}(\,\cdot\,,\partial\mathscr{O})^{\gamma}$ with $\gamma \in \R$.

Let $p \in (1,\infty)$ and $w \in A_{p}(\R^{d})$. Then $w'_{p} = w^{-\frac{1}{p-1}} \in A_{p'}$, so that $\mathcal{S}(\R^{d}) \stackrel{d}{\hookrightarrow} L_{p'}(\R^{d},w^{1-p'})$. Using the pairing, we find that $L^{p}(\R^{d},w;X) \hookrightarrow \mathcal{S}'(\R^{d};X)$ in the natural way.
For each $s \in \R$ we can thus define the Bessel potential space $H^{s}_{p}(\R^{d},w;X)$ as the space of all $f \in \mathcal{S}'(\R^{d};X)$
for which $\mathcal{J}_{s}f \in L_{p}(\R^{d},w;X)$, where $\mathcal{J}_{s} \in \mathcal{L}(\mathcal{S}'(\R^{d};X))$ is the Bessel potential operator given by
\[
\mathcal{J}_{s}f := \mathscr{F}^{-1}[(1+|\,\cdot\,|^{2})^{s/2}\hat{f}], \quad\quad f \in \mathcal{S}'(\R^{d};X).
\]
Equipped with the norm
\[
\norm{f}_{H^{s}_{p}(\R^{d},w;X)} := \norm{\mathcal{J}_{s}f}_{L^{p}(\R^{d},w;X)},
\]
$H^{s}_{p}(\R^{d},w;X)$ becomes a Banach space.
Note that $w'_{p} = w^{-\frac{1}{p-1}} \in L_{1,\mathrm{loc}}(\R^{d})$ as an $A_{p}$-weight and that  $H^{0}_{p}(\R^{d},w;X) = L_{p}(\R^{d},w;X) = W^{0}_{p}(\R^{d},w;X)$. If $X$ is a UMD space, then we have
\begin{equation}\label{DSOP:eq:prelim:H=W_UMD}
H^{n}_{p}(\R^{d},w;X) = W^{n}_{p}(\R^{d},w;X).
\end{equation}
In the reverse direction we have that if $H^{1}_{p}(\R;X) = W^{1}_{p}(\R;X)$, then $X$ is a UMD space (see \cite{Hytonen&Neerven&Veraar&Weis2016_Analyis_in_Banach_Spaces_I}).

For $0 < A < B < \infty$ we define $\Phi_{A,B}(\R^{d})$ as the set of all sequences $\varphi = (\varphi_{n})_{n \in \N} \subset \mathcal{S}(\R^{d};X)$ which can be constructed in the following way: given $\varphi_{0} \in \mathcal{S}(\R^{d})$ with
\[
0 \leq \hat{\varphi} \leq 1, \:\: \hat{\varphi}(\xi) = 1 \:\:\mbox{if $|\xi| \leq A$}, \:\:\hat{\varphi}(\xi) = 0 \:\:\mbox{if $|\xi| \geq B$},
\]
$(\varphi_{n})_{n \geq 1}$ is determined by
\[
\hat{\varphi}_{n} = \hat{\varphi}_{1}(2^{-n+1}\,\cdot\,) = \hat{\varphi}_{0}(2^{-n}\,\cdot\,) - \hat{\varphi}_{0}(2^{-n+1}\,\cdot\,), \quad\quad n \geq 1.
\]
Observe that
\begin{equation}\label{eq:Fourier_support_LP-seq}
\supp \hat{\varphi}_{0} \subset \{ \xi : |\xi| \leq B \} \quad \mbox{and} \quad \supp \hat{\varphi}_{n} \subset \{ \xi : 2^{n-1}A \leq |\xi| \leq 2^{n}B \}, \, n \geq 1.
\end{equation}
We furthermore put $\Phi(\R^{d}) := \bigcup_{0<A<B<\infty}\Phi_{A,B}(\R^{d})$.

Let $\varphi = (\varphi_{n})_{n \in \N} \in \Phi(\R^{d})$. We define the operators $\{S_{n}\}_{n \in \N} \subset  \mathcal{L}(\mathcal{S}'(\R^{d};X),\mathscr{O}_{M}(\R^{d};X))$ by
\[
S_{n}f := \varphi_{n}*f = \mathscr{F}^{-1}[\hat{\varphi}_{n}\hat{f}], \quad\quad f \in \mathcal{S}'(\R^{d};X),
\]
where $\mathscr{O}_{M}(\R^{d};X)$ stands for the space of all $X$-valued slowly increasing smooth functions on $\R^{d}$.
Given $s \in \R$, $p \in [1,\infty)$, $q \in [1,\infty]$ and $w \in A_{\infty}(\R^{d})$, the Besov space $B^{s}_{p,q}(\R^{d},w;X)$ is defined as the space of all $f \in \mathcal{S}'(\R^{d};X)$ for which
\[
\norm{f}_{B^{s}_{p,q}(\R^{d},w;X)} := \norm{ (2^{sn}S_{n}f)_{n \in \N} }_{\ell_{p}(\N)[L_{q}(\R^{d},w)(X)} < \infty.
\]
and Triebel-Lizorkin space $F^{s}_{p,q}(\R^{d},w;X)$ is defined as the space of all $f \in \mathcal{S}'(\R^{d};X)$ for which
\[
\norm{f}_{F^{s}_{p,q}(\R^{d},w;X)} := \norm{ (2^{sn}S_{n}f)_{n \in \N} }_{L_{p}(\R^{d},w)[\ell_{q}(\N)](X)} < \infty.
\]
Each choice of $\varphi \in \Phi(\R^{d})$ leads to an equivalent extended Banach norm on $\mathcal{S}'(\R^{d};X)$ for each of $B^{s}_{p,q}(\R^{d},w;X)$ and $F^{s}_{p,q}(\R^{d},w;X)$.

Let $\E \hookrightarrow \mathcal{D}'(U)$ be a Banach space of distributions on an open subset $U \subset \R^{d}$.
Given an open subset $V \subset U$,
\[
\E(V) := \{ f \in \mathcal{D}'(V) : \exists g \in \E, g_{|V} = f  \}
\]
equipped with the norm
\[
\norm{f}_{\E(V)} := \inf\{ \norm{g}_{\E} : g \in \E, g_{|V} = f \}
\]
is a Banach space with $\E(V) \hookrightarrow \mathcal{D}'(V)$.
We write
\[
F^{s}_{p,q}(U,w) := [F_{p,q}(\R^{d},w)](U),\quad
H^{s}_{p}(U,w) := [H^{s}_{p}(\R^{d},w)](U).
\]
If $\dom$ be a $C^{\infty}$-domain in $\R^{d}$, we write
\[
F^{s}_{p,q,\gamma}(\mathscr{O}) := F^{s}_{p,q}(\mathscr{O},w^{\partial\mathscr{O}}_{\gamma}), \quad
H^{s}_{p,\gamma}(\mathscr{O}) := H^{s}_{p}(\mathscr{O},w^{\partial\mathscr{O}}_{\gamma}),
W^{k}_{p,\gamma}(\mathscr{O}) := W^{k}_{p}(\mathscr{O},w^{\partial\mathscr{O}}_{\gamma}).
\]

The $H$-spaces are related to the $F$-spaces as follows. In the scalar-valued case $X=\C$, we have
\begin{equation}\label{eq:identity_Bessel-Potential_Triebel-Lizorkin;scalar-valued}
H^{s}_{p}(\R^{d},w) = F^{s}_{p,2}(\R^{d},w), \quad\quad p \in (1,\infty), w \in A_{p}.
\end{equation}
In the vector-valued case, this identity is valid if and only if $X$ is isomorphic to a Hilbert space.
For general Banach spaces $X$ we still have (see \cite[Proposition~3.12]{Meyries&Veraar2012_sharp_embedding})
\begin{equation}\label{eq:relation_Bessel-Potential_Triebel-Lizorkin;Ap}
F^{s}_{p,1}(\R^{d},w;X) \hookrightarrow H^{s}_{p}(\R^{d},w;X) \hookrightarrow F^{s}_{p,\infty}(\R^{d},w;X),
\quad\quad p \in (1,\infty),w \in A_{p}(\R^{d}),
\end{equation}
and (see \cite[(7.1)]{LV2018_Dir_Laplace})
\begin{equation}\label{DSOP:eq:prelim:emb_F_into_Lp}
F^{k}_{p,1,\gamma}(\dom;X)
\hookrightarrow W^{k}_{p,\gamma}(\dom;X), \qquad k \in \N, p \in (1,\infty) \gamma \in (-1,\infty),
\end{equation}
where $\dom \subset \R^{d}$ is a $C^{\infty}$-domain with compact boundary.

For UMD spaces $X$ there is a suitable randomized substitute for \eqref{eq:identity_Bessel-Potential_Triebel-Lizorkin;scalar-valued} (see \cite[Proposition~3.2]{Meyries&Veraar2015_pointwise_multiplication}), which in the UMD Banach function space case again reduces to a square function description.
That Littlewood-Paley decomposition can be formulated using the generalized Triebel-Lizorkin spaces $\F^{s}_{p,q}(\R^{d},w;E)$, which are defined as follows.

The following special case of the sharp Sobolev embedding from \cite{Meyries&Veraar2012_sharp_embedding} is a crucial ingredient for this paper:
\begin{equation}\label{DSOP:eq:prelim:Sob_embd}
F^{s_{0}}_{p,r_{0},\gamma_{0}}(\dom) \hookrightarrow F^{s_{1}}_{p,r_{1},\gamma_{1}}(\dom), \qquad \gamma_{1} > \gamma_{0}, s_{0} = s_{1}+\frac{\gamma_{0}-\gamma_{1}}{p}, p \in (1,\infty),
\end{equation}
where $\dom$ is a $C^{\infty}$-domain in $\R^{d}$ with a compact boundary.

From \cite{Sickel&Skrzypczak&Vybiral2014} and a retraction-coretraction argument using for instance Rychkov's extension operator \cite{Rychkov_Restr&extensions_B&F-spaces_Lipschitz_domains} (see \cite[Theorem~2.1]{Lindemulder2018_elliptic_weighted_B&F}) we obtain the following complex interpolation result: if $\dom$ is a Lipschit domain in $\R^{d}$ with a compact boundary, $p_{j} \in [1,\infty)$, $q_{j} \in [1,\infty]$, $w_{j} \in A_{\infty}$, $s_{j} \in \R$ for $j \in \{0,1\}$, $\theta \in (0,1)$, $\frac{1}{p}=\frac{1-\theta}{p_{0}}+\frac{\theta}{p_{1}}$, $\frac{1}{q}=\frac{1-\theta}{q_{0}}+\frac{\theta}{q_{1}}$, $w=w_{0}^{(1-\theta)p/p_{0}}w_{1}^{\theta p/p_{1}}$ and $s=(1-\theta)s_{0}+\theta s_{1}$, then
\begin{equation}\label{DSOP:eq:prelim:complex_int_F}
F^{s}_{p,q}(\dom,w) = [F^{s_{0}}_{p_{0},q_{0}}(\dom,w_{0}),F^{s_{1}}_{p_{1},q_{1}}(\dom,w_{1})]_{\theta}.
\end{equation}

Suppose $\R^{d}=\R^{k} \times \R^{m}$. For $a,b \in (0,\infty)$ we define $\Phi^{(a,b)}(\R^{k} \times \R^{m})$ the $(a,b)$-anisotropic analogue of $\Phi(\R^{d})$ by simply replacing the standard isotropic scaling $\delta_{\lambda}(x)=\lambda x$ in the definition by the $(a,b)$-anisotropic scaling
\[
\delta^{(a,b),k \times m}_{\lambda}(x,y) = (\lambda^{a}x,\lambda^{b}y), \qquad (x,y) \in \R^{k} \times \R^{m},
\]
see \cite{Lindemulder2017_max-reg} for more details.
Let $\varphi = (\varphi_{n})_{n \in \N} \in \Phi^{(a,b)}(\R^{k} \times \R^{m})$ and let $\{S_{n}\}_{n \in \N} \subset  \mathcal{L}(\mathcal{S}'(\R^{d};X),\mathscr{O}_{M}(\R^{d};X))$ be the associated sequence of convolution operators.
For $p,q \in [1,\infty)$, $r \in [1,\infty]$, $v \in A_{\infty}(\R^{m})$, $w \in A_{\infty}(\R^{n})$, $a,b \in (0,\infty)$ and $s \in \R$ we define the anisotropic mixed-norm Triebel-Lizorkin space $F^{s,(a,b)}_{(p,q),r}(\R^{k} \times \R^{m},(w,v))$ as the space of all $f \in \mathcal{S}'(\R^{d})$ with
\[
\norm{f}_{F^{s,(a,b)}_{(p,q),r}(\R^{k} \times R^{m},(w,v))} := \norm{ (2^{sn}S_{n}f)_{n \in \N} }_{L_{(p,q)}(\R^{k} \times \R^{m},(w,v))[\ell_{r}(\N)]} < \infty,
\]
where $L_{(p,q)}(\R^{k} \times \R^{m},(w,v)) = L_{q}(\R^{m},v)[L_{p}(\R^{k},w)]$.
Each choice of $\varphi \in \Phi^{(a,b)}(\R^{k} \times \R^{m})$ leads to an equivalent extended Banach norm on $\mathcal{S}'(\R^{d};X)$.
We put $F^{s,(a,b)}_{(p,q),r}(\R^{k}_{+} \times \R^{m},(w,v))= \left[F^{s,(a,b)}_{(p,q),r}(\R^{k} \times \R^{m},(w,v);X)\right](\R^{k}_{+} \times \R^{m})$.

\section{A Trace Theorem}\label{DSOP:sec:trace}

\begin{thm}\label{DSOP:thm:trace_MR}
Let $\mathscr{O}$ be either $\R^{d}_{+}$ or a $C^{\infty}$-domain in $\R^{d}_{+}$ with a compact boundary $\partial\mathscr{O}$.
Let $q,p,r \in (1,\infty)$, $v \in A_{q}(\R)$, $\gamma \in (-1,\infty)$, $s \in \R$ and $\rho \in (0,\infty)$. Put $\delta := \frac{s}{\rho}+1-\frac{1+\gamma}{\rho p}$.
If $s < \frac{1+\gamma}{p} < s+\rho$, then taking the trace with respect to $\partial\dom$ gives a retraction
\begin{align*}
\mathrm{tr}_{\partial\dom}: W^{1}_{q}(\R,v;F^{s}_{p,r,\gamma}(\dom) \cap & L_{q}(\R,v;F^{s+\rho}_{p,r,\gamma}(\dom) \\
& \longra F^{\delta}_{q,p}(\R,v;L_{p}(\partial\dom)) \cap L_{q}(\R,v;B^{\rho\delta}_{p,p}(\partial\dom)).
\end{align*}
\end{thm}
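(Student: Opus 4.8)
The plan is to reduce everything to the half-space model $\mathscr{O}=\R^{d}_{+}$ and then to read the statement as an instance of the trace theory for anisotropic mixed-norm Triebel--Lizorkin spaces with the parabolic scaling in which the spatial variable has anisotropy exponent $1$ and the time variable has anisotropy exponent $\rho$ (so that $i\tau$ behaves like $|\xi|^{\rho}$, matching $\partial_{t}\leftrightarrow(-\Delta)^{\rho/2}$).

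\emph{Localization.} First I would localize to the half-space. Since $\partial\dom$ is compact and $C^{\infty}$, I cover it by finitely many charts flattening $\partial\dom$ to $\partial\R^{d}_{+}=\R^{d-1}$, add one interior patch, and take a subordinate partition of unity. The time variable is untouched by these spatial operations, and $w^{\partial\dom}_{\gamma}$ is comparable to the flattened normal weight $x_{d}^{\gamma}=w_{\gamma}$. Using invariance of the weighted Triebel--Lizorkin scale under multiplication by smooth compactly supported functions and under boundary-fixing $C^{\infty}$-diffeomorphisms (pointwise multiplier results of \cite{Meyries&Veraar2015_pointwise_multiplication} together with a Rychkov-type retraction--coretraction argument \cite{Rychkov_Restr&extensions_B&F-spaces_Lipschitz_domains}, as in \cite{Lindemulder2018_elliptic_weighted_B&F}), these operations carry both the maximal regularity space and the boundary space to their half-space analogues and intertwine $\mathrm{tr}_{\partial\dom}$ with $\mathrm{tr}_{x_{d}=0}$. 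Thus it suffices to prove the statement for $\dom=\R^{d}_{+}$, tangential variable $x'\in\R^{d-1}$, normal variable $x_{d}\in\R_{+}$ with weight $x_{d}^{\gamma}$, and time $t\in\R$ with weight $v$.

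\emph{The boundary space is pure anisotropic.} Next I would identify the target. In the parabolic scaling $(1,\rho)$ the tangential smoothness $\rho\delta$ and the temporal smoothness $\delta$ sit at the same anisotropic level, since $\rho\cdot\delta=s+\rho-\frac{1+\gamma}{p}=1\cdot\rho\delta$. Hence the mixed-derivative (Fourier-multiplier) identification applies in its balanced form and yields
\[
F^{\delta}_{q,p}(\R,v;L_{p}(\R^{d-1}))\cap L_{q}(\R,v;B^{\rho\delta}_{p,p}(\R^{d-1}))=F^{\rho\delta,(1,\rho)}_{(p,q),p}(\R^{d-1}\times\R,v),
\]
using $B^{\rho\delta}_{p,p}=F^{\rho\delta}_{p,p}$ and that the anisotropic magnitude $\max\{\langle\tau\rangle^{1/\rho},\langle\xi'\rangle\}^{\rho\delta}$ is exactly $\max\{\langle\tau\rangle^{\delta},\langle\xi'\rangle^{\rho\delta}\}$. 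This identification is available on the full Triebel--Lizorkin scale for all $\gamma>-1$, because the governing weighted anisotropic Mikhlin--H\"ormander multiplier theorem holds for $A_{\infty}$-weights rather than only $A_{p}$-weights; this is the structural advantage emphasized in the introduction that replaces the classical $L_{p}$-Littlewood--Paley machinery in the non-$A_{p}$ range.

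\emph{The trace theorem core.} It remains to show that $\mathrm{tr}_{x_{d}=0}$ is a retraction onto $F^{\rho\delta,(1,\rho)}_{(p,q),p}(\R^{d-1}\times\R,v)$. For boundedness I would use the intrinsic anisotropic Littlewood--Paley characterization of the maximal regularity space together with a Hardy-type inequality in the weighted normal variable: the spatial regularity $s+\rho$ exceeds the weighted trace threshold $\frac{1+\gamma}{p}$, so the trace exists with the sharp smoothness loss $\frac{1+\gamma}{p}$, and the parabolic coupling of normal-spatial and temporal regularity produces precisely the temporal anisotropic order $\rho\delta$, i.e. $\delta=\frac1\rho(s+\rho-\frac{1+\gamma}{p})$ in the $F^{\delta}_{q,p}$-component. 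For the coretraction I would write down an explicit anisotropic Poisson-type extension operator $\mathrm{ext}$ and verify $\mathrm{tr}_{x_{d}=0}\circ\mathrm{ext}=\mathrm{id}$ by checking the two defining components $W^{1}_{q}(\R,v;F^{s}_{p,r,\gamma})$ and $L_{q}(\R,v;F^{s+\rho}_{p,r,\gamma})$ separately against the single boundary norm; the requisite multiplier estimates are again the weighted anisotropic ones valid for $\gamma\in(-1,\infty)$. These weighted anisotropic trace constructions follow the techniques of \cite{Lindemulder2017_max-reg} and \cite{Meyries&Veraar2014_traces}, now transplanted from the $L_{p}$-based spatial scale to the $F^{s}_{p,r}$-based one and from the $A_{p}$- to the $A_{\infty}$-range. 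Composing $\mathrm{ext}$ with the identifications and the localization of the first step yields the coretraction for $\mathrm{tr}_{\partial\dom}$.

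\emph{Role of the hypotheses and the main obstacle.} The two-sided condition $s<\frac{1+\gamma}{p}<s+\rho$ is exactly $\delta\in(0,1)$: the right inequality ($\delta>0$) is the trace threshold guaranteeing a bounded trace with positive smoothness $\rho\delta$, while the left inequality ($\delta<1$) keeps the temporal trace strictly below order $1$, matching the first-order regularity $W^{1}_{q}$ and ensuring that $F^{\delta}_{q,p}$ is the correct trace space rather than one carrying an extra initial-value compatibility condition. The main obstacle is that, unlike the boundary space, the maximal regularity space is a genuine intersection whose symbol $\langle\xi\rangle^{s}\max\{\langle\tau\rangle,\langle\xi\rangle^{\rho}\}$ is \emph{not} a pure anisotropic magnitude when $s\neq 0$; no lifting in $(\xi',\tau)$ can remove the isotropic factor $\langle\xi\rangle^{s}$ while commuting with $\mathrm{tr}_{x_{d}=0}$. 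The analysis must therefore treat this product structure directly in the normal variable, and in the non-$A_{p}$ regime this forces the entire argument onto the Triebel--Lizorkin-scale multiplier theory, which is precisely where the harmonic-analytic difficulty of the problem is concentrated.
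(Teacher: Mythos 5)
Your localization step and your identification of the boundary space as a balanced anisotropic mixed-norm Triebel--Lizorkin space both match the paper, and you have correctly put your finger on the real difficulty: the maximal regularity space is an intersection whose symbol is not purely anisotropic when $s\neq 0$, and the spatial weight $w_{\gamma}$ and smoothness $s$ may lie outside the range where the classical anisotropic trace machinery is available. The gap is that your proposed resolution --- ``transplanting'' the trace and Poisson-extension constructions of \cite{Lindemulder2017_max-reg} and \cite{Meyries&Veraar2014_traces} from the $L_{p}$-based, $A_{p}$-weighted setting to the $F^{s}_{p,r}$-based, $A_{\infty}$-weighted setting --- is precisely the hard part and is nowhere carried out. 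The trace results you cite from \cite{Lindemulder2017_max-reg} are established for spatial weights in the $A_{p}$-range $\gamma\in(-1,p-1)$ and for nonnegative smoothness; asserting that the ``requisite multiplier estimates are again the weighted anisotropic ones valid for $\gamma\in(-1,\infty)$'' presupposes a theory that does not exist in the cited references, and your closing remark that ``the analysis must treat this product structure directly in the normal variable'' names the obstacle without overcoming it.

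The paper takes a different and more economical route that avoids the non-$A_{p}$ difficulty altogether. The key observation is that the sharp Sobolev embedding \eqref{DSOP:eq:prelim:Sob_embd} shifts the pair $(s,\gamma)$ along lines on which $\delta=\frac{s}{\rho}+1-\frac{1+\gamma}{\rho p}$ is invariant, so the target boundary space does not change under such shifts. For boundedness of the trace, Lemma~\ref{DSOP:lemma:embd_MR_anisotropic} uses the hypothesis $s<\frac{1+\gamma}{p}<s+\rho$ to choose $\theta$ with $\gamma-(s+\theta\rho)p\in(-1,p-1)$ and embeds $\M$ into an intersection space with an $A_{p}$-weight and Bessel-potential spatial scale (via \eqref{DSOP:eq:prelim:Sob_embd} and complex interpolation), to which \cite[Theorem~2.1 \& Corollary~4.9]{Lindemulder2017_max-reg} applies directly. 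For the coretraction, the extension operator of \cite{Lindemulder2017_max-reg} is shown to land in an anisotropic mixed-norm $F$-space with shifted (admissible) weight, which Lemma~\ref{DSOP:lemma:embd_anisotropic_MR} embeds back into $\M$; the negative-smoothness component is handled by the duality argument of Lemma~\ref{DSOP:lemma:embd_Lp_into_neg_smoothness} rather than by any new multiplier theorem. If you want to salvage your outline, you should replace the ``transplantation'' step by these weight-shifting embeddings and the duality/interpolation lemmas, since without them the central estimates of your proof are unsubstantiated.
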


Before we give the proof of Theorem~\ref{DSOP:thm:trace_MR}, we start with some preparatory lemma's.
\begin{lemma}\label{DSOP:lemma:embd_Lp_into_neg_smoothness}
Let $p,r \in (1,\infty)$, $\gamma \in (-1,\infty)$ and $\delta \in (0,\infty)$ be such that $\gamma-\delta p \in (-1,p-1)$.
Then
\[
L_{p}(\R^{d}_{+},w_{\gamma}) \hookrightarrow F^{-\delta}_{p,r}(\R^{d}_{+},w_{\gamma-\delta p}).
\]
\end{lemma}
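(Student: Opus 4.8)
The plan is to trade the ``bad'' part of the weight for negative smoothness by means of the pointwise multiplier $x_{d}^{\delta}$, thereby moving into the Muckenhoupt range where Littlewood--Paley theory is available. Writing $x_{d}=\dist(\,\cdot\,,\partial\R^{d}_{+})$, the starting observation is the elementary identity
\[
\norm{f}_{L_{p}(\R^{d}_{+},w_{\gamma})} = \norm{x_{d}^{\delta}f}_{L_{p}(\R^{d}_{+},w_{\gamma-\delta p})},
\]
valid because $x_{d}^{\delta p}\,w_{\gamma-\delta p}=w_{\gamma}$. By hypothesis $\gamma-\delta p\in(-1,p-1)$, so $w_{\gamma-\delta p}\in A_{p}(\R^{d})$ and the full Littlewood--Paley machinery is at our disposal for this weight: in particular $L_{p}(\R^{d}_{+},w_{\gamma-\delta p})=F^{0}_{p,2}(\R^{d}_{+},w_{\gamma-\delta p})$ and $L_{p}(\R^{d},w_{\gamma-\delta p})\hookrightarrow\mathcal{S}'(\R^{d})$. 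Setting $g:=x_{d}^{\delta}f$, the claim reduces to showing that multiplication by $x_{d}^{-\delta}$ maps $L_{p}(\R^{d}_{+},w_{\gamma-\delta p})$ boundedly into $F^{-\delta}_{p,r}(\R^{d}_{+},w_{\gamma-\delta p})$, i.e.\ to the Hardy-type bound $\norm{x_{d}^{-\delta}g}_{F^{-\delta}_{p,r}(\R^{d}_{+},w_{\gamma-\delta p})}\lesssim \norm{g}_{L_{p}(\R^{d}_{+},w_{\gamma-\delta p})}$.

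Next I would pass to the full space, which is convenient because $F^{-\delta}_{p,r}(\R^{d}_{+},w_{\gamma-\delta p})$ is by definition the restriction space $[F^{-\delta}_{p,r}(\R^{d},|x_{d}|^{\gamma-\delta p})](\R^{d}_{+})$. Extend $g$ by zero to $G\in L_{p}(\R^{d},|x_{d}|^{\gamma-\delta p})$; this is legitimate precisely because $|x_{d}|^{\gamma-\delta p}\in A_{p}$, and $\norm{G}_{L_{p}(\R^{d},|x_{d}|^{\gamma-\delta p})}=\norm{g}_{L_{p}(\R^{d}_{+},w_{\gamma-\delta p})}$. Since $|x_{d}|^{-\delta}G$ agrees with $f=x_{d}^{-\delta}g$ on $\R^{d}_{+}$ and vanishes on $\R^{d}_{-}$, it is an admissible extension of $f$, and it therefore suffices to prove the full-space estimate
\[
\norm{|x_{d}|^{-\delta}G}_{F^{-\delta}_{p,r}(\R^{d},|x_{d}|^{\gamma-\delta p})}\lesssim \norm{G}_{L_{p}(\R^{d},|x_{d}|^{\gamma-\delta p})},
\]
where the product on the left is understood in the sense of $F^{-\delta}$--$F^{\delta}$ duality (in the non-$A_{p}$ range for $w_{\gamma}$ it need not even be locally integrable up to the boundary, which is exactly why the target carries negative smoothness).

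To establish this estimate I would decompose $G$ according to the dyadic distance to the hyperplane $\{x_{d}=0\}$: writing $G=\sum_{j\in\Z}G_{j}$ with $G_{j}:=G\,\mathbf{1}_{\{2^{-j-1}\le|x_{d}|<2^{-j}\}}$, one has $|x_{d}|^{-\delta}\approx 2^{\delta j}$ on the support of $G_{j}$, so $|x_{d}|^{-\delta}G\approx\sum_{j}2^{\delta j}G_{j}$ with each $G_{j}$ supported in a slab of thickness $\sim 2^{-j}$. Applying the projection $S_{n}$, which acts at spatial scale $2^{-n}$, the factor $2^{-\delta n}$ from the $F^{-\delta}$-norm compensates the gain $2^{\delta j}$ in the regime $n\gtrsim j$, while for $n\lesssim j$ the rapid decay of $\widehat{\varphi}_{n}$ supplies the missing powers; summing the resulting geometric series and controlling the $L_{p}(|x_{d}|^{\gamma-\delta p})$-norms by the $A_{p}$-weighted Fefferman--Stein maximal inequality yields the bound. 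Equivalently, and perhaps more cleanly, one may invoke a pointwise-multiplication theorem for multiplication by $\dist(\,\cdot\,,\partial\R^{d}_{+})^{-\delta}$ in weighted Triebel--Lizorkin spaces (in the spirit of \cite{Meyries&Veraar2015_pointwise_multiplication}), which encodes precisely this loss of $\delta$ derivatives.

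The main obstacle is genuinely the range $\gamma\ge p-1$, where $w_{\gamma}$ leaves the $A_{p}$-class and $|x_{d}|^{-\delta}G$ ceases to be locally integrable up to the boundary; here the estimate cannot be read off pointwise and one must argue through the negative-smoothness duality, using that the boundary/support structure on $\R^{d}_{+}$ supplies the vanishing needed to offset the singularity of $|x_{d}|^{-\delta}$ (the borderline case $n\sim j$ in the dyadic summation being the delicate one). Once the displayed full-space estimate is in hand, restriction to $\R^{d}_{+}$ together with the chain of reductions above completes the proof.
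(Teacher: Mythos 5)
Your reduction of the lemma to the multiplier estimate $\norm{x_{d}^{-\delta}g}_{F^{-\delta}_{p,r}(\R^{d}_{+},w_{\gamma-\delta p})}\lesssim\norm{g}_{L_{p}(\R^{d}_{+},w_{\gamma-\delta p})}$ is only a reformulation of the statement, and the proof of that estimate is where the argument has a genuine gap. First, in the range $\gamma\geq p-1$ the product $x_{d}^{-\delta}g$ is not locally integrable up to the boundary, so it is not a priori a distribution on $\R^{d}$; you acknowledge this and say the product is ``understood by duality'', but you never set up that duality, and without it the object whose $F^{-\delta}_{p,r}$-norm you propose to estimate is undefined. Second, in the dyadic-slab analysis the regime $n\lesssim j$ is not handled by ``rapid decay of $\widehat{\varphi}_{n}$'' (which is compactly supported); the gain there has to come from a H\"older estimate of $G_{j}$ over a slab of thickness $2^{-j}$ tested against the kernel $\varphi_{n}$, and it is exactly in that computation that the hypothesis $\gamma-\delta p>-1$ must be used to beat the factor $2^{\delta(j-n)}$. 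This is the entire analytic content of the lemma, and you leave it --- together with the self-declared ``delicate'' case $n\sim j$ and the interchange of $\sum_{j}$ with the $L_{p}(\ell_{r})$-norm --- unproved. The alternative you offer, a pointwise multiplier theorem for $\dist(\,\cdot\,,\partial\R^{d}_{+})^{-\delta}$ mapping into negative-smoothness weighted Triebel--Lizorkin spaces, is not available in \cite{Meyries&Veraar2015_pointwise_multiplication} in the form needed; such a result would be essentially equivalent to the lemma itself.

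The paper sidesteps all of this by dualizing first: by reflexivity and density of $\mathcal{D}(\R^{d}_{+})$ one has $[F^{-\delta}_{p,r}(\R^{d}_{+},w_{\gamma-\delta p})]^{*}=\mathring{F}^{\delta}_{p',r'}(\R^{d}_{+},w_{\gamma'+\delta p'})$, and on the dual side the weight exponents can be brought into the Muckenhoupt range by the Sobolev embedding \eqref{DSOP:eq:prelim:Sob_embd}, after which the elementary embedding \eqref{DSOP:eq:prelim:emb_F_into_Lp} and the Hardy-type embedding of \cite[Corollary~3.4]{LV2018_Dir_Laplace} give a dense embedding into $L_{p'}(\R^{d}_{+},w_{\gamma'})$; dualizing back yields the lemma, with the distributional interpretation handled automatically by the duality pairing. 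Your closing remark that one ``must argue through the negative-smoothness duality'' points at precisely this argument, but you do not carry it out. To repair your proof you would either have to execute the slab-by-slab estimate in full (in effect reproving a weighted Hardy inequality by hand) or switch to the duality route.
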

\begin{proof}
Recall from \cite[Section~5.2]{Lindemulder2018_elliptic_weighted_B&F} that $F^{-\delta}_{p,r}(\R^{d}_{+},w_{\gamma-\delta p})$ is a reflexive Banach space with
\[
\mathcal{D}(\R^{d}_{+}) \stackrel{d}{\hookrightarrow} F^{-\delta}_{p,r}(\R^{d}_{+},w_{\gamma-\delta p}) \hookrightarrow \mathcal{D}'(\R^{d}_{+}),
\]
so that
\[
\mathcal{D}(\R^{d}_{+}) \stackrel{d}{\hookrightarrow} [F^{-\delta}_{p,r}(\R^{d}_{+},w_{\gamma-\delta p})]^{*} \hookrightarrow \mathcal{D}'(\R^{d}_{+}),
\]
under the natural identifications. By \cite[Lemma~4.19]{Lindemulder2018_elliptic_weighted_B&F}, \begin{equation}\label{DSOP:eq:lemma:embd_Lp_into_neg_smoothness;1}
[F^{-\delta}_{p,r}(\R^{d}_{+},w_{\gamma-\delta p})]^{*} = \mathring{F}^{\delta}_{p',r'}(\R^{d}_{+},w_{\gamma'+\delta p'}),
\end{equation}
where $\mathring{F}^{\delta}_{p',r'}(\R^{d}_{+},w_{\gamma'+\delta p'})$ denotes the closure of $\mathcal{D}(\R^{d}_{+})$ in $F^{\delta}_{p',r'}(\R^{d}_{+},w_{\gamma'+\delta p'})$.
Now pick $\ell \in [0,\delta) \cap \N$ such that $\gamma'+\ell p' \in (-1,p'-1)$. Then
\begin{equation}\label{DSOP:eq:lemma:embd_Lp_into_neg_smoothness;2}
\mathring{F}^{\delta}_{p',r'}(\R^{d},w_{\gamma'+\delta p'}) \hookrightarrow \mathring{F}^{\ell}_{p',1}(\R^{d},w_{\gamma'+\ell p'})
\end{equation}
as a consequence of the Sobolev embedding \eqref{DSOP:eq:prelim:Sob_embd}.
By a combination of the elementary embedding \eqref{DSOP:eq:prelim:emb_F_into_Lp} and the Sobolev embedding \cite[Corollary~3.4]{LV2018_Dir_Laplace},
\begin{equation}\label{DSOP:eq:lemma:embd_Lp_into_neg_smoothness;4}
\mathring{F}^{\ell}_{p',1}(\R^{d}_{+},w_{\gamma'+\ell p'})  \hookrightarrow L_{p'}(\R^{d}_{+},w_{\gamma'}).
\end{equation}
Putting together \eqref{DSOP:eq:lemma:embd_Lp_into_neg_smoothness;1}, \eqref{DSOP:eq:lemma:embd_Lp_into_neg_smoothness;2}, \eqref{DSOP:eq:lemma:embd_Lp_into_neg_smoothness;4}, we obtain
\[
[F^{-\delta}_{p,r}(\R^{d}_{+},w_{\gamma-\delta p})]^{*} \hookrightarrow L_{p'}(\R^{d}_{+},w_{\gamma'}).
\]
Moreover, as $\mathcal{D}(\R^{d}_{+})$ is dense in each of these two spaces, the inclusion is dense.
Dualizing finally gives the desired inclusion.
\end{proof}

\begin{lemma}\label{DSOP:lemma:embd_anisotropic_MR}
Let $q,p,r \in (1,\infty)$, $v \in A_{q}(\R)$, $\gamma \in (-1,\infty)$, $s \in \R$ and $\rho \in (0,\infty)$.
Let $\tilde{s} \in (0,\infty)$ with $\tilde{s} \geq s$, $\tilde{\gamma} := \gamma + (\tilde{s}-s)p$ and $\sigma := \frac{\tilde{s}}{p}+1$.
Let $\delta \in (0,\infty)$ be such that $\tilde{\gamma}-\delta p \in (-1,p-1)$ and put $\eta:= \frac{1}{\sigma-1}\delta$. Then
\begin{align}
& F^{\sigma,(\frac{1}{\rho},1)}_{(p,q),1}(\R^{d}_{+} \times \R,(w_{\tilde{\gamma}},v)) \cap F^{\sigma+\frac{\eta}{\rho},(\frac{1}{\rho},1)}_{(p,q),1}(\R^{d}_{+} \times \R,(w_{\tilde{\gamma}+\eta p},v)) \nonumber \\
&\qquad\qquad\qquad \hookrightarrow \quad W^{1}_{q}(\R,v;F^{s}_{p,r}(\R^{d}_{+},w_{\gamma})) \cap L_{q}(\R,v;F^{s+\rho}_{p,r}(\R^{d}_{+},w_{\gamma})). \label{DSOP:eq:lemma:embd_anisotropic_MR}
\end{align}
\end{lemma}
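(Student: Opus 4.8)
The plan is to prove the two continuous inclusions underlying the target separately and then intersect. Since the right-hand side is the intersection $W^{1}_{q}(\R,v;F^{s}_{p,r}(\R^{d}_{+},w_{\gamma}))\cap L_{q}(\R,v;F^{s+\rho}_{p,r}(\R^{d}_{+},w_{\gamma}))$, and the left-hand side is itself an intersection of the two anisotropic factors, it suffices to bound the left-hand side into each of the two target factors; as an intersection embeds into each of its constituents, I may use the whole left-hand side (or whichever single anisotropic factor is convenient) as the source in each case. Throughout, the passage from the microscopic index $1$ on the left to the index $r$ on the right is free, being a consequence of $F^{\,\bullet}_{p,1}\hookrightarrow F^{\,\bullet}_{p,r}$, so I may always work with the stronger index $1$.

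For the spatial target $L_{q}(\R,v;F^{s+\rho}_{p,r}(\R^{d}_{+},w_{\gamma}))$ I would first use the structural theory of anisotropic mixed-norm Triebel--Lizorkin spaces from \cite{Lindemulder2017_max-reg}: with the parabolic anisotropy $(\tfrac1\rho,1)$, such a space embeds into the associated iterated time-vector-valued spaces, the anisotropic smoothness being distributed between the time and space directions with the spatial direction counting at the rate $\tfrac1\rho$. Placing the full regularity budget in space reduces the claim to a purely spatial embedding $F^{\alpha}_{p,1}(\R^{d}_{+},w_{\theta})\hookrightarrow F^{s+\rho}_{p,r}(\R^{d}_{+},w_{\gamma})$ with $\theta\in\{\tilde\gamma,\tilde\gamma+\eta p\}$. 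For the temporal target $W^{1}_{q}(\R,v;F^{s}_{p,r}(\R^{d}_{+},w_{\gamma}))$ I would instead reserve one unit of the anisotropic budget for a time derivative: using $\sigma\ge 1$ together with $F^{\sigma}_{q,1}(\R,v)\hookrightarrow H^{\sigma}_{q}(\R,v)\hookrightarrow H^{1}_{q}(\R,v)=W^{1}_{q}(\R,v)$ (by \eqref{eq:relation_Bessel-Potential_Triebel-Lizorkin;Ap} and \eqref{DSOP:eq:prelim:H=W_UMD} on the UMD-valued time line), tensored with the remaining spatial regularity, again reduces matters to a spatial embedding of the same type, and \eqref{DSOP:eq:prelim:emb_F_into_Lp} converts the integer-order $F_{p,1}$-norms into the Sobolev $W^{k}_{p}$-norms where needed.

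The genuine difficulty is the spatial weight. Since $\tilde s\ge s$ forces $\tilde\gamma=\gamma+(\tilde s-s)p\ge\gamma$, and likewise $\tilde\gamma+\eta p\ge\gamma$, all weights appearing on the left are at least $\gamma$, and moreover need not lie in the Muckenhoupt $A_{p}$-range. The hypothesis $\tilde\gamma-\delta p\in(-1,p-1)$ is what designates an auxiliary $A_{p}$-weight $w_{\tilde\gamma-\delta p}$ through \eqref{DBVP:eq:sec:prelim:power_weight_Ap}, at which the multiplier-based anisotropic machinery and the identification with iterated spaces are valid. Reaching such a smaller weight from $\tilde\gamma$ (or $\tilde\gamma+\eta p$) is a weight \emph{decrease}, i.e.\ precisely the direction that the sharp Sobolev embedding \eqref{DSOP:eq:prelim:Sob_embd} does not provide; this is the main obstacle. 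I would overcome it exactly as in Lemma~\ref{DSOP:lemma:embd_Lp_into_neg_smoothness}, by duality: the required weight-decreasing spatial embeddings $F^{\alpha}_{p,1}(\R^{d}_{+},w_{\theta})\hookrightarrow F^{\beta}_{p,r}(\R^{d}_{+},w_{\gamma})$ with $\theta>\gamma$ are obtained by dualizing and applying the forward embedding \eqref{DSOP:eq:prelim:Sob_embd}, together with \eqref{DSOP:eq:prelim:emb_F_into_Lp} and the Sobolev embedding \cite[Corollary~3.4]{LV2018_Dir_Laplace}, in the dual spaces. Both anisotropic factors are genuinely needed: because $\partial\R^{d}_{+}$ is non-compact, a single reweighting is too lossy in one of the two frequency regimes $\langle\tau\rangle\lesssim\langle\xi\rangle^{\rho}$ and $\langle\tau\rangle\gtrsim\langle\xi\rangle^{\rho}$, and the coupling $\eta=\tfrac{1}{\sigma-1}\delta$ between the smoothness gap $\tfrac\eta\rho$ and the weight gap $\eta p$ of the two factors is exactly what lets them jointly reproduce the symbol $\langle\xi\rangle^{s}(\langle\tau\rangle+\langle\xi\rangle^{\rho})$ of the target across both regimes.

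Assembling the two inclusions yields the asserted embedding. The decisive point is the weight-decreasing spatial step of the third paragraph; everything else is bookkeeping of exponents, which I expect to come out consistently once the anisotropic splitting, the $H=W$ and $F_{p,1}\hookrightarrow W^{k}_{p}$ reductions, and the dual weight shift of Lemma~\ref{DSOP:lemma:embd_Lp_into_neg_smoothness} are in place. It is precisely to have this last ingredient ready that Lemma~\ref{DSOP:lemma:embd_Lp_into_neg_smoothness} was proved first.
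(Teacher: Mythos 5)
Your proposal assembles several of the right ingredients (the intersection representation of the anisotropic mixed-norm spaces as iterated time--space spaces, the role of Lemma~\ref{DSOP:lemma:embd_Lp_into_neg_smoothness} in entering the Triebel--Lizorkin scale when the weight lies outside the $A_{p}$-range, the free passage from microscopic index $1$ to $r$), but it has two genuine problems. First, you have the direction of the sharp Sobolev embedding backwards: \eqref{DSOP:eq:prelim:Sob_embd} is precisely the weight-\emph{decreasing} embedding (one lowers the weight exponent and the smoothness proportionally, the smoothness loss being $1/p$ times the weight loss), and the paper uses it in exactly this direction in the very first line of its proof, to reduce to the case $\tilde{s}=s$, $\tilde{\gamma}=\gamma$, and again to pass from $F^{\rho\sigma}_{p,1}(\R^{d}_{+},w_{\tilde{\gamma}})$ down to $F^{s+\rho}_{p,r}(\R^{d}_{+},w_{\gamma})$. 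The duality argument of Lemma~\ref{DSOP:lemma:embd_Lp_into_neg_smoothness} is not a substitute for a ``missing'' weight-decreasing embedding; it is needed only because the time-regular endpoint of the intersection representation delivers $L_{p}(\R^{d}_{+},w_{\tilde{\gamma}})$, which is not itself a Triebel--Lizorkin space when $\tilde{\gamma}\notin(-1,p-1)$. So the ``main obstacle'' you identify, and your proposed resolution of it, are misplaced.

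Second, and more seriously, your treatment of the temporal target $W^{1}_{q}(\R,v;F^{s}_{p,r}(\R^{d}_{+},w_{\gamma}))$ has a gap. After the intersection representation the time-regular endpoint is $H^{\sigma}_{q}(\R,v;L_{p}(\R^{d}_{+},w_{\gamma}))$, which carries \emph{no} positive spatial smoothness; ``reserving one unit of time regularity and tensoring with the remaining spatial regularity'' does not yield $\partial_{t}u\in L_{q}(\R,v;F^{s}_{p,r}(\R^{d}_{+},w_{\gamma}))$, because converting the surplus time regularity $\sigma-1$ into spatial smoothness $s$ at the weight $\gamma$ is exactly the nontrivial step. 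The paper does this via the complex interpolation inclusion \eqref{DSOP:eq:lemma:embd_anisotropic_MR;4}, namely $A\cap B\hookrightarrow[A,B]_{1/\sigma}$ with $A=L_{q}(\R,v;F^{\rho\sigma+\eta}_{p,r}(\R^{d}_{+},w_{\gamma+\eta p}))$ and $B=H^{\sigma}_{q}(\R,v;F^{-\delta}_{p,r}(\R^{d}_{+},w_{\gamma-\delta p}))$, identified as $W^{1}_{q}(\R,v;F^{s}_{p,r}(\R^{d}_{+},w_{\gamma}))$ through the mixed-derivative theorem of \cite{LV2018_Dir_Laplace} together with \eqref{DSOP:eq:prelim:complex_int_F} and \eqref{DSOP:eq:prelim:H=W_UMD}. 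This is also the true reason both anisotropic factors are needed: the second factor supplies the surplus $(+\eta,+\eta p)$ in smoothness and weight that cancels, under interpolation at $\theta=1/\sigma$, the deficit $(-\delta,-\delta p)$ introduced by Lemma~\ref{DSOP:lemma:embd_Lp_into_neg_smoothness} at the other endpoint --- this is the meaning of the calibration $\eta=\frac{1}{\sigma-1}\delta$ --- and not the frequency-regime heuristic you give. Without this interpolation step your argument does not close.
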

\begin{proof}
By Sobolev embedding \eqref{DSOP:eq:prelim:Sob_embd} we have
\begin{align*}
& W^{1}_{q}(\R,v;F^{\tilde{s}}_{p,r}(\R^{d}_{+},w_{\tilde{\gamma}})) \cap L_{q}(\R,v;F^{\tilde{s}+\rho}_{p,r}(\R^{d}_{+},w_{\tilde{\gamma}})) \\
&\qquad\qquad\qquad \hookrightarrow \quad W^{1}_{q}(\R,v;F^{s}_{p,r}(\R^{d}_{+},w_{\gamma})) \cap L_{q}(\R,v;F^{s+\rho}_{p,r}(\R^{d}_{+},w_{\gamma})).
\end{align*}
Therefore, we may without loss of generality assume that $s = \tilde{s}$ and $\tilde{\gamma} = \gamma$.

By a combination of \cite[Examples~5.9]{Lindemulder_Intersection} and \cite[Examples~5.10]{Lindemulder_Intersection} (using the fact that $\C$ is a UMD Banach space and that $\C$ has type 1), we find that
\begin{equation}\label{DSOP:eq:lemma:embd_anisotropic_MR;2}
F^{\sigma+\frac{\eta}{\rho},(\frac{1}{\rho},1)}_{(p,q),1}(\R^{d}_{+} \times \R,(w_{\gamma+\eta p},v))  \hookrightarrow L_{q}(\R,v;F^{\rho\sigma+\eta}_{p,1}(\R^{d}_{+},w_{\gamma+\eta p}))
\end{equation}
and 
\begin{equation}\label{DSOP:eq:lemma:embd_anisotropic_MR;1}
F^{\sigma,(\frac{1}{\rho},1)}_{(p,q),1}(\R^{d}_{+} \times \R,(w_{\gamma},v)) \hookrightarrow H^{\sigma}_{q}(\R,v;L_{p}(\R^{d}_{+},w_{\gamma})) \cap L_{q}(\R,v;F^{\rho\sigma}_{p,1}(\R^{d}_{+},w_{\gamma})).
\end{equation}

Combining \eqref{DSOP:eq:lemma:embd_anisotropic_MR;2}, \eqref{DSOP:eq:lemma:embd_anisotropic_MR;1} and Lemma~\ref{DSOP:lemma:embd_Lp_into_neg_smoothness}, we find
\begin{align}
& F^{\sigma,(\frac{1}{\rho},1)}_{(p,q),1}(\R^{d}_{+} \times \R,(w_{\gamma},v)) \cap F^{\sigma+\frac{\eta}{\rho},(\frac{1}{\rho},1)}_{(p,q),1}(\R^{d}_{+} \times \R,(w_{\gamma+\eta p},v)) \nonumber \\
&\qquad\qquad\qquad \hookrightarrow \quad H^{\sigma}_{q}(\R,v;L_{p}(\R^{d}_{+},w_{\gamma})) \cap L_{q}(\R,v;F^{\rho\sigma+\eta}_{p,1}(\R^{d}_{+},w_{\gamma+\eta p})) \nonumber \\
&\qquad\qquad\qquad \hookrightarrow \quad H^{\sigma}_{q}(\R,v;F^{-\delta}_{p,r}(\R^{d}_{+},w_{\gamma-\delta p})) \cap L_{q}(\R,v;F^{\rho\sigma+\eta}_{p,r}(\R^{d}_{+},w_{\gamma+\eta p})). \label{DSOP:eq:lemma:embd_anisotropic_MR;3}
\end{align}

From a combination of \cite[Theorem~3.18]{LV2018_Dir_Laplace},
\eqref{DSOP:eq:prelim:complex_int_F} and \eqref{DSOP:eq:prelim:H=W_UMD} it follows that
\begin{align}
& H^{\sigma}_{q}(\R,v;F^{-\delta}_{p,r}(\R^{d}_{+},w_{\gamma-\delta p})) \cap L_{q}(\R,v;F^{\rho\sigma+\eta}_{p,r}(\R^{d}_{+},w_{\gamma+\eta p})) \nonumber \\
&\qquad\qquad\qquad \hookrightarrow \quad
[L_{q}(\R,v;F^{\rho\sigma+\eta}_{p,r}(\R^{d}_{+},w_{\gamma+\eta p})),H^{\sigma}_{q}(\R,v;F^{-\delta}_{p,r}(\R^{d}_{+},w_{\gamma-\delta p}))]_{1/\sigma} \nonumber \\
&\qquad\qquad\qquad\:\: = \quad W^{1}_{q}(\R,v;F^{s}_{p,r}(\R^{d}_{+},w_{\gamma})), \label{DSOP:eq:lemma:embd_anisotropic_MR;4}
\end{align}
where we used that $\eta(1-\frac{1}{\sigma})+(-\delta)\frac{1}{\sigma}=0$ and $\rho\sigma(1-\frac{1}{\sigma}) = s$ and the fact that $F^{\rho\sigma+\eta}_{p,r}(\R^{d}_{+},w_{\gamma+\eta p}))$ and $F^{-\delta}_{p,r}(\R^{d}_{+},w_{\gamma-\delta p})$ are UMD spaces.

Finally, combining \eqref{DSOP:eq:lemma:embd_anisotropic_MR;1}, \eqref{DSOP:eq:lemma:embd_anisotropic_MR;3} and \eqref{DSOP:eq:lemma:embd_anisotropic_MR;4} and using that $\rho\sigma=s+\rho$, we arrive at the desired inclusion.
\end{proof}

\begin{lemma}\label{DSOP:lemma:embd_MR_anisotropic}
Let $q,p \in (1,\infty)$, $v \in A_{q}(\R)$, $\gamma \in (-1,\infty)$, $s \in \R$ and $\rho \in (0,\infty)$. If $\theta \in [0,1]$ is such that $s+\theta\rho \in (0,\infty) \cap (\frac{1+\gamma}{p}-1,\frac{1+\gamma}{p})$, then
\begin{align}
&W^{1}_{q}(\R,v;F^{s}_{p,\infty}(\R^{d}_{+},w_{\gamma})) \cap L_{q}(\R,v;F^{s+\rho}_{p,\infty}(\R^{d}_{+},w_{\gamma})) \nonumber \\
&\qquad\qquad \hookrightarrow \quad H^{1-\theta}_{q}(\R,v;L_{p}(\R^{d}_{+},w_{\gamma-(s+\theta\rho)p})) \cap L_{q}(\R,v;H^{(1-\theta)\rho}_{p}(\R^{d}_{+},w_{\gamma-(s+\theta\rho)p})). \label{DSOP:eq:lemma:embd_MR_anisotropic}
\end{align}
\end{lemma}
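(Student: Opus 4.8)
Set $\alpha := s+\theta\rho$. By assumption $\alpha \in (\tfrac{1+\gamma}{p}-1,\tfrac{1+\gamma}{p})$, which is exactly the statement that $\gamma-\alpha p \in (-1,p-1)$; hence by \eqref{DBVP:eq:sec:prelim:power_weight_Ap} the shifted weight $w_{\gamma-\alpha p}$ lies in $A_{p}(\R^{d})$, whereas $w_{\gamma}$ need not. The entire purpose of the estimate is to trade the non-$A_{p}$ weight $w_{\gamma}$ against spatial smoothness in order to reach this $A_{p}$-range, where Bessel potential spaces and their interpolation are available. The plan is to perform this trade first, purely in the spatial variable, and only afterwards to interpolate in time; the order turns out to be essential.

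The spatial ingredient is the reverse Sobolev embedding
\[
F^{\sigma+\alpha}_{p,\infty}(\R^{d}_{+},w_{\gamma}) \hookrightarrow H^{\sigma}_{p}(\R^{d}_{+},w_{\gamma-\alpha p}), \qquad \sigma \in \R,
\]
which lowers the smoothness by $\alpha>0$ and the weight parameter by $\alpha p$. The sharp Sobolev embedding \eqref{DSOP:eq:prelim:Sob_embd} only lets one \emph{raise} the weight, so this downward embedding must be extracted by duality, exactly as in the proof of Lemma~\ref{DSOP:lemma:embd_Lp_into_neg_smoothness} (the case $\sigma=0$): one dualises, applies \eqref{DSOP:eq:prelim:Sob_embd} and the elementary embedding \eqref{DSOP:eq:prelim:emb_F_into_Lp} on the dual side, and dualises back, here at the shifted smoothness level $\sigma$, the final passage to the Bessel potential scale being \eqref{eq:relation_Bessel-Potential_Triebel-Lizorkin;Ap}. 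I would apply this with $\sigma=-\theta\rho$ to the first space defining the left-hand intersection and with $\sigma=(1-\theta)\rho\geq0$ to the second, and lift the resulting inclusions through the time-Bochner spaces, to obtain
\[
W^{1}_{q}(\R,v;F^{s}_{p,\infty}(\R^{d}_{+},w_{\gamma}))\cap L_{q}(\R,v;F^{s+\rho}_{p,\infty}(\R^{d}_{+},w_{\gamma})) \hookrightarrow W^{1}_{q}(\R,v;Y_{0})\cap L_{q}(\R,v;Y_{1}),
\]
with $Y_{0}:=H^{-\theta\rho}_{p}(\R^{d}_{+},w_{\gamma-\alpha p})$ and $Y_{1}:=H^{(1-\theta)\rho}_{p}(\R^{d}_{+},w_{\gamma-\alpha p})$. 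The second factor on the right is already the second factor of the target space in \eqref{DSOP:eq:lemma:embd_MR_anisotropic}.

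It remains to produce the first target factor. Since $w_{\gamma-\alpha p}\in A_{p}$, the spaces $Y_{0},Y_{1}$ are UMD, so $W^{1}_{q}(\R,v;Y_{0})=H^{1}_{q}(\R,v;Y_{0})$ by \eqref{DSOP:eq:prelim:H=W_UMD}, and I may invoke the mixed-derivative embedding \cite[Theorem~3.18]{LV2018_Dir_Laplace} (as already used for Lemma~\ref{DSOP:lemma:embd_anisotropic_MR}) with interpolation parameter $\theta$ to get
\[
H^{1}_{q}(\R,v;Y_{0})\cap L_{q}(\R,v;Y_{1}) \hookrightarrow H^{1-\theta}_{q}(\R,v;[Y_{0},Y_{1}]_{\theta}).
\]
Complex interpolation of Bessel potential spaces with the fixed $A_{p}$-weight $w_{\gamma-\alpha p}$ on the half-space — which in the present scalar case follows from \eqref{DSOP:eq:prelim:complex_int_F} together with $H^{\sigma}_{p}=F^{\sigma}_{p,2}$ from \eqref{eq:identity_Bessel-Potential_Triebel-Lizorkin;scalar-valued} — then identifies
\[
[Y_{0},Y_{1}]_{\theta}=H^{(1-\theta)(-\theta\rho)+\theta(1-\theta)\rho}_{p}(\R^{d}_{+},w_{\gamma-\alpha p})=H^{0}_{p}(\R^{d}_{+},w_{\gamma-\alpha p})=L_{p}(\R^{d}_{+},w_{\gamma-\alpha p}),
\]
the interpolated smoothness vanishing identically in $\theta$. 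Intersecting with the second factor from the previous step yields \eqref{DSOP:eq:lemma:embd_MR_anisotropic}.

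I expect the main obstacle to be the reverse Sobolev embedding of the second paragraph: one must check that the duality argument of Lemma~\ref{DSOP:lemma:embd_Lp_into_neg_smoothness} goes through at the non-integer (and, for $Y_{0}$, negative) smoothness levels needed here, including the correct identification of the duals of the weighted Triebel--Lizorkin spaces on $\R^{d}_{+}$. The second, more structural point worth stressing is that the spatial trade must be carried out \emph{before} interpolating in time: the uniform loss $\alpha$ preserves the gap $\rho$ between the two spatial orders, and it is precisely this gap that forces the interpolated order $[Y_{0},Y_{1}]_{\theta}$ to be $0$ for the given $\theta$. Interpolating in time first, while keeping the weight $w_{\gamma}$, would instead require $\theta=\tfrac12$ and could not reach the general case.
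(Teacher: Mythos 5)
Your proposal follows the same two-step architecture as the paper's proof. Both arguments first shift the spatial scale into the $A_p$-range, replacing $F^{s}_{p,\infty}(\R^{d}_{+},w_{\gamma})$ and $F^{s+\rho}_{p,\infty}(\R^{d}_{+},w_{\gamma})$ by $H^{-\theta\rho}_{p}(\R^{d}_{+},w_{\gamma-(s+\theta\rho)p})$ and $H^{(1-\theta)\rho}_{p}(\R^{d}_{+},w_{\gamma-(s+\theta\rho)p})$ respectively, and then embed the resulting intersection into the complex interpolation space with parameter $\theta$, identified with $H^{1-\theta}_{q}(\R,v;L_{p}(\R^{d}_{+},w_{\gamma-(s+\theta\rho)p}))$ exactly as in your computation; for this second step the paper invokes \cite[Theorem~3.18]{LV2018_Dir_Laplace} together with \cite[Propositions~5.5 and 5.6]{Lindemulder&Meyries&Veraar2017}, which is what your appeal to the mixed-derivative embedding amounts to.

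The one place where you deviate --- the justification of the spatial shift --- is also the one genuine gap, and the detour you take there is unnecessary. The paper obtains $F^{s}_{p,\infty}(w_{\gamma})\hookrightarrow H^{-\theta\rho}_{p}(w_{\gamma-(s+\theta\rho)p})$ by a \emph{direct} application of the sharp Sobolev embedding \eqref{DSOP:eq:prelim:Sob_embd}, combined with $F^{\cdot}_{p,1}\hookrightarrow H^{\cdot}_{p}$ from \eqref{eq:relation_Bessel-Potential_Triebel-Lizorkin;Ap} (admissible since the shifted weight is in $A_{p}$). The direction you declare unavailable --- lowering both the smoothness and the weight exponent along $s-\gamma/p=\mathrm{const}$ --- is precisely the direction in which the Meyries--Veraar embedding holds and in which every application of \eqref{DSOP:eq:prelim:Sob_embd} in the paper's lemma proofs is made (compare \eqref{DSOP:eq:lemma:embd_Lp_into_neg_smoothness;2} and the first display in the proof of Lemma~\ref{DSOP:lemma:embd_anisotropic_MR}); the inequality $\gamma_{1}>\gamma_{0}$ in the displayed statement of \eqref{DSOP:eq:prelim:Sob_embd} is evidently a slip, since an embedding that raises smoothness while merely weakening the weight near the boundary cannot hold (weights do not control oscillation). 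Your duality route, by contrast, does not actually reduce the problem: dualizing $F^{\sigma+\alpha}_{p,\infty}(w_{\gamma})\hookrightarrow H^{\sigma}_{p}(w_{\gamma-\alpha p})$ produces a required embedding of exactly the same downward type on the dual scale (smoothness drops by $\alpha$, weight exponent by $\alpha p'$), so you are back where you started unless, as in Lemma~\ref{DSOP:lemma:embd_Lp_into_neg_smoothness}, you can descend to an \emph{integer} smoothness level inside the $A_{p'}$-range where the elementary embedding \eqref{DSOP:eq:prelim:emb_F_into_Lp} and the Hardy inequality of \cite[Corollary~3.4]{LV2018_Dir_Laplace} take over; in addition, $F^{\sigma+\alpha}_{p,\infty}$ is not reflexive, so the dualization itself requires care. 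Finally, your closing structural remark is not quite right: applying the same Sobolev embedding after interpolating in time would also land on $L_{p}(w_{\gamma-(s+\theta\rho)p})$ for the given $\theta$; the real reason the spatial step must come first is that $F^{s}_{p,\infty}(w_{\gamma})$ is not a UMD space, so the interpolation machinery is unavailable before the shift.
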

Note that $s+\theta\rho \in (\frac{1+\gamma}{p}-1,\frac{1+\gamma}{p})$ is equivalent to $\gamma-(s+\theta\rho)p \in (-1,p-1)$, which is in turn equivalent to $w_{\gamma-(s+\theta\rho)p} \in A_{p}$.

\begin{proof}
By Sobolev embedding \eqref{DSOP:eq:prelim:Sob_embd} we have
\begin{align*}
&W^{1}_{q}(\R,v;F^{s}_{p,\infty}(\R^{d}_{+},w_{\gamma})) \cap L_{q}(\R,v;F^{s+\rho}_{p,\infty}(\R^{d}_{+},w_{\gamma})) \nonumber \\
& \qquad\qquad \hookrightarrow W^{1}_{q}(\R,v;H^{-\theta\rho}_{p}(\R^{n}_{+},w_{\gamma-(s+\theta\rho)p}))
\cap L_{q}(\R,v;H^{(1-\theta)\rho}_{p}(\R^{n}_{+},w_{\gamma-(s+\theta\rho)p})). \label{DSOP:eq:lemma:embd_MR_anisotropic;1}
\end{align*}
Since
\begin{align*}
&W^{1}_{q}(\R,v;H^{-\theta\rho}_{p}(\R^{n}_{+},w_{\gamma-(s+\theta\rho)p}))
\cap L_{q}(\R,v;H^{(1-\theta)\rho}_{p}(\R^{n}_{+},w_{\gamma-(s+\theta\rho)p})). \\
& \qquad\qquad \hookrightarrow [W^{1}_{q}(\R,v;H^{-\theta\rho}_{p}(\R^{n}_{+},w_{\gamma-(s+\theta\rho)p})),
L_{q}(\R,v;H^{(1-\theta)\rho}_{p}(\R^{n}_{+},w_{\gamma-(s+\theta\rho)p}))]_{\theta} \\
& \qquad\qquad = H^{1-\theta}_{q}(\R,v;L_{p}(\R^{d}_{+},w_{\gamma-(s+\theta\rho)p}))
\end{align*}
as a consequence of \cite[Theorem~3.18]{LV2018_Dir_Laplace}, \cite[Proposition~5.6]{Lindemulder&Meyries&Veraar2017} and \cite[Proposition~5.5]{Lindemulder&Meyries&Veraar2017} (the half-space version of \eqref{DSOP:eq:prelim:H=W_UMD}), the desired inclusion follows.
\end{proof}

\begin{proof}[Proof of Theorem~\ref{DSOP:thm:trace_MR}]
By localization and rectification it suffices to treat the case $\dom = \R^{d}_{+}$.
It will be convenient to write
\begin{align*}
\M &:=
W^{1}_{q}(\R,v;F^{s}_{p,r}(\R^{d}_{+},w_{\gamma})) \cap  L_{q}(\R,v;F^{s+\rho}_{p,r}(\R^{d}_{+},w_{\gamma})), \\
\B &:= F^{\delta}_{q,p}(\R,v;L_{p}(\R^{d-1})) \cap L_{q}(\R,v;B^{\rho\delta}_{p,p}(\R^{d-1})).
\end{align*}

As $s < \frac{1+\gamma}{p} < s+\rho$, we have $s < \frac{1+\gamma}{p}$ and $s+\rho > \max\{0,\frac{1+\gamma}{p}-1\}$ and there exists $\theta \in [0,1]$ such that $s+\theta\rho \in (0,\infty) \cap (\frac{1+\gamma}{p}-1,\frac{1+\gamma}{p})$. So the inclusion \eqref{DSOP:eq:lemma:embd_MR_anisotropic} from Lemma~\ref{DSOP:lemma:embd_MR_anisotropic} is valid. Combing this inclusion with an elementary inclusion and the trace theory from \cite[Theorem~2.1 $\&$ Corollary~4.9]{Lindemulder2017_max-reg} (also see \cite[Theorem~4.4]{Lindemulder2017_max-reg}) and observing that $(1-\theta)-\frac{1+(\gamma-(s+\theta\rho)p)}{\rho p} =\delta > 0$, we find that $\mathrm{tr}_{\partial\dom}$ is a bounded operator
\begin{align*}
\mathrm{tr}_{\partial\R^{d}_{+}}:\M \longra F^{\delta}_{q,p}(\R,v;L_{p}(\R^{d-1})) \cap L_{q}(\R,v;B^{\rho\delta}_{p,p}(\R^{d-1})).
\end{align*}

Let $s_{+}$, $\gamma_{+}$, $\sigma$, $\delta$ and $\eta$ be as in Lemma~\ref{DSOP:eq:lemma:embd_anisotropic_MR}. Observing that $(\sigma+\frac{\eta}{\rho})-\frac{1+(\gamma_{+}+\eta p)}{\rho p} = \sigma - \frac{1+\gamma_{+}}{\rho p} = \delta$, \cite[Theorem~2.1 $\&$ Corollary~4.9]{Lindemulder2017_max-reg} gives a bounded linear operator
\begin{align*}
E:\B \longra F^{\sigma,(\frac{1}{\rho},1)}_{(p,q),1}(\R^{d}_{+} \times \R,(w_{\gamma_{+}},v)) \cap F^{\sigma+\frac{\eta}{\rho},(\frac{1}{\rho},1)}_{(p,q),1}(\R^{d}_{+} \times \R,(w_{\gamma_{+}+\eta p},v))
\stackrel{\eqref{DSOP:eq:lemma:embd_anisotropic_MR}}{\hookrightarrow} \M
\end{align*}
that acts a coretraction for $\mathrm{tr}_{\partial\R^{d}_{+}}$.
\end{proof}

\section{$L_{q,\mu}$-$F^{s}_{p,r,\gamma}$-maximal Regularity}\label{DSOP:sec:max-reg}

\subsection{Assumptions on the Differential Operator}\label{DBVP:sec:assump:operator}

Let $\mathscr{O}$ be either $\R^{d}_{+}$ or a $C^{\infty}$-domain in $\R^{d}$ with a compact boundary $\partial \mathscr{O}$.
In this section we consider second order differential operators $\mathcal{A}(\,\cdot\,,D) = \sum_{|\alpha| \leq 2}D^{\alpha}$ on $\mathscr{O}$ with $a_{\alpha} \in BUC(\mathscr{O})$ for $|\alpha|=2$ and $a_{\alpha} \in \mathcal{D}'(\mathscr{O})$ for $|\alpha| \leq 1$. We will impose an ellipticity condition and certain smoothness conditions on the coefficients, which we describe below.

The ellipticity condition reads as follows:
\begin{itemize}
\item[$(\mathrm{E})$] There exists a constant $\kappa > 0$, called the \emph{constant of ellipticity}, such that for all $x,\xi \in \R^{d}$,
    \[
    \kappa^{-1}|\xi|^{2} \geq \sum_{|\alpha| = 2}a_{\alpha}(x)\xi^{\alpha} \geq \kappa|\xi|^{2}.
    \]
\end{itemize}

In order to formulate the smoothness condition on the coefficient, let $p,r \in (1,\infty)$, $\gamma \in (-1,\infty)$ and $s \in \R$.
\begin{itemize}
\item[$(\mathrm{S})^{s}_{p,r,\gamma}$] There exist $\kappa > \sigma_{s,p,\gamma}$ and $\vartheta \in (0,1)$ such that:
    \begin{enumerate}[(i)]
    \item\label{DSOE:it:S_top_order_coeff} $a_{\alpha} \in B^{\kappa}_{\infty,1}(\mathscr{O})$ for each $|\alpha|=2$;
    \item\label{DSOE:it:S_lower_order_coeff} $a_{\alpha}D^{\alpha} \in \mathcal{B}(F^{s+2\vartheta}_{p,q,\gamma}(\mathscr{O}),F^{s}_{p,q,\gamma}(\mathscr{O}))$ for each $|\alpha| \leq 1$.
    \end{enumerate}
    Moreover, there are the limits at infinity $a_{\alpha}(\infty) := \lim_{|x| \to \infty}a_{\alpha}(x)$ for each $|\alpha|=2$.
\end{itemize}

\begin{ex}\label{DSOE:ex:smoothness_cond}
Two explicit examples for the condition \eqref{DSOE:it:S_lower_order_coeff} in $(\mathrm{S})^{s}_{p,r,\gamma}$ on the lower order part are the following:
\begin{enumerate}[(a)]
\item There exists $\kappa_{i} > \sigma_{s+2-i,s,p,\gamma}$, $i=0,1$, such $a_{\alpha} \in B^{\kappa_{|\alpha|}}_{\infty,1}(\mathscr{O})$ for each $|\alpha| \leq 1$.
\item $a_{\alpha} \in L_{\infty}(\mathscr{O})$ for each $|\alpha| \leq 1$ in case $s=0$, $q=2$ and $\gamma \in (-1,p-1)$ (so that $F^{s}_{p,q,\gamma}(\mathscr{O}) = F^{0}_{p,2,\gamma}(\mathscr{O}) = L_{p,\gamma}(\mathscr{O})$).
\end{enumerate}
\end{ex}

\subsection{The $L_{q,\mu}$-$F^{s}_{p,r,\gamma}$-maximal Regularity Problem}

Let us first introduce some notation.
Let $\dom$ be either $\R^{d}_{+}$ or a $C^{\infty}$-domain in $\R^{d}$ with a compact boundary $\partial\dom$. Let $q,p,r\in (1, \infty)$, $v\in A_q(\R)$ and $\gamma\in (-1, \infty)$. For an interval $J \subset \R$ we set $\mathbb{D}^{q,p,r}_{v,\gamma,s}(J) := L_q(J, v;F^{s}_{p,r,\gamma}(\dom))$,
\[
\M^{q,p,r}_{v,\gamma,s}(J) := W^{1}_{q}(J,v;F^{s}_{p,r,\gamma}(\dom))\cap L_{q}(J,v;F^{s+2}_{p,r,\gamma}(\dom))
\]
and
\[
\B^{q,p,r}_{v,\gamma,s}(J) := F^{1+\frac{s}{2}-\frac{1}{2}\frac{1+\gamma}{p}}_{q,p}(J,v;L_{p}(\partial\dom)) \cap
L_{q}(J,v;B^{2+s-\frac{1+\gamma}{p}}_{p,p}(\partial\dom)).
\]
For the power weight $v=v_{\mu}$, with $\mu \in (-1,q-1)$, we simply replace $v$ by $\mu$ in the subscripts: $\mathbb{D}^{q,p,r}_{\mu,\gamma,s}(J) := \mathbb{D}^{q,p,r}_{v_{\mu},\gamma,s}(J)$, $\M^{q,p,r}_{\mu,\gamma,s}(J) := \M^{q,p,r}_{v_{\mu},\gamma,s}(J)$ and $\B^{q,p,r}_{\mu,\gamma,s}(J)  := \B^{q,p,r}_{v_{\mu},\gamma,s}(J)$.

\begin{thm}\label{DSOP:thm:main;real_line}
Let $\dom$ be either $\R^{d}_{+}$ or a $C^{\infty}$-domain in $\R^{d}$ with a compact boundary $\partial\dom$.
Let $q,p,r \in (1, \infty)$, $v\in A_q(\R)$, $\gamma\in (-1,\infty)$ and $s \in (\frac{1+\gamma}{p}-2,\frac{1+\gamma}{p})$.
Suppose that $\mathcal{A}(\,\cdot\,,D) = \sum_{|\alpha| \leq 2}a_{\alpha}D^{\alpha}$ satisfies the smoothness condition $(\mathrm{S})^{s}_{p,r,\gamma}$ and the ellipticity condition $(\mathrm{E})$.
Then there exists $\lambda_{0} \in \R$ such that for all $\lambda \geq \lambda_{0}$
\begin{equation}\label{DSOP:eq:thm:main;real_line;isom}
\M^{q,p,r}_{v,\gamma,s}(\R) \longra \mathbb{D}^{q,p,r}_{v,\gamma,s}(\R) \oplus \B^{q,p,r}_{v,\gamma,s}(\R),\,u \mapsto (u'+(\lambda+\mathcal{A}(\,\cdot\,,D))u,\tr_{\partial\dom}u)
\end{equation}
defines an isomorphism of Banach spaces; in particular, for all $\lambda \geq \lambda_{0}$, $f \in \mathbb{D}^{q,p,r}_{v,\gamma,s}(\R)$ and $g \in \B^{q,p,r}_{v,\gamma,s}(\R)$, there exists a unique solution $u \in \M^{q,p,r}_{v,\gamma,s}(\R)$ of the parabolic boundary value problem
\begin{align*}
\left\{\begin{array}{rl}
u' + (\lambda+\mathcal{A}(\,\cdot\,,D)) u &= f, \\
\mathrm{tr}_{\partial\dom}u &=g. \\
\end{array}\right.
\end{align*}
\end{thm}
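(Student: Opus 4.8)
The plan is to reduce the inhomogeneous problem to one with vanishing boundary data and then to invoke the maximal regularity of the Dirichlet realization of $\mathcal{A}(\,\cdot\,,D)$ on the base space $X := F^{s}_{p,r,\gamma}(\dom)$. The hypothesis $s \in (\frac{1+\gamma}{p}-2,\frac{1+\gamma}{p})$ is precisely $s < \frac{1+\gamma}{p} < s+2$, so Theorem~\ref{DSOP:thm:trace_MR} applies with $\rho = 2$ (for which $\delta = \frac{s}{2}+1-\frac12\frac{1+\gamma}{p}$ and $\rho\delta = 2+s-\frac{1+\gamma}{p}$): the trace $\tr_{\partial\dom}$ is a retraction from $\M^{q,p,r}_{v,\gamma,s}(\R)$ onto $\B^{q,p,r}_{v,\gamma,s}(\R)$, and I fix a bounded coretraction $E:\B^{q,p,r}_{v,\gamma,s}(\R) \to \M^{q,p,r}_{v,\gamma,s}(\R)$ with $\tr_{\partial\dom}\circ E = \mathrm{id}$.

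First I would check that the map in \eqref{DSOP:eq:thm:main;real_line;isom} is bounded, so that only a two-sided inverse must be produced. The part $u \mapsto u'$ is bounded from $\M^{q,p,r}_{v,\gamma,s}(\R)$ into $\mathbb{D}^{q,p,r}_{v,\gamma,s}(\R)$ by definition of the maximal regularity space, and $\tr_{\partial\dom}$ is bounded by Theorem~\ref{DSOP:thm:trace_MR}. For the spatial part it suffices, pointwise in time, that $(\lambda + \mathcal{A}(\,\cdot\,,D)):F^{s+2}_{p,r,\gamma}(\dom) \to F^{s}_{p,r,\gamma}(\dom)$ is bounded: for $|\alpha|=2$ this follows by combining $D^{\alpha}:F^{s+2}_{p,r,\gamma}(\dom)\to F^{s}_{p,r,\gamma}(\dom)$ with pointwise multiplication by $a_{\alpha}\in B^{\kappa}_{\infty,1}(\dom)$ on $F^{s}_{p,r,\gamma}(\dom)$, and for $|\alpha|\le 1$ it follows from the embedding $F^{s+2}_{p,r,\gamma}(\dom)\hookrightarrow F^{s+2\vartheta}_{p,r,\gamma}(\dom)$ (as $\vartheta<1$) together with condition (ii) of $(\mathrm{S})^{s}_{p,r,\gamma}$.

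Next I would introduce the Dirichlet realization $\mathcal{A}_{\Dir}$ of $\mathcal{A}(\,\cdot\,,D)$ on $X$ with domain $D(\mathcal{A}_{\Dir}) = \{u \in F^{s+2}_{p,r,\gamma}(\dom) : \tr_{\partial\dom}u = 0\}$. Under the conditions $(\mathrm{E})$ and $(\mathrm{S})^{s}_{p,r,\gamma}$, the companion paper \cite{Lindemulder2018_elliptic_weighted_B&F} provides a $\lambda_{0}\in\R$ such that for every $\lambda\ge\lambda_{0}$ one has $0\in\rho(\lambda+\mathcal{A}_{\Dir})$ and $\lambda+\mathcal{A}_{\Dir}$ admits a bounded $H^{\infty}$-calculus on the UMD space $X$ with angle $\omega_{H^{\infty}}(\lambda+\mathcal{A}_{\Dir})<\frac{\pi}{2}$. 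The Proposition of Section~\ref{DSOP:subsec:prelim:UMD_max-reg} then yields $L_{q}(v,\R)$-maximal regularity, i.e.\ $\frac{d}{dt}+\lambda+\mathcal{A}_{\Dir}$ is an isomorphism from $W^{1}_{q}(\R,v;X)\cap L_{q}(\R,v;D(\mathcal{A}_{\Dir}))$ onto $L_{q}(\R,v;X)$. The decisive identification is that this domain equals the kernel $\{u\in\M^{q,p,r}_{v,\gamma,s}(\R):\tr_{\partial\dom}u=0\}$ and the target equals $\mathbb{D}^{q,p,r}_{v,\gamma,s}(\R)$; hence $u\mapsto u'+(\lambda+\mathcal{A}(\,\cdot\,,D))u$ is an isomorphism from $\ker\tr_{\partial\dom}$ onto $\mathbb{D}^{q,p,r}_{v,\gamma,s}(\R)$.

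Then I would assemble the inverse: given $(f,g)$, put $w:=Eg$ and $\tilde{f}:=f-w'-(\lambda+\mathcal{A}(\,\cdot\,,D))w\in\mathbb{D}^{q,p,r}_{v,\gamma,s}(\R)$, solve $u_{0}'+(\lambda+\mathcal{A}_{\Dir})u_{0}=\tilde{f}$ for the unique $u_{0}\in\ker\tr_{\partial\dom}$, and set $u:=w+u_{0}$. Then $u$ solves the system and $(f,g)\mapsto u$ is bounded as a composition of bounded maps; uniqueness is immediate, since a solution of the homogeneous system lies in $\ker\tr_{\partial\dom}$ and is therefore $0$. \textbf{The main obstacle} is not this assembly but the two imported ingredients in the non-$A_{p}$ weighted Triebel--Lizorkin setting: the bounded $H^{\infty}$-calculus of angle below $\frac{\pi}{2}$ for $\lambda+\mathcal{A}_{\Dir}$ and the exact description of $D(\mathcal{A}_{\Dir})$, both from \cite{Lindemulder2018_elliptic_weighted_B&F}. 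The care required is to verify that the parameter ranges and the conditions $(\mathrm{E})$, $(\mathrm{S})^{s}_{p,r,\gamma}$ match the hypotheses of that calculus, and that the reduction genuinely keeps $\tilde f$ in $\mathbb{D}^{q,p,r}_{v,\gamma,s}(\R)$, which again relies on condition (ii) of $(\mathrm{S})^{s}_{p,r,\gamma}$.
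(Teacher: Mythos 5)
Your proposal is correct and takes essentially the same route as the paper: boundedness from the smoothness condition and Theorem~\ref{DSOP:thm:trace_MR} with $\rho=2$, maximal regularity of the Dirichlet realization via the $H^{\infty}$-calculus result of \cite{Lindemulder2018_elliptic_weighted_B&F} and the Dore--Venni-type proposition, and the solution $u = Eg + \mathscr{T}(\lambda)\bigl(f-(\partial_{t}+\lambda+\mathcal{A}(\,\cdot\,,D))Eg\bigr)$. Your version of the correction term, which includes the time derivative of $Eg$, is in fact the right one (the paper's displayed formula for $\mathscr{S}(\lambda)$ omits $\partial_{t}$ in the last term, evidently a typo).
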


\begin{remark}\label{DSOP:rmk:thm:main;real_line;lambda0}
In case that $\dom$ is bounded,
\cite[Corollary~9.7 and Example~9.8]{Lindemulder2018_elliptic_weighted_B&F} provides some information on $\lambda_{0}$ in Theorem~\ref{DSOP:thm:main;real_line}.
In particular, $\lambda_{0} \in \R$ can be chosen independently of $q,p,r$, $v$, $\gamma$ and $s$ (for which the smoothness condition $(\mathrm{S})^{s}_{p,r,\gamma}$ is satisfied).
Moreover, if $\dom$ is bounded and the operator $\mathcal{A}(\,\cdot\,,D)$ is in divergence form $\mathcal{A}(\,\cdot\,,D) = \sum_{i,j=1}^{d}D_{j}(a^{i,j}D_{i})$ with $a^{i,j} \in BC^{\infty}(\mathscr{O})$, we can take $\lambda_{0}=0$ (or even a certain $\lambda_{0} < 0$).
\end{remark}

\begin{proof}
The required boundedness of the mapping $u \mapsto (u'+(\lambda-\Delta)u,\tr_{\partial\dom}u)$ follows from the smoothness condition $(\mathrm{S})^{s}_{p,r,\gamma}$ and Theorem~\ref{DSOP:thm:trace_MR} (with $\rho=2$). So it suffices to show that this mapping is injective with a bounded right-inverse.

Let $A$ denote the realization of $\mathcal{A}(\,\cdot\,,D)$ in $F^{s}_{p,r,\gamma}(\mathscr{O})$ with domain $D(A) = F^{s+2}_{p,r,\gamma}(\mathscr{O}) = \{ u \in F^{s+2}_{p,r,\gamma,\Dir}(\mathscr{O}) : \mathrm{tr}_{\partial\dom}u = 0\}$. By \cite[Theorem~9.3]{Lindemulder2018_elliptic_weighted_B&F}, there exists $\lambda_{0} \in \R$ such that $0 \in \rho(\lambda_{0}+A)$ and $\lambda_{0}+A$ has a bounded $H^{\infty}$-calculus with angle $\omega_{H^{\infty}}(\lambda_{0}+A) < \frac{\pi}{2}$.
Since $F^{s}_{p,r,\gamma}(\mathscr{O})$ is a UMD space, it follows (see Section~\ref{DSOP:subsec:prelim:UMD_max-reg}) that, for every $\lambda \geq \lambda_{0}$,
\begin{equation}\label{DSOP:eq:thm:main;real_line;isom}
W^{1}_{q}(\R,v;F^{s}_{p,r,\gamma}(\mathscr{O})) \cap L_{q}(\R,v;F^{s+2}_{p,r,\gamma,\Dir}(\mathscr{O})) \longra L_{q}(\R,v;F^{s}_{p,r,\gamma}(\mathscr{O})),\, u \mapsto u'+(\lambda+A)u,
\end{equation}
defines an isomorphism of Banach spaces.
In particular, the mapping in \eqref{DSOP:eq:thm:main;real_line;isom} is injective.

Denoting by $\mathscr{T}(\lambda)$ the inverse corresponding to \eqref{DSOP:eq:thm:main;real_line;isom},
by $\ext_{\partial\dom}$ a coretraction for $\mathrm{tr}_{\partial\dom} \in \mathcal{B}(\M^{q,p,r}_{v,\gamma,s}(\R),\B^{q,p,r}_{v,\gamma,s}(\R))$ (see Theorem~\ref{DSOP:thm:trace_MR}), we find that
\[
\mathscr{S}(\lambda): \mathbb{D}^{q,p,r}_{v,\gamma,s}(\R) \oplus \B^{q,p,r}_{v,\gamma,s}(\R) \longra
\M^{q,p,r}_{v,\gamma,s}(\R)
\]
given by
\[
\mathscr{S}(\lambda)(f,g) := \mathscr{T}(\lambda)f + \ext_{\partial\dom}g - \mathscr{T}(\lambda)(\lambda+\mathcal{A}(\,\cdot\,,D))\ext_{\partial\dom}g
\]
defines a bounded right-inverse for the mapping in \eqref{DSOP:eq:thm:main;real_line;isom}.
\end{proof}

As a consequence of the above theorem we obtain the following corresponding result, Theorem~\ref{DSOP:thm:main;half-line}, on time intervals $J=(0,T)$ with $T \in (0,\infty]$ in the case of the power weight $v=v_{\mu}$ (with $\mu \in (-1,q-1)$), where we need to take initial values into account.

For the initial data we need to introduce the space
\[
\mathbb{I}^{q,p}_{\mu,\gamma,s} := B^{2(1+\frac{s}{2}-\frac{1+\mu}{q})}_{p,q}(\dom,w^{\partial\dom}_{\gamma}).
\]
\begin{lemma}\label{DSOP:lem:thm:main;half-line;temp_trace}
Let $\dom$ be either $\R^{d}_{+}$ or a $C^{\infty}$-domain in $\R^{d}$ with a compact boundary $\partial\dom$ and let $J=(0,T)$ with $T \in (0,\infty]$.
Let $q,p,r \in (1, \infty)$, $\mu \in (-1,q-1)$, $\gamma\in (-1,\infty)$ and $s \in (\frac{1+\gamma}{p}-2,\frac{1+\gamma}{p})$. Then $\tr_{t=0}:u \mapsto u(0)$ is a retraction
\[
\tr_{t=0}: \M^{q,p,r}_{\mu,\gamma,s}(J) \longra \mathbb{I}^{q,p}_{\mu,\gamma,s}.
\]
\end{lemma}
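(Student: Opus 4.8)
The plan is to identify $\M^{q,p,r}_{\mu,\gamma,s}(J)$ as an abstract intersection space and to appeal to the known theory of temporal traces with power weights. Writing $X_{0} := F^{s}_{p,r,\gamma}(\dom)$ and $X_{1} := F^{s+2}_{p,r,\gamma}(\dom)$, we have the continuous (dense) embedding $X_{1} \hookrightarrow X_{0}$, and
\[
\M^{q,p,r}_{\mu,\gamma,s}(J) = W^{1}_{q}(J,v_{\mu};X_{0}) \cap L_{q}(J,v_{\mu};X_{1}).
\]
The abstract temporal-trace theorem for such spaces (see e.g.\ \cite{Lindemulder2017_max-reg,Meyries&Veraar2014_traces}) states that, since $\mu \in (-1,q-1)$ forces $\frac{1+\mu}{q} \in (0,1)$, the operator $\tr_{t=0}:u \mapsto u(0)$ is a retraction from $W^{1}_{q}(\R_{+},v_{\mu};X_{0}) \cap L_{q}(\R_{+},v_{\mu};X_{1})$ onto the real interpolation space $(X_{0},X_{1})_{1-\frac{1+\mu}{q},q}$. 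Thus, modulo the computation of this interpolation space and the passage from $\R_{+}$ to $J$, the lemma follows.

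First I would treat the case $J=\R_{+}$ and then reduce the finite-interval case $J=(0,T)$ to it. For boundedness of $\tr_{t=0}$ on $\M^{q,p,r}_{\mu,\gamma,s}((0,T))$ one extends $u$ to $\R_{+}$ by a bounded extension operator in time that preserves the value at $t=0$ (a higher-order reflection followed by a cut-off), and then invokes the $\R_{+}$-result. For the coretraction, one composes the $\R_{+}$-coretraction with a smooth cut-off supported near $t=0$; this leaves the initial value unchanged while producing an element of $\M^{q,p,r}_{\mu,\gamma,s}((0,T))$, using that pointwise multiplication by a smooth compactly supported function in time is bounded on the maximal regularity space.

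The main computation is the identification of the interpolation space. Using the real interpolation identity for weighted Triebel--Lizorkin spaces differing only in their smoothness index,
\[
(F^{s}_{p,r}(\dom,w^{\partial\dom}_{\gamma}),F^{s+2}_{p,r}(\dom,w^{\partial\dom}_{\gamma}))_{\theta,q} = B^{s+2\theta}_{p,q}(\dom,w^{\partial\dom}_{\gamma}), \qquad \theta \in (0,1),
\]
with $\theta = 1-\frac{1+\mu}{q}$, and observing that
\[
s+2\theta = s+2-2\tfrac{1+\mu}{q} = 2\Big(1+\tfrac{s}{2}-\tfrac{1+\mu}{q}\Big),
\]
we obtain exactly $(X_{0},X_{1})_{1-\frac{1+\mu}{q},q} = \mathbb{I}^{q,p}_{\mu,\gamma,s}$. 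The interpolation identity on $\R^{d}$ is classical in the $A_{\infty}$-weighted setting, and it transfers to the domain $\dom$ by the same retraction--coretraction argument (via Rychkov's extension operator) that underlies the complex interpolation result \eqref{DSOP:eq:prelim:complex_int_F}.

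The step I expect to require most care is the abstract temporal-trace theorem in the required generality: one must check that the time weight $v_{\mu}$ is admissible (guaranteed by $\mu \in (-1,q-1)$) and that both the trace estimate and the bounded coretraction hold for the specific target spaces $X_{0},X_{1}$ arising here. Since $F^{s}_{p,r,\gamma}(\dom)$ and $F^{s+2}_{p,r,\gamma}(\dom)$ are UMD spaces in the reflexive range, these hypotheses are met, and the remaining points—the interpolation computation and the finite-interval reduction—are then routine.
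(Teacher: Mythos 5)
Your proposal is correct and follows essentially the same route as the paper: the paper likewise invokes the abstract temporal trace theorem of \cite[Theorem~1.1]{Meyries&Veraar2014_traces}/\cite[Theorem~3.4.8]{Pruess&Simonett2016_book} (applied with the realization of $\Delta$ on $F^{s}_{p,r}(\R^{d},w^{\partial\dom}_{\gamma})$ with domain $F^{s+2}_{p,r}(\R^{d},w^{\partial\dom}_{\gamma})$ supplying the required operator whose domain is $X_{1}$) and identifies the trace space via the real interpolation identity $(F^{s}_{p,r},F^{s+2}_{p,r})_{1-\frac{1+\mu}{q},q}=B^{2(1+\frac{s}{2}-\frac{1+\mu}{q})}_{p,q}$, concluding by an extension--restriction argument. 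The only cosmetic difference is the order of reductions (the paper first treats $\dom=\R^{d}$ and then restricts in space, whereas you work on $\dom$ and reduce $J$ to $\R_{+}$ in time), which does not affect the substance.
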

\begin{proof}
The corresponding result for $\dom = \R^{d}$ can be derived from \cite[Theorem~1.1]{Meyries&Veraar2014_traces}/\cite[Theorem~3.4.8]{Pruess&Simonett2016_book} using the realization of $\Delta$ on $F^{s}_{p,r}(\R^{d},w^{\partial\dom}_{\gamma})$ with domain $F^{s+2}_{p,r}(\R^{d},w^{\partial\dom}_{\gamma})$ (see \cite{Lindemulder2018_elliptic_weighted_B&F}), from which the desired result follows by an extension and restriction argument.
\end{proof}

Concerning the compatibility condition in the space of initial-boundary data $\mathbb{IB}^{q,p}_{\mu,\gamma,s}(J)$ below, let us note the following. Assume $1+\frac{s}{2}-\frac{1+\mu}{q} > \frac{1}{2}\frac{1+\gamma}{p}$.
Then, on the one hand, there is a well-defined trace operator $\mathrm{tr}_{\partial\dom}$ on $\mathbb{I}^{q,p}_{\mu,\gamma,s}(J)$; in fact, $\mathrm{tr}_{\partial\dom}$ is a retraction from $\mathbb{I}^{q,p}_{\mu,\gamma,s}$ to $B^{2(1+\frac{s}{2}-\frac{1+\mu}{q})-\frac{1+\gamma}{p}}_{p,q}(\partial\dom;X)$.
On the other hand, as a consequence of \cite[Theorem~1.1]{Meyries&Veraar2014_traces}, $\mathrm{tr}_{t=0}:g \mapsto g(0)$ is a well-defined retraction from $\mathbb{B}^{q,p,r}_{\mu,\gamma,s}(J)$ to $B^{2(1+\frac{s}{2}-\frac{1+\mu}{q})-\frac{1+\gamma}{p}}_{p,q}(\partial\dom;X)$.
Motivated by this we set
\begin{align*}
\mathbb{IB}^{q,p,r}_{\mu,\gamma,s}(J) := \left\{ (g,u_{0}) \in \mathbb{B}^{q,p,r}_{\mu,\gamma,s}(J) \oplus \mathbb{I}^{q,p}_{\mu,\gamma,s} : g(0) = \mathrm{tr}_{\partial\mathscr{O}}u_{0} \:\text{when $1+\frac{s}{2}-\frac{1+\mu}{q} > \frac{1}{2}\frac{1+\gamma}{p}$} \right\}.
\end{align*}

Now we can state the main result for the initial value problem with inhomogeneous boundary condition.
\begin{thm}\label{DSOP:thm:main;half-line}
Let $\dom$ be either $\R^{d}_{+}$ or a $C^{\infty}$-domain in $\R^{d}$ with a compact boundary $\partial\dom$ and let $J=(0,T)$ with $T \in (0,\infty]$.
Let $q,p,r \in (1, \infty)$, $\mu \in (-1,q-1)$, $\gamma\in (-1,\infty)$ and $s \in (\frac{1+\gamma}{p}-2,\frac{1+\gamma}{p})$ be such that $1+\frac{s}{2}-\frac{1+\mu}{q} \neq \frac{1}{2}\frac{1+\gamma}{p}$.
Suppose that $\mathcal{A}(\,\cdot\,,D) = \sum_{|\alpha| \leq 2}a_{\alpha}D^{\alpha}$ satisfies the smoothness condition $(\mathrm{S})^{s}_{p,r,\gamma}$ and the ellipticity condition $(\mathrm{E})$.
Then there exists $\lambda_{0} \in [-\infty,\infty)$ such that for all $\lambda \in [\lambda_{0},\infty)$,
\begin{align*}
\M^{q,p,r}_{\mu,\gamma,s}(J) \longra \mathbb{D}^{q,p,r}_{\mu,\gamma,s}(J) \oplus \mathbb{IB}^{q,p,r}_{\mu,\gamma,s}(J),\: u \mapsto (u'+(\lambda+\mathcal{A}(\,\cdot\,,D))u,\tr_{\partial\dom}u,u(0))
\end{align*}
defines an isomorphism of Banach spaces;
in particular, for all $\lambda \geq \lambda_{0}$, $f \in \mathbb{D}^{q,p,r}_{\mu,\gamma,s}(J)$ and $g \in \B^{q,p,r}_{\mu,\gamma,s}(J)$, there exists a unique solution $u \in \M^{q,p,r}_{\mu,\gamma,s}(J)$ of the parabolic initial-boundary value problem
\begin{equation*}\label{eq:thm:max-reg_bdd_domain_bd-data;interval;PBVP}
\left\{\begin{array}{rl}
u' + (\lambda+\mathcal{A}(\,\cdot\,,D)) u &= f, \\
\mathrm{tr}_{\partial\dom}u &=g, \\
u(0) &= u_{0}.
\end{array}\right.
\end{equation*}
\end{thm}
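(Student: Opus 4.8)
The plan is to reduce the initial--boundary value problem on $J$ to the full-line isomorphism of Theorem~\ref{DSOP:thm:main;real_line}, first removing the initial value and then extending the resulting problem by zero in time. Throughout $v=v_{\mu}$ with $\mu\in(-1,q-1)$, so that $v_{\mu}\in A_{q}(\R)$ and Theorem~\ref{DSOP:thm:main;real_line} applies; I treat $J=\R_{+}$ first and pass to finite $T$ at the end. Given $(f,g,u_{0})\in \mathbb{D}^{q,p,r}_{\mu,\gamma,s}(\R_{+})\oplus\mathbb{IB}^{q,p,r}_{\mu,\gamma,s}(\R_{+})$, I use a coretraction of $\tr_{t=0}$ from Lemma~\ref{DSOP:lem:thm:main;half-line;temp_trace} to choose $w\in\M^{q,p,r}_{\mu,\gamma,s}(\R_{+})$ with $w(0)=u_{0}$ and pass to $\tilde{u}:=u-w$. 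Then $u$ solves the problem if and only if $\tilde{u}$ solves it with data $\tilde{f}:=f-(w'+(\lambda+\mathcal{A})w)\in\mathbb{D}^{q,p,r}_{\mu,\gamma,s}(\R_{+})$ (using $(\mathrm{S})^{s}_{p,r,\gamma}$) and $\tilde{g}:=g-\tr_{\partial\dom}w\in\B^{q,p,r}_{\mu,\gamma,s}(\R_{+})$ (using Theorem~\ref{DSOP:thm:trace_MR}), and vanishing initial value $\tilde{u}(0)=0$. Writing $\delta:=1+\frac{s}{2}-\frac{1}{2}\frac{1+\gamma}{p}$ for the temporal smoothness of $\B$, in the range $\delta>\frac{1+\mu}{q}$, where the temporal trace on $\B$ exists, the compatibility built into $\mathbb{IB}$ forces $\tilde{g}(0)=g(0)-\tr_{\partial\dom}(w(0))=g(0)-\tr_{\partial\dom}u_{0}=0$, where I use the commutation $\tr_{t=0}\circ\tr_{\partial\dom}=\tr_{\partial\dom}\circ\tr_{t=0}$ on $\M^{q,p,r}_{\mu,\gamma,s}(\R_{+})$ (verified on a dense class and extended by continuity).

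\textbf{Zero-extension and solution on $\R$.} Since $\mu\in(-1,q-1)$, extension by zero in time $E_{0}$ is bounded from $\{v\in\M^{q,p,r}_{\mu,\gamma,s}(\R_{+}):v(0)=0\}$ into $\M^{q,p,r}_{\mu,\gamma,s}(\R)$ and from $\mathbb{D}^{q,p,r}_{\mu,\gamma,s}(\R_{+})$ into $\mathbb{D}^{q,p,r}_{\mu,\gamma,s}(\R)$; because the temporal trace of $\tilde{g}$ vanishes, $E_{0}$ is also bounded from the relevant subspace of $\B^{q,p,r}_{\mu,\gamma,s}(\R_{+})$ into $\B^{q,p,r}_{\mu,\gamma,s}(\R)$, the assumption $1+\frac{s}{2}-\frac{1+\mu}{q}\neq\frac{1}{2}\frac{1+\gamma}{p}$ excluding the critical value $\delta=\frac{1+\mu}{q}$ at which this fails on the weighted Triebel--Lizorkin time component. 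Applying Theorem~\ref{DSOP:thm:main;real_line} to $(E_{0}\tilde{f},E_{0}\tilde{g})$ produces a unique $\bar{u}\in\M^{q,p,r}_{\mu,\gamma,s}(\R)$. As $\mathcal{A}(\,\cdot\,,D)$ is autonomous in time, the inverse in Theorem~\ref{DSOP:thm:main;real_line} is causal — being built from convolution against the forward-in-time semigroup together with the (causal) parabolic boundary-to-solution operator — so, the data vanishing on $(-\infty,0)$, $\bar{u}$ vanishes there as well. Hence $u:=w+\bar{u}|_{\R_{+}}\in\M^{q,p,r}_{\mu,\gamma,s}(\R_{+})$ satisfies $u(0)=u_{0}$ and solves the original problem, and $(f,g,u_{0})\mapsto u$ is a bounded right inverse. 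For uniqueness, a solution with zero data has $w=0$ and $u(0)=0$, so $E_{0}u\in\M^{q,p,r}_{\mu,\gamma,s}(\R)$ solves the homogeneous full-line problem and vanishes by injectivity of the $\R$-map; thus $u=0$.

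\textbf{Finite intervals and $\lambda_{0}$.} For $T<\infty$ I pass from $\R_{+}$ to $(0,T)$ by restriction and a bounded extension operator in the converse direction; as $v_{\mu}(t)=t^{\mu}$ degenerates only at $t=0$, this is routine. The substitution $u\mapsto e^{-\lambda t}u$ conjugates the equation for a given spectral parameter to that for any other, and on a bounded interval the factors $e^{\pm\lambda t}$ are bounded above and below; hence on finite $J$ every $\lambda\in\R$ is admissible, i.e.\ one may take $\lambda_{0}=-\infty$, in agreement with Remark~\ref{DSOP:rmk:thm:main;real_line;lambda0}.

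The main obstacle is the boundedness of $E_{0}$ on the boundary-data space $\B^{q,p,r}_{\mu,\gamma,s}$: its temporal component is a weighted Triebel--Lizorkin space $F^{\delta}_{q,p}(\R_{+},v_{\mu};L_{p}(\partial\dom))$, for which zero-extension is bounded precisely when $\delta\neq\frac{1+\mu}{q}$ and, in the supercritical range $\delta>\frac{1+\mu}{q}$, only after the temporal trace has been annihilated, which is exactly what the compatibility condition defining $\mathbb{IB}^{q,p,r}_{\mu,\gamma,s}$ provides. Making this matching precise at the critical exponent, together with the causality statement used to recover $u(0)=u_{0}$ from the restricted full-line solution, are the points demanding the most care.
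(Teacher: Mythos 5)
Your overall architecture --- remove the initial value with a coretraction of $\tr_{t=0}$ from Lemma~\ref{DSOP:lem:thm:main;half-line;temp_trace}, extend the reduced data by zero in time, solve on the full line with Theorem~\ref{DSOP:thm:main;real_line}, and restrict back --- is essentially the paper's proof, and most individual steps (the compatibility identity $\tr_{t=0}\tr_{\partial\dom}u=\tr_{\partial\dom}\tr_{t=0}u$ via density, boundedness of $E_{0}$ on ${_{0}}\mathbb{B}^{q,p,r}_{\mu,\gamma,s}(\R_{+})$, uniqueness via zero-extension and injectivity of the full-line map, the exponential rescaling giving $\lambda_{0}=-\infty$ for finite $T$) match the paper; the only structural difference is that the paper additionally reduces to $f=0$ via $L_{q,\mu}(\R_{+})$-maximal regularity of $\lambda_{0}+A$ before extending, whereas you extend $\tilde f$ by zero as well, which is harmless.

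There is, however, one genuine gap, at exactly the step you yourself flag as delicate: the claim that $\bar{u}=\mathscr{S}_{\R}(E_{0}\tilde f,E_{0}\tilde g)$ vanishes on $(-\infty,0)$. Your justification is that the inverse in Theorem~\ref{DSOP:thm:main;real_line} is ``built from convolution against the forward-in-time semigroup together with the (causal) parabolic boundary-to-solution operator.'' That is not how the inverse is built: the right inverse constructed there is $\mathscr{S}(\lambda)(f,g)=\mathscr{T}(\lambda)f+\ext_{\partial\dom}g-\mathscr{T}(\lambda)(\lambda+\mathcal{A}(\,\cdot\,,D))\ext_{\partial\dom}g$, where $\ext_{\partial\dom}$ is the coretraction of the spatial trace from Theorem~\ref{DSOP:thm:trace_MR}, obtained from the anisotropic extension operators of \cite{Lindemulder2017_max-reg}; this operator smooths in both time directions and is not causal, so the composite is not manifestly supported in $\{t\geq 0\}$ when the data are, and asserting causality of ``the parabolic boundary-to-solution operator'' is circular, since that is precisely what must be proved. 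What closes this step --- and what the paper does, following \cite[Theorem~7.16]{LV2018_Dir_Laplace} --- is a backward-uniqueness argument via exponential stability: on $(-\infty,0)$ the function $\bar u$ solves $\bar u'+(\lambda+A)\bar u=0$ with homogeneous Dirichlet condition, hence $\bar u(t)=e^{-(t-\sigma)(\lambda+A)}\bar u(\sigma)$ for $\sigma<t<0$; since $-(\lambda_{0}+A)$ generates an exponentially stable analytic $C_{0}$-semigroup (here $0\in\rho(\lambda_{0}+A)$ and $\omega_{H^{\infty}}(\lambda_{0}+A)<\frac{\pi}{2}$ enter) and $\bar u$ is $q$-integrable near $-\infty$, letting $\sigma\to-\infty$ forces $\bar u\equiv 0$ on $(-\infty,0]$, whence $\bar u(0)=0$. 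Without an argument of this kind the recovery of $u(0)=u_{0}$ from the restricted full-line solution is unjustified.
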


\begin{remark}\label{DSOP:rmk:thm:main;half-line;lambda0}
Remark~\ref{DSOP:rmk:thm:main;real_line;lambda0} for Theorem~\ref{DSOP:thm:main;real_line} also applies to Theorem~\ref{DSOP:thm:main;half-line}.
\end{remark}

\begin{remark}\label{DSOP:rmk:thm:main;half-line;finite_interval}
In case of a finite interval $J=(0,T)$ (so $T<\infty$) in Theorem \ref{DSOP:thm:main;half-line}, we can take $\lambda_{0}=-\infty$. This can be seen by the standard trick of considering $u_{\mu}(t):=e^{\mu t}u(t)$ instead of $u$ for suitable $\mu$.
\end{remark}

In the proof of the theorem we will use the following notation:
\[
{_{0}}\mathbb{B}^{q,p,r}_{\mu,\gamma,s}(I) := \left\{\begin{array}{ll}
\mathbb{B}^{q,p,r}_{\mu,\gamma,s}(I), & 1+\frac{s}{2}-\frac{1+\mu}{q} < \frac{1}{2}\frac{1+\gamma}{p}, \\
\{g \in \mathbb{B}^{q,p,r}_{\mu,\gamma,s}(I): g(0)=0\},& 1+\frac{s}{2}-\frac{1+\mu}{q} > \frac{1}{2}\frac{1+\gamma}{p},
\end{array}\right.
\]
and ${_{0}}\M^{q,p,r}_{\mu,\gamma,s}(I) := \{u \in \M^{q,p,r}_{\mu,\gamma,s}(I): u(0)=0\}$, where $I \in \{\R_{+},\R\}$. We will furthermore use the following lemma.

\begin{lemma}\label{lemma:thm:max-reg_bdd_domain_bd-data;interval;ext0}
Let the notation and assumptions be as in Theorem~\ref{DSOP:thm:main;half-line}.
Then operator $E_{0}$ of extension by zero from $\R_{+}$ to $\R$ is a bounded linear operator
from ${_{0}}\mathbb{B}^{q,p}_{\mu,\gamma}(\R_{+})$ to $\mathbb{B}^{q,p}_{\mu,\gamma}(\R)$.
\end{lemma}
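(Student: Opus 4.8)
The plan is to view $\mathbb{B}^{q,p,r}_{\mu,\gamma,s}$ as the intersection of its two constituent spaces and to show that $E_{0}$ acts boundedly on each, the whole difficulty being concentrated in the temporal Triebel--Lizorkin factor. Writing $\delta := 1+\frac{s}{2}-\frac{1}{2}\frac{1+\gamma}{p}$, the hypothesis $s \in (\frac{1+\gamma}{p}-2,\frac{1+\gamma}{p})$ forces $\delta \in (0,1)$, while $\mu \in (-1,q-1)$ forces the critical value $\frac{1+\mu}{q} \in (0,1)$; the standing assumption of Theorem~\ref{DSOP:thm:main;half-line} is precisely $\delta \neq \frac{1+\mu}{q}$, and one checks $2\delta = 2+s-\frac{1+\gamma}{p}$, so that $\mathbb{B}^{q,p,r}_{\mu,\gamma,s}(I) = F^{\delta}_{q,p}(I,v_{\mu};L_{p}(\partial\dom)) \cap L_{q}(I,v_{\mu};B^{2\delta}_{p,p}(\partial\dom))$. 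For the second factor extension by zero is trivial: since $E_{0}g$ is supported in $\overline{\R_{+}}$, one has $\norm{E_{0}g}_{L_{q}(\R,v_{\mu};B^{2\delta}_{p,p})} = \norm{g}_{L_{q}(\R_{+},v_{\mu};B^{2\delta}_{p,p})}$, so $E_{0}$ is isometric there. Hence it remains to prove that $E_{0}$ is bounded from ${_{0}}F^{\delta}_{q,p}(\R_{+},v_{\mu};L_{p}(\partial\dom))$ into $F^{\delta}_{q,p}(\R,v_{\mu};L_{p}(\partial\dom))$, where the prefix ${_{0}}$ carries the vanishing-trace restriction $g(0)=0$ precisely when $\delta > \frac{1+\mu}{q}$.

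For the subcritical regime $0 < \delta < \frac{1+\mu}{q}$ (so no trace condition is present) I would factor $E_{0}$ as pointwise multiplication by the Heaviside indicator composed with an extension operator. Fixing a bounded extension operator $\mathcal{E}: F^{\delta}_{q,p}(\R_{+},v_{\mu};L_{p}(\partial\dom)) \to F^{\delta}_{q,p}(\R,v_{\mu};L_{p}(\partial\dom))$ (of Rychkov/Seeley type, available in this weighted vector-valued scale), one writes $E_{0} = M_{\mathbf{1}_{\R_{+}}} \circ \mathcal{E}$, and boundedness follows from the fact that multiplication by $\mathbf{1}_{\R_{+}}$ is bounded on $F^{\delta}_{q,p}(\R,v_{\mu};L_{p}(\partial\dom))$ whenever $-\big(1-\tfrac{1+\mu}{q}\big) < \delta < \tfrac{1+\mu}{q}$; this is a pointwise-multiplier theorem for power-weighted vector-valued Triebel--Lizorkin spaces of the type established in \cite{Meyries&Veraar2015_pointwise_multiplication}. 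Our range $0 < \delta < \frac{1+\mu}{q}$ sits inside this admissible interval (note $\frac{1+\mu}{q}-1 < 0 < \delta$), so the estimate applies.

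For the supercritical regime $\frac{1+\mu}{q} < \delta < 1$ the multiplier $M_{\mathbf{1}_{\R_{+}}}$ is no longer bounded on the full space, and the vanishing-trace condition $g(0)=0$ must be used. Here I would argue by density, approximating $g \in {_{0}}F^{\delta}_{q,p}(\R_{+},v_{\mu};L_{p}(\partial\dom))$ by functions that are smooth and vanish near $t=0$ (for which $E_{0}g$ is unambiguous), and closing the estimate by a Hardy-type inequality valid for $\frac{1+\mu}{q} < \delta < 1+\frac{1+\mu}{q}$; alternatively one invokes the extension-by-zero statement for weighted vector-valued spaces with vanishing traces directly from \cite{Meyries&Veraar2014_traces} (see also \cite[Section~3]{Pruess&Simonett2016_book} and \cite{Lindemulder2017_max-reg}). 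I expect this supercritical case to be the main obstacle: the Triebel--Lizorkin (rather than Besov) structure places the $\ell_{p}$-sum \emph{inside} the $L_{q}$-norm, so the Hardy inequality must be applied at the level of the mixed norm $L_{q}(\R,v_{\mu})[\ell_{p}]$ rather than term by term, and the sharp admissible range for $\delta$ must be matched against the weight exponent $\mu$ so that the boundary contribution produced by extension is absorbed exactly by the hypothesis $g(0)=0$.
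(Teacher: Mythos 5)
Your proposal is correct and follows essentially the same route as the paper: the paper's entire proof is the remark that the lemma ``can be proved in the same way as \cite[Lemma~7.17]{LV2018_Dir_Laplace}'', and that argument is precisely your decomposition --- extension by zero is isometric on the $L_{q}(B^{2\delta}_{p,p})$ factor, while on the temporal $F^{\delta}_{q,p}$ factor one uses pointwise multiplication by $\mathbf{1}_{\R_{+}}$ (via \cite{Meyries&Veraar2015_pointwise_multiplication}) in the subcritical case $\delta<\frac{1+\mu}{q}$ and the Hardy-inequality/vanishing-trace mechanism in the supercritical case $\delta>\frac{1+\mu}{q}$. Your supercritical case is left at the level of a sketch (density alone does not yield the uniform estimate; one genuinely needs the Hardy inequality $\norm{t^{-\delta}g}_{L_{q}(v_{\mu})}\lesssim\norm{g}_{F^{\delta}_{q,p}}$ for $g(0)=0$), but the paper provides no more detail either, deferring entirely to the cited lemma, which carries out exactly this step.
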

\begin{proof}
This can be proved in the same way as \cite[Lemma~7.17]{LV2018_Dir_Laplace}.
\end{proof}

\begin{proof}[Proof of Theorem~\ref{DSOP:thm:main;half-line}]
That $u \mapsto (u'+(\lambda+\mathcal{A}(\,\cdot\,,D))u,\tr_{\partial\dom}u,u(0))$ is a bounded operator
\[
\M^{q,p,r}_{\mu,\gamma,s}(J) \longra \mathbb{D}^{q,p,r}_{\mu,\gamma,s}(J) \oplus \mathbb{B}^{q,p,r}_{\mu,\gamma,s}(J) \oplus \mathbb{I}^{q,p}_{\mu,\gamma,s}
\]
follows from a combination of the smoothness condition $(\mathrm{S})^{s}_{p,r,\gamma}$, Theorem~\ref{DSOP:thm:trace_MR} and Lemma~\ref{DSOP:lem:thm:main;half-line;temp_trace}. That it maps to $\mathbb{D}^{q,p,r}_{\mu,\gamma,s}(J) \oplus \mathbb{IB}^{q,p}_{\mu,\gamma,s}(J)$ can be seen as in \cite[Theorem~7.16]{LV2018_Dir_Laplace} follows: we only need to show that
\begin{align}\label{eq:cor:max-reg_bdd_domain_bd-data;interval;comp_cond}
\tr_{t=0}\tr_{\partial\dom}u = \tr_{\partial\dom}\tr_{t=0}u, \qquad u \in \M^{q,p,r}_{\mu,\gamma,s}(J),
\end{align}
when $1+\frac{s}{2}-\frac{1+\mu}{q} > \frac{1}{2}\frac{1+\gamma}{p}$, which simply follows from
\[
W^{1}_{q,\mu}(J;F^{s+2}_{p,q,\gamma}(\dom)) \stackrel{d}{\hookrightarrow} \M^{q,p,r}_{\mu,\gamma,s}(J).
\]
Here this density follows from a standard convolution argument (in the time variable).

Let $\lambda_{0}$ and $A$ be as in the prof of Theorem~\ref{DSOP:thm:main;real_line}.
Then, as $0 \in (\lambda_{0}+A)$ and $\lambda_{0}+A$ has a bounded $H^{\infty}$-calculus with angle $\omega_{H^{\infty}}(\lambda_{0}+A) < \frac{\pi}{2}$, $-(\lambda_{0}+A)$ is the generator of an exponentially stable $C_{0}$-semigroup (see \cite{EN}). We can now proceed as in \cite[Theorem~7.16]{LV2018_Dir_Laplace}.

Injectivity of $u \mapsto (u'+(\lambda+\mathcal{A}(\,\cdot\,,D))u,\tr_{\partial\dom}u,u(0))$ follows the fact that $-(\lambda_{0}+A)$ is the generator of a $C_{0}$-semigroup.
So it remains to be shown that it has a bounded right-inverse, i.e.\ there is a bounded solution operator to the associated parabolic initial-boundary value problem. Using Lemma~\ref{DSOP:lem:thm:main;half-line;temp_trace} followed by Theorem~\ref{DSOP:thm:trace_MR} and \eqref{eq:cor:max-reg_bdd_domain_bd-data;interval;comp_cond}, we may restrict ourselves to the case $u_{0}=0$. Furthermore, by $L_{q,\mu}$-maximal regularity of $\lambda_{0}+A$ (obtained through the $H^{\infty}$ calculus and the UMD property of $F^{s}_{p,r,\gamma}(\mathscr{O})$, see Section~\ref{DSOP:subsec:prelim:UMD_max-reg}) we may restrict ourselves to the case $f=0$.
By extension and restriction it is enough to treat the resulting problem for $J=\R_{+}$.
We must show that there is a bounded linear solution operator $\mathscr{S}:{_{0}}\mathbb{B}^{q,p,r}_{\mu,\gamma,s}(\R_{+}) \to
{_{0}}\M^{q,p,r}_{\mu,\gamma,s}(\R_{+}),\,g \mapsto u$ for the problem
\begin{equation}\label{eq:thm:max-reg_bdd_domain_bd-data;interval;1}
\left\{\begin{array}{rl}
u' + (\lambda+\mathcal{A}(\,\cdot\,,D)) u &= 0, \\
\tr_{\partial\dom}u &=g.
\end{array}\right.
\end{equation}

Let $E_{0} \in \mathcal{B}({_{0}}\mathbb{B}^{q,p}_{\mu,\gamma}(\R_{+}),\mathbb{B}^{q,p}_{\mu,\gamma}(\R))$ be the operator of extension by zero (see Lemma~\ref{lemma:thm:max-reg_bdd_domain_bd-data;interval;ext0}) and let $\mathscr{S}_{\R}:\mathbb{B}^{q,p}_{\mu,\gamma}(\R) \to
\M^{q,p}_{\mu,\gamma}(\R),\,g \mapsto u$ be the solution operator for the problem \eqref{eq:thm:max-reg_bdd_domain_bd-data;interval;1} on $\R$ from Theorem~\ref{DSOP:thm:main;real_line}.
It suffices to show that $\mathscr{S}_{\R} \circ E_{0}$ maps to ${_{0}}\mathbb{B}^{q,p}_{\mu,\gamma}(\R_{+}) $ to ${_{0}}\M^{q,p}_{\mu,\gamma}(\R)$; indeed, in that case $\mathscr{S}g:=(\mathscr{S}E_{0}g)_{|\R_{+}}$ is as desired. This can be done as in \cite[Theorem~7.16]{LV2018_Dir_Laplace}, using exponential stability of the $C_{0}$-semigroup generated by $-(\lambda_{0}+A)$.
\end{proof}

\subsection{Smoothing in the $L_{q,\mu}$-$F^{s}_{p,r,\gamma}$-maximal Regularity Approach}

We get the following two smoothing results as collaries to Theorems \ref{DSOP:thm:main;real_line} and \ref{DSOP:thm:main;half-line}, respectively, through the embedding \eqref{DSOP:eq:prelim:emb_F_into_Lp} and \eqref{DSOP:eq:prelim:Sob_embd}.

\begin{cor}\label{DSOP:cor:thm:main;real_line;smoothing}
Let $\dom$ be either $\R^{d}_{+}$ or a $C^{\infty}$-domain in $\R^{d}$ with a compact boundary $\partial\dom$.
Let $q,p,r \in (1, \infty)$, $v\in A_q(\R)$, $\gamma\in (-1,\infty)$ and $s \in (\frac{1+\gamma}{p}-2,\frac{1+\gamma}{p})$.
Suppose that $\mathcal{A}(\,\cdot\,,D) = \sum_{|\alpha| \leq 2}a_{\alpha}D^{\alpha}$ has bounded $C^{\infty}$-coefficients with $a_{\alpha}(\infty) := \lim_{|x| \to \infty}a_{\alpha}(x)$ for each $|\alpha|=2$ and that it satisfies the ellipticity condition $(\mathrm{E})$.
Then there exists $\lambda_{0} \in \R$ such that for all $\lambda \geq \lambda_{0}$,
\begin{align*}
&\left\{ u \in W^{1}_{q}(\R,v;F^{s}_{p,r,\gamma}(\dom))\cap L_{q}(\R,v;F^{s+2}_{p,r,\gamma}(\dom)) : \partial_{t}u+(\lambda+\mathcal{A}(\,\cdot\,,D))u = 0 \right\} \\
&\qquad \hookrightarrow
\quad \bigcap_{\mu > -1}\left[ W^{1}_{q}(\R,v;F^{s+\frac{\mu-\gamma}{p}}_{p,1,\mu}(\dom))\cap L_{q}(\R,v;F^{s+\frac{\mu-\gamma}{p}+2}_{p,1,\mu}(\dom)) \right] \\
&\qquad\qquad \hookrightarrow \quad \bigcap_{k \in \N}\left[ W^{1}_{q}(\R,v;W^{k}_{p}(\dom,w_{\gamma+(k-s)p}^{\partial\dom}))\cap L_{q}(\R,v;W^{k+2}_{p}(\dom,w_{\gamma+(k-s)p}^{\partial\dom})) \right].
\end{align*}
\end{cor}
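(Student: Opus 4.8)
The plan is to prove the two embeddings in turn, the second being a direct consequence of the first. Granting the first embedding, the second follows by restricting the intersection over $\mu > -1$ to the discrete values $\mu = \gamma+(k-s)p$, $k \in \N$: for these the smoothness index becomes $s+\frac{\mu-\gamma}{p} = k \in \N$, so that the elementary embedding \eqref{DSOP:eq:prelim:emb_F_into_Lp}, applied with smoothness $k$ and $k+2$ and weight $\nu = \gamma+(k-s)p$, turns $F^{k}_{p,1,\nu}(\dom)$ and $F^{k+2}_{p,1,\nu}(\dom)$ into $W^{k}_{p}(\dom,w^{\partial\dom}_{\nu})$ and $W^{k+2}_{p}(\dom,w^{\partial\dom}_{\nu})$. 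Here one only checks $\nu = \gamma+(k-s)p > -1$, which is immediate from $s < \frac{1+\gamma}{p}$.

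For the first embedding, I would fix $u$ in the left-hand side, i.e.\ a solution of $\partial_{t}u+(\lambda+\mathcal{A}(\,\cdot\,,D))u=0$ in $\M^{q,p,r}_{v,\gamma,s}(\R)$, and put $g := \tr_{\partial\dom}u \in \B^{q,p,r}_{v,\gamma,s}(\R)$ (Theorem~\ref{DSOP:thm:trace_MR}). The guiding observation is that $\B^{q,p,r}_{v,\gamma,s}(\R)$ depends on $(\gamma,s)$ only through $\delta = 1+\frac{s}{2}-\frac{1}{2}\frac{1+\gamma}{p}$, and that $\delta$ is invariant under $(\gamma,s)\mapsto(\mu',s'')$ whenever $s'' = s+\frac{\mu'-\gamma}{p}$. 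Fixing $\mu > -1$, I would choose an auxiliary weight $\mu' \in (-1,\mu)$ with $\mu' \leq \gamma$ (possible for every $\mu>-1$ since $\gamma>-1$), and set $s'' := s+\frac{\mu'-\gamma}{p}$. One checks $s'' \in (\frac{1+\mu'}{p}-2,\frac{1+\mu'}{p})$ from $s''-\frac{1+\mu'}{p} = s-\frac{1+\gamma}{p} \in (-2,0)$, and by the $\delta$-invariance $\B^{q,p,r}_{v,\mu',s''}(\R) = \B^{q,p,r}_{v,\gamma,s}(\R)$, so $g$ lies in the former. Theorem~\ref{DSOP:thm:main;real_line} applied at the parameters $(\mu',s'')$ then produces a unique $\tilde{u} \in \M^{q,p,r}_{v,\mu',s''}(\R)$ solving the same homogeneous problem with $\tr_{\partial\dom}\tilde{u} = g$.

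The decisive step is to identify $\tilde{u}$ with $u$. Since $\mu' \leq \gamma$ and $s'' = s+\frac{\mu'-\gamma}{p}$, the sharp Sobolev embedding \eqref{DSOP:eq:prelim:Sob_embd} gives $\M^{q,p,r}_{v,\mu',s''}(\R) \hookrightarrow \M^{q,p,r}_{v,\gamma,s}(\R)$, whence $\tilde{u} \in \M^{q,p,r}_{v,\gamma,s}(\R)$. Both $u$ and $\tilde{u}$ now lie in $\M^{q,p,r}_{v,\gamma,s}(\R)$, solve the homogeneous equation, and have trace $g$; as the map of Theorem~\ref{DSOP:thm:main;real_line} at $(\gamma,s)$ is an isomorphism and hence injective, $\tilde{u}=u$, so $u \in \M^{q,p,r}_{v,\mu',s''}(\R)$. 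A second application of \eqref{DSOP:eq:prelim:Sob_embd}, now from $(\mu',s'')$ up to $(\mu,s')$ with $s' := s+\frac{\mu-\gamma}{p}$ and target microscopic index $1$ — admissible because $\mu > \mu'$ strictly and $s'' = s'+\frac{\mu'-\mu}{p}$, the strict increase of the weight being exactly what permits lowering the microscopic index to $1$ — yields $u \in W^{1}_{q}(\R,v;F^{s'}_{p,1,\mu}(\dom)) \cap L_{q}(\R,v;F^{s'+2}_{p,1,\mu}(\dom))$, which is the first embedding.

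The step I expect to be the main obstacle is twofold. First, the identification $\tilde{u}=u$ hinges on comparing two a priori distinct maximal-regularity solutions in one common space; the mechanism above routes this through the comparison embedding into $\M^{q,p,r}_{v,\gamma,s}(\R)$ and the injectivity from Theorem~\ref{DSOP:thm:main;real_line}. Second, and more delicate, is the uniformity of the threshold $\lambda_{0}$: the intersection runs over all $\mu>-1$, hence over a continuum of auxiliary parameters $(\mu',s'')$ with $\mu' \in (-1,\gamma]$ and $s'' \in (s-\frac{1+\gamma}{p},s]$, and I need a single $\lambda_{0}$ for which Theorem~\ref{DSOP:thm:main;real_line} is applicable at every such pair. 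Because the coefficients are $C^{\infty}$, the smoothness condition $(\mathrm{S})^{s''}_{p,r,\mu'}$ holds throughout this range, and Remark~\ref{DSOP:rmk:thm:main;real_line;lambda0} (resting on \cite{Lindemulder2018_elliptic_weighted_B&F}) supplies a $\lambda_{0}$ independent of the weight and smoothness parameters, which I would use to fix $\lambda_{0}$ once and for all.
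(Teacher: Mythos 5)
Your proof is correct and follows essentially the same route as the paper: exploit the invariance of the boundary-data space $\B^{q,p,r}_{v,\cdot,\cdot}(\R)$ and of $\lambda_{0}$ under the shift $(\gamma,s)\mapsto(\mu,s+\frac{\mu-\gamma}{p})$, apply Theorem~\ref{DSOP:thm:main;real_line} at the shifted parameters, and conclude with the Sobolev embedding \eqref{DSOP:eq:prelim:Sob_embd} (with microscopic improvement) and the elementary embedding \eqref{DSOP:eq:prelim:emb_F_into_Lp}. Your explicit identification of $\tilde{u}$ with $u$ via an auxiliary weight $\mu'\leq\gamma$ and the injectivity from Theorem~\ref{DSOP:thm:main;real_line} spells out a step the paper leaves implicit, but it is the same argument.
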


\begin{cor}\label{DSOP:cor:thm:main;half-line;smoothing}
Let $\dom$ be either $\R^{d}_{+}$ or a $C^{\infty}$-domain in $\R^{d}$ with a compact boundary $\partial\dom$ and let $J=(0,T)$ with $T \in (0,\infty]$.
Let $q,p,r \in (1, \infty)$, $\mu \in (-1,q-1)$, $\gamma\in (-1,\infty)$ and $s \in (\frac{1+\gamma}{p}-2,\frac{1+\gamma}{p})$ be such that $1+\frac{s}{2}-\frac{1+\mu}{q} \neq \frac{1}{2}\frac{1+\gamma}{p}$.
Suppose that $\mathcal{A}(\,\cdot\,,D) = \sum_{|\alpha| \leq 2}a_{\alpha}D^{\alpha}$ has bounded $C^{\infty}$-coefficients with $a_{\alpha}(\infty) := \lim_{|x| \to \infty}a_{\alpha}(x)$ for each $|\alpha|=2$ and that it satisfies the ellipticity condition $(\mathrm{E})$.
Then there exists $\lambda_{0} \in [-\infty,\infty)$ such that for all $\lambda \in [\lambda_{0},\infty)$,
\begin{align*}
&\left\{ u \in W^{1}_{q,\mu}(J;F^{s}_{p,r,\gamma}(\dom))\cap L_{q,\mu}(J;F^{s+2}_{p,r,\gamma}(\dom)) : \partial_{t}u+(\lambda+\mathcal{A}(\,\cdot\,,D))u = 0, u(0)=0 \right\} \\
&\qquad \hookrightarrow
\quad \bigcap_{\mu > -1}\left[ W^{1}_{q,\mu}(J;F^{s+\frac{\mu-\gamma}{p}}_{p,1,\mu}(\dom))\cap L_{q,\mu}(J;F^{s+\frac{\mu-\gamma}{p}+2}_{p,1,\mu}(\dom)) \right] \\
&\qquad\qquad \hookrightarrow \quad \bigcap_{k \in \N}\left[ W^{1}_{q,\mu}(J;W^{k}_{p}(\dom,w_{\gamma+(k-s)p}^{\partial\dom}))\cap L_{\mu}(J;W^{k+2}_{p}(\dom,w_{\gamma+(k-s)p}^{\partial\dom})) \right].
\end{align*}
\end{cor}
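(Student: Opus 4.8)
The plan is to run the argument of Corollary~\ref{DSOP:cor:thm:main;real_line;smoothing} for the line, with Theorem~\ref{DSOP:thm:main;half-line} replacing Theorem~\ref{DSOP:thm:main;real_line} and the initial condition carried along. Write $S$ for the left-hand solution set, a subspace of $\M^{q,p,r}_{\mu,\gamma,s}(J)$. Specializing Theorem~\ref{DSOP:thm:main;half-line} to $f=0$, $u_{0}=0$, the boundary trace $\mathrm{tr}_{\partial\dom}$ is an isomorphism of $S$ onto ${_{0}}\B^{q,p,r}_{\mu,\gamma,s}(J)$, with the associated solution operator as inverse. It suffices to establish the two displayed chains on $S$; the second chain (into integer-order Sobolev spaces) will reduce to the elementary embedding \eqref{DSOP:eq:prelim:emb_F_into_Lp}, so the heart of the matter is the first chain into the $F_{p,1}$-scale. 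Throughout I use $\nu$ for the intersection variable, to avoid the clash with the temporal weight $\mu$.

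The decisive point, and the reason one speaks of a self-improvement, is the invariance of the boundary-data space. Fix $\nu>-1$ and set $s':=s+\tfrac{\nu-\gamma}{p}$, so that $(s,\gamma)$ and $(s',\nu)$ lie on the scaling line underlying \eqref{DSOP:eq:prelim:Sob_embd}. A direct computation gives $1+\tfrac{s'}{2}-\tfrac{1+\nu}{2p}=1+\tfrac{s}{2}-\tfrac{1+\gamma}{2p}$ and $2+s'-\tfrac{1+\nu}{p}=2+s-\tfrac{1+\gamma}{p}$. Since $\B^{q,p,r}_{\mu,\gamma,s}(J)$ does not involve $r$ and depends on $(s,\gamma)$ only through these two exponents, we get $\B^{q,p,r}_{\mu,\gamma,s}(J)=\B^{q,p,1}_{\mu,\nu,s'}(J)$. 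The same computation shows $s'\in(\tfrac{1+\nu}{p}-2,\tfrac{1+\nu}{p})$, shows that the non-criticality condition $1+\tfrac{s}{2}-\tfrac{1+\mu}{q}\neq\tfrac{1}{2}\tfrac{1+\gamma}{p}$ is preserved, and shows that the compatibility threshold defining ${_{0}}\B$ is unchanged; hence Theorem~\ref{DSOP:thm:main;half-line} is applicable with the parameters $(1,\nu,s')$ and ${_{0}}\B^{q,p,r}_{\mu,\gamma,s}(J)={_{0}}\B^{q,p,1}_{\mu,\nu,s'}(J)$.

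Given $u\in S$, set $g:=\mathrm{tr}_{\partial\dom}u$, which by the invariance also lies in ${_{0}}\B^{q,p,1}_{\mu,\nu,s'}(J)$. Theorem~\ref{DSOP:thm:main;half-line} with parameters $(1,\nu,s')$ produces a solution $\tilde u\in\M^{q,p,1}_{\mu,\nu,s'}(J)$ with $\mathrm{tr}_{\partial\dom}\tilde u=g$ and $\tilde u(0)=0$. The crux is the identification $u=\tilde u$, that is, a uniqueness statement for the parabolic problem independent of the scale in which the solution is sought. I would obtain it by transporting both $u$ and $\tilde u$, through the Sobolev embedding \eqref{DSOP:eq:prelim:Sob_embd} along the line on which $s-\tfrac{\gamma}{p}$ is constant (which lowers weight and smoothness simultaneously, applied pointwise in time and hence lifted through the spaces $W^{1}_{q,\mu}(J;\,\cdot\,)$ and $L_{q,\mu}(J;\,\cdot\,)$), into the maximal regularity space with the smaller of the two spatial weights. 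There both are solutions of the same problem with boundary datum $g$ and zero initial value, so the injectivity half of the isomorphism in Theorem~\ref{DSOP:thm:main;half-line} forces $u=\tilde u$. Consequently $u\in\M^{q,p,1}_{\mu,\nu,s'}(J)$ with $\|u\|_{\M^{q,p,1}_{\mu,\nu,s'}(J)}\lesssim\|g\|_{{_{0}}\B}\lesssim\|u\|_{\M^{q,p,r}_{\mu,\gamma,s}(J)}$ for each fixed $\nu$; intersecting over $\nu>-1$ yields the first chain.

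For the second chain, restrict to $\nu=\gamma+(k-s)p$ with $k\in\N$ (all such $\nu$ exceed $-1$, since $s<\tfrac{1+\gamma}{p}$ gives $\gamma-sp>-1$), for which $s'=k$; then apply the elementary embedding \eqref{DSOP:eq:prelim:emb_F_into_Lp}, namely $F^{k}_{p,1,\nu}(\dom)\hookrightarrow W^{k}_{p}(\dom,w^{\partial\dom}_{\nu})$ and $F^{k+2}_{p,1,\nu}(\dom)\hookrightarrow W^{k+2}_{p}(\dom,w^{\partial\dom}_{\nu})$, again lifted through $W^{1}_{q,\mu}(J;\,\cdot\,)$ and $L_{q,\mu}(J;\,\cdot\,)$. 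Beyond Corollary~\ref{DSOP:cor:thm:main;real_line;smoothing} on the line, the only half-line-specific points are the preservation of the initial condition $u(0)=0$ (which survives every embedding, since the temporal trace commutes with the spatial operations) and of the compatibility threshold, both settled above. I expect the main obstacle to be precisely the identification $u=\tilde u$: making rigorous that the solution furnished by Theorem~\ref{DSOP:thm:main;half-line} is independent of the function-space scale, which is where the scaling-line Sobolev embedding and the uniqueness half of the maximal regularity isomorphism must be combined.
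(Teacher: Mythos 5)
Your proposal follows essentially the same route as the paper: the proof given there for Corollaries \ref{DSOP:cor:thm:main;real_line;smoothing} and \ref{DSOP:cor:thm:main;half-line;smoothing} consists precisely of the invariance of $\B^{q,p,r}_{\mu,\gamma,s}$ (and of all the side conditions) along the scaling line $s_{\nu}=s+\frac{\nu-\gamma}{p}$, the resulting comparison of the solution sets in the two scales via the maximal regularity isomorphism, the microscopic improvement in \eqref{DSOP:eq:prelim:Sob_embd}, and finally \eqref{DSOP:eq:prelim:emb_F_into_Lp} for the second chain. You in fact treat one point more carefully than the paper does: the paper passes from the two isomorphisms onto the common boundary-data space to the inclusion of solution sets with the words ``as a consequence'', whereas you make explicit that this requires identifying $u$ with the solution $\tilde u$ produced in the new scale, and you supply the right mechanism (push both into the scale with the larger weight exponent via \eqref{DSOP:eq:prelim:Sob_embd}, where they solve the same problem, and invoke the uniqueness half of Theorem~\ref{DSOP:thm:main;half-line} there). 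Your verifications that the scaling line preserves the admissible range of $s$, the non-criticality condition and the compatibility threshold are all correct.

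The one ingredient of the paper's proof that is missing from yours is the uniformity of $\lambda_{0}$ in the intersection parameter $\nu$. Applying Theorem~\ref{DSOP:thm:main;half-line} in the scale $(\nu,s_{\nu})$ a priori produces a threshold $\lambda_{0}(\nu)$, and since the conclusion asserts an intersection over all $\nu>-1$ for a single fixed $\lambda$, you need $\sup_{\nu>-1}\lambda_{0}(\nu)<\infty$. The paper secures this by noting that the smoothness condition $(\mathrm{S})^{s_{\nu}}_{p,1,\nu}$ holds for every $\nu$ (the coefficients being bounded and $C^{\infty}$) and then invoking the parameter-independence of $\lambda_{0}$ recorded in Remark~\ref{DSOP:rmk:thm:main;real_line;lambda0} (carried over in Remark~\ref{DSOP:rmk:thm:main;half-line;lambda0}). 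This is easily added, but without it the quantifiers in the statement are not justified.
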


\begin{remark}\label{DSOP:rmk:cor:thm:main;half-line;smoothing}\
\begin{itemize}
\item Remark~\ref{DSOP:rmk:thm:main;real_line;lambda0} for Theorem~\ref{DSOP:thm:main;real_line} also applies to Corollaries \ref{DSOP:cor:thm:main;real_line;smoothing} and \ref{DSOP:cor:thm:main;half-line;smoothing}.
\item As in Remark~\ref{DSOP:rmk:thm:main;half-line;finite_interval},
in case of a finite interval $J=(0,T)$ (so $T<\infty$), we can take $\lambda_{0}=-\infty$ in Corollary~\ref{DSOP:cor:thm:main;half-line;smoothing}.
\end{itemize}
\end{remark}

\begin{proof}[Proof of Corollaries \ref{DSOP:cor:thm:main;real_line;smoothing} and \ref{DSOP:cor:thm:main;half-line;smoothing}]
Corollaries \ref{DSOP:cor:thm:main;real_line;smoothing} and \ref{DSOP:cor:thm:main;half-line;smoothing} can be derived from Theorems \ref{DSOP:thm:main;real_line} and \ref{DSOP:thm:main;half-line}, respectively, through the embedding \eqref{DSOP:eq:prelim:emb_F_into_Lp} and \eqref{DSOP:eq:prelim:Sob_embd} in the same way. Let us only write down the argument for Corollary~\ref{DSOP:cor:thm:main;real_line;smoothing}.

Note that the smoothness condition $(\mathrm{S})^{s+\frac{\mu-\gamma}{p}}_{p,r,\mu}$ is satisfied for each $\mu \in (-1,\infty)$. So by the independence of $\lambda_{0}$ mentioned in Remark~\ref{DSOP:rmk:thm:main;real_line;lambda0}, we can choose $\lambda_{0} \in \R$ in Theorem~ \ref{DSOP:thm:main;real_line} that works simultaneously for all smoothness parameters $s_{\mu}:= s+\frac{\mu-\gamma}{p}$ and weight parameters $\mu \in (-1,\infty)$.
So pick such a $\lambda_{0}$ and let $\lambda \geq \lambda_{0}$.
Then
\begin{equation*}
\left\{u \in \M^{q,p,r}_{v,\mu,s_{\mu}}(\R):u'+(\lambda+\mathcal{A}(\,\cdot\,,D))u=0 \right\} \longra \B^{q,p,r}_{v,\mu,s_{\mu}}(\R),\,u \mapsto \tr_{\partial\dom}u
\end{equation*}
defines an isomorphism of Banach spaces, while
\[
\B^{q,p,r}_{v,\mu,s_{\mu}}(\R) = \B^{q,p,r}_{v,\gamma,s}(\R) \qquad \text{as} \qquad 1+\frac{s}{2}-\frac{1}{2}\frac{1+\gamma}{p} = 1+\frac{s_{\mu}}{2}-\frac{1}{2}\frac{1+\mu}{p}.
\]
As a consequence,
\[
\left\{u \in \M^{q,p,r}_{v,\gamma,s}(\R):u'+(\lambda+\mathcal{A}(\,\cdot\,,D))u=0 \right\}
\hookrightarrow \left\{u \in \M^{q,p,r}_{v,\mu,s_{\mu}}(\R):u'+(\lambda+\mathcal{A}(\,\cdot\,,D))u=0 \right\}
\]
for all $\mu \in (-1,\infty)$. Combining this with the microscopic improvement in the Sobolev embedding \eqref{DSOP:eq:prelim:Sob_embd}, we obtain the obtain the first inclusion to be proved.
The second inclusion follows immediately from the elementary embedding \eqref{DSOP:eq:prelim:emb_F_into_Lp}.
\end{proof}

\def\cprime{$'$} \def\cprime{$'$} \def\cprime{$'$}

\end{document}